\def\etc{{\it etc}}
\def\LaTeX{L\kern-.36em\raise.3ex\hbox{a}\kern-.15em
    T\kern-.1667em\lower.7ex\hbox{E}\kern-.125emX}
\newtheorem{definition}{Definition}[section]
  \def\th@definition{
  \thm@headfont{\bfseries} 
  \thm@notefont{\mdseries\itshape} 
  \upshape
}
\theoremstyle{definition}
\newtheorem{example}{Example}[section]
\newtheorem{theorem}{Theorem}[section]
\newtheorem{proposition}{Proposition}[section]
\newtheorem{fact}{Fact}[section]
\newtheorem{lemma}{Lemma}[section]
\newtheorem{corollary}{Corollary}[section]
\newtheorem{notation}{Notation}[section]
\newcommand{\ol}[1]{\overline{#1}}
\newcommand{\wt}[1]{\widetilde{#1}}
\newcommand{\naturalto}{%
  \mathrel{\vbox{\offinterlineskip
    \mathsurround=0pt
    \ialign{\hfil##\hfil\cr
      \normalfont\scalebox{1.2}{.}\cr
      $\longrightarrow$\cr}
  }}%
}
\setlist[enumerate, 1]{
  leftmargin=\parindent,
  parsep=.5\parsep,
  itemsep=0ex
}
\setlist[itemize, 1]{
  leftmargin=\parindent,
  parsep=.5\parsep,
  itemsep=0ex
}
\def\Nm{\mathit{Nm}}
\def\sem#1{[\![ #1 ] \!]}
\newcommand{\co}{\,\colon\;}
\newcommand{\ra}{\rightarrow}
\newcommand{\Ra}{\Rightarrow}
\newcommand{\op}{\mathrm{op}}
\newcommand{\impl}{\Rightarrow}
\newcommand{\snvsp}{\vspace{-1ex}}
\newcommand{\Nom}{\mathrm{Nom}}
\def\sen#1{#1\mbox{-}\mathrm{sen}}
\newcommand{\Sign}{\mathit{Sign}}
\newcommand{\Sen}{\mathit{Sen}}
\newcommand{\Mod}{\mathit{Mod}}
\def\B{\mathcal{B}}
\def\Set{\mathbf{Set}}
\def\CAT{\mathbf{C\!A\!T}}
\def\INS{\mathbf{IN\!S}}
\def\SINS{\mathbf{S\!IN\!S}}
\def\I{\mathcal{I}}
\def\SI{\mathcal{S}}
\def\FOL{{F\!O\!L}}
\def\PL{{P\!L}}
\def\OFOL{{O\!F\!O\!L}}
\def\MOFOL{{M\!O\!F\!O\!L}}
\def\HOFOL{{H\!O\!F\!O\!L}}
\def\HMOFOL{{H\!M\!O\!F\!O\!L}}
\def\MPL{{M\!P\!L}}
\def\MFOL{{M\!F\!O\!L}}
\def\HPL{{H\!P\!L}}
\def\HHPL{{H\!H\!P\!L}}
\def\HHHPL{{H\!H\!H\!P\!L}}
\def\HFOL{{H\!F\!O\!L}}
\def\HHFOL{{H\!H\!F\!O\!L}}
\def\MMPL{{M\!M\!P\!L}}
\def\MMFOL{{M\!M\!F\!O\!L}}
\def\MHPL{{M\!H\!P\!L}}
\def\MHFOL{{M\!H\!F\!O\!L}}
\def\REL{{R\!E\!L}}
\def\RELC{{R\!E\!L\!C}}
\def\BREL{{B\!R\!E\!L}}
\def\BRELC{{B\!R\!E\!L\!C}}
\def\SETC{{S\!E\!T\!C}}
\def\SET{\mathit{SET}}
\def\opp{\mathrm{op}}
\DeclareMathAlphabet{\mathbb}{U}{msb}{m}{n}
\DeclareSymbolFont{ams}{U}{msa}{m}{n}
\DeclareSymbolFontAlphabet{\mathams}{ams}
\DeclareMathSymbol{\filter}{\mathams}{ams}{22}
\begin{document}

\title{Decompositions of Stratified Institutions} 
\author{R\u{a}zvan Diaconescu}
\address{Simion Stoilow Institute of Mathematics of the Romanian
  Academy}
\email{Razvan.Diaconescu@ymail.com}

\date{}


\begin{abstract}
The theory of \emph{stratified institutions} is a general axiomatic
approach to model theories where the satisfaction is parameterised by
states of the models.
In this paper we further develop this theory by introducing a new
technique for representing stratified institutions which is based on
projecting to such simpler structures.
On the one hand this can be used for developing general results
applicable to a wide variety of already existing model theories with
states, such as those based on some form of Kripke semantics.
On the other hand this may serve as a template for defining new such
model theories.
In this paper we emphasise on the former application of this technique
by developing general results on model amalgamation and on existence
diagrams for stratified institutions.
These are two most useful properties to have in institution theoretic
model theory. 
\end{abstract}

\maketitle

\section{Introduction}

\subsection{Stratified institutions}

Institution theory is a general axiomatic approach to model theory
that has been originally introduced in computing science by Goguen and
Burstall \cite{ins}. 
In institution theory all three components of logical systems --
namely the syntax, the semantics, and the satisfaction relation
between them -- are treated fully abstractly by relying heavily on
category theory.
This approach has impacted significantly both theoretical computing
science \cite{sannella-tarlecki-book} and model theory as such
\cite{iimt}.\footnote{Both mentioned monographs rather reflect the
  stage of development of institution theory and its applications at
  the moment they were published or even before that. In the meantime a
  lot of additional important developments have already taken place.
  At this moment the literature on institution theory and around, that
  has been developed over the course of four decades or so is rather
  vast.} 
In computing science the concept of institution has emerged as the
most fundamental mathematical structure of logic-based formal
specifications, a great deal of theory being developed at the general
level of abstract institutions.
In model theory the institution theoretic approach meant an axiomatic
driven redesign of core parts of model theory at a new level of
generality -- namely that of abstract institutions -- independently of
any concrete logical system.
Moreover, there is a strong interdependency between the two lines of
developments.

The institution theoretic approach to model theory has also been
refined in order to address directly some important non-classical
model theoretic aspects.
One such direction is motivated by \emph{models with states}, which
appear in myriad forms in computing science and logic.
A typical important class of examples is given by the Kripke semantics
(of modal logics) which itself comes in a wide variety of forms.
Moreover, the concept of model with states goes beyond Kripke
semantics, at least in its conventional acceptations.
The institution theory answer to this is given by the theory of
\emph{stratified institutions} introduced in \cite{modalins,strat}
and further developed or invoked in works such as
\cite{KripkeStrat,aiguier-bloch2019,gaina-acm}, etc. 

\subsection{New contributions}

Our work is part of a broader effort to further develop the theory of
stratified institutions in several directions that are important
either from a model theory as such or a computing science
perspective. 
Our developments consist of
\begin{itemize}

\item a new technique for representing stratified institutions, 

\item on the basis of the above mentioned technique, results
  establishing a couple of crucial model theoretic properties
  for stratified institutions, 

\end{itemize}
In what follows we discuss these in more detail. 

\subsubsection{Decompositions of stratified institutions}

Historically there have been two major approaches to Kripke semantics
within institution theory:
\begin{enumerate}

\item The approach introduced in \cite{ks} and then used in
  \cite{HybridIns,QVHybrid,EncHybrid,IntroducingH,gaina2015}, etc.,
  that considers Kripke semantics as a two-layered concept.
  A base layer consists of unspecific structures such as the
  interpretations of sorts (types), function, relation (predicate)
  symbols, etc.
  An upper layer consists of the structures specific to Kripke
  semantics and, at the syntactic level, of modalities.
  In that approach the base layer can be considered as a parameter and
  treated fully implicitly as an abstract institution while the upper
  layer considers explicit Kripke structures and modalities that are
  parameterised by the base layer. 

\item The approach of the stratified institutions that is fully
  abstract without any explicit Kripke structures and modalities. 
  
\end{enumerate}
The drawback of the former approach is precisely its rigid commitment
to a specific common concept of Kripke semantics and modal syntax.
Each time one deals with different such concepts, or even goes beyond
Kripke semantics, one has to reconstruct this upper level  and
redevelop most of the theory often by repeating the same ideas. 
On the other hand, due to its high abstraction level, the latter
approach is free of such issues and supports a full top down
development process where concepts are introduced axiomatically on a
by-need basis.
A typical example of this methodology comes from \cite{KripkeStrat}
where Kripke semantics and modalities are not assumed explicitly but
are treated implicitly in a fully modular and axiomatic manner.

In order to retain some of the benefits of the two-layered approach,
such as the hierarchical shape of the respective model theories that
at the bottom are based on a concept of possible worlds, here we take
a step further in this methodology by introducing a  concept of
decomposition of a stratified institution. 
In brief, we associate to a stratified institution a couple of
abstract projections to other stratified institutions that in examples
correspond to the two layers discussed above.
But now the projection corresponding to the upper layer is fully
abstract.
Most examples / applications of stratified institutions can be
presented as decomposed stratified institutions in a meaningful way. 

\subsubsection{Model amalgamations and diagrams}

The institution theoretic analysis of model theory has established
model amalgamation and the method of diagrams as the most pervasive
properties in model theory.
While in classical concrete model theory the prominent role of the
latter is recognised as such, model amalgamation has a rather implicit
role.
In fact it is the merit of the institution theoretic approach to model
theory to bring model amalgamation to surface and reveal its
importance. 
With respect to diagrams, this concept got a fully abstract
institution theoretic formulation in \cite{edins}.
The literature on institution theoretic model theory abounds of
situations supporting the claims about the role of the two properties
and many of these can be found in \cite{iimt}.
At the general level of bare abstract stratified institutions both
properties have to be assumed and then established only at the level
of the concrete examples.
By the decomposition technique discussed above we are able to actually
establish these properties at the level of abstract stratified
institutions from the corresponding properties of the two
``components''.
The value of these results reside in the fact that, on the one hand
they define classes of abstract stratified institutions that admit
these properties, and on the other hand they provide an easy way to
establish them in concrete situations because the problem is reduced
to the two components where things are much simplified.  

\subsection{Summary of contents}

This article is structured as follows:
\begin{enumerate}

\item In a preliminary section we review some basic concepts from the
  common institution theory and also from stratified institution
  theory.

\item In the next section we define the concept of decomposition of
  stratified institutions.

\item In a section dedicated to model amalgamation we explore two
  different concept of model amalgamation that are relevant for
  stratified institutions.
  Then we develop a general result on the existence of model
  amalgamation for decomposed stratified institutions.

\item In a section dedicated to diagrams we develop a construction of
  diagrams in decomposed stratified institutions.

\end{enumerate}

\section{Preliminaries}

In this section we recall from the literature some category and
institution theoretic notions that will be used in the paper.
However Section \ref{si-morphism} is an exception in the sense that it
introduces a new concept.
In order to enhance the readability of the paper by keeping it
reasonably self contained we deliberately take the choice of a
relatively extensive review of the needed concepts as well as of
relevant examples.

The contents of this section is as follows:
\begin{enumerate}

\item We fix some category theory notations and list the category
  theoretic concepts needed in order to study this work.

\item We recall the definition of institution and present briefly a
  couple of the most common examples of model theories captured as
  institutions.
  We provide a list of `sub-institutions' of first-order logic that
  will be used in the paper.

\item We recall one of the two dual concepts of mappings of institutions,
  namely that of institution morphism, which is the relevant one for
  our work.  

\item We recall the definition of stratified institutions.

\item We present the general representation of stratified institutions
  as ordinary institutions that has been introduced in
  \cite{KripkeStrat}.
  This is a mere technical device.

\item We discuss a representative list of examples of stratified
  institutions.

\item We extend the concept of institution morphism from ordinary
  institution theory to stratified institutions.

\item By following \cite{KripkeStrat} we present an implicit concept
  of nominals in stratified institutions.
  
\end{enumerate}

\subsection{Category theory}

The mathematical structures in institution theory are category
theoretic.
We usually follow the terminology and notations of \cite{maclane98}
with some few notable exceptions.
One of them is the way we write compositions.
Thus we will use the diagrammatic notation for compositions of arrows
in categories, i.e. if $f \co A \ra B$ and $g\co B \ra C$ are arrows
then $f;g$ denotes their composition.
Let $\Set$ denote the category of sets, $\CAT$ denote the
``quasi-category'' of categories, $|\CAT|$ the collection of all
categories. 
We use $\Ra$ rather than  $\naturalto$ for natural transformations. 

The following category theory concepts are used in our work:
opposite (dual) of category, sub-category, functor, functor category,
natural transformation, lax natural transformation, comma category,
(direct) product, co-product, pushout, pullback, epimorphism (epi),
adjunction. 
All these belong to the somehow elementary level of category theory.
In general institution theory seldom requires category theory beyond
that level.
Familiarity with these concepts is a requirement for being able to
follow this work.

\subsection{Institutions}

The seminal mathematical structure of institution theory is given in
Definition \ref{ins-dfn} below from \cite{ins}. 

\begin{definition}[Institution]\label{ins-dfn}
An  \emph{institution} $\I = 
\big(\Sign^{\I}, \Sen^{\I}, \Mod^{\I}, \models^{\I}\big)$ consists of 
\begin{itemize}

\item a category $\Sign^{\I}$ whose objects are called
  \emph{signatures},

\item a sentence functor $\Sen^{\I} \co \Sign^{\I} \ra \Set$
  defining for each signature a set whose elements are called
  \emph{sentences} over that signature and defining for each signature
  morphism a \emph{sentence translation} function, 

\item a model functor $\Mod^{\I} \co (\Sign^{\I})^{\op} \ra \CAT$
  defining for each signature $\Sigma$ the category
  $\Mod^{\I}(\Sigma)$ of \emph{$\Sigma$-models} and $\Sigma$-model
  homomorphisms, and for each signature morphism $\varphi$ the
  \emph{reduct} functor $\Mod^{\I}(\varphi)$,  

\item for every signature $\Sigma$, a binary 
  \emph{$\Sigma$-satisfaction relation}
  $\models^{\I}_{\Sigma} \subseteq |\Mod^{\I} (\Sigma)|
  \times \Sen^{\I} (\Sigma)$, 

\end{itemize}
such that for each morphism 
$\varphi\co\Sigma \rightarrow \Sigma' \in \Sign^{\I}$, 
the \emph{Satisfaction Condition}
\begin{equation}
M'\models^{\I}_{\Sigma'} \Sen^{\I}(\varphi) \rho \text{ if and only if  }
\Mod^{\I}(\varphi) M' \models^{\I}_\Sigma \rho
\end{equation}
holds for each $M'\in |\Mod^{\I} (\Sigma')|$ and $\rho \in \Sen^{\I} (\Sigma)$.
\[
\xymatrix{
    \Sigma \ar[d]_{\varphi} & \big|\Mod^{\I}(\Sigma)\big|
    \ar@{-}[r]^-{\models^{\I}_{\Sigma}} & 
    \Sen^{\I}(\Sigma) \ar[d]^{\Sen^{\I}(\varphi)} \\
    \Sigma' & \big| \Mod^{\I}(\Sigma')\big| \ar[u]^{\Mod^{\I}(\varphi)} 
    \ar@{-}[r]_-{\models^{\I}_{\Sigma'}} & \Sen^{\I}(\Sigma')
  }
\]
\end{definition}

\begin{notation}
We may omit the superscripts or subscripts from the notations of the
components of institutions when there is no risk of ambiguity. 
For example, if the considered institution and signature are clear,
we may denote $\models^{\I}_\Sigma$ just by $\models$. 
For $M = \Mod(\varphi)(M')$, we say that $M$ is the
\emph{$\varphi$-reduct} of $M'$ and that $M'$ is a
\emph{$\varphi$-expansion} of $M$.
Moreover in order to further simplify notations we may sometimes
denote $\Sen(\varphi) \rho$ by $\varphi \rho$ and
$\Mod(\varphi) M'$ by $\varphi M'$. 
\end{notation} 











The literature  shows myriads of logical systems from computing or
from mathematical logic captured 
as institutions.
Many of these are collected in \cite{iimt,sannella-tarlecki-book}.
In fact, an informal thesis underlying institution 
theory is that any `logic' may be captured by the above
definition. While this should be taken with a grain of salt, it
certainly applies to any logical system based on satisfaction between
sentences and models of any kind.
The institutions introduced in the following couple of examples will
be used intensively in the paper in various ways. 

\begin{example}[Propositional logic ($\PL$)]
\begin{rm}
This is defined as follows.
$\Sign^{\PL} = \Set$, for any set $P$, $\Sen(P)$ is generated by the
grammar
\[
S ::= P \mid S \wedge S \mid \neg S 
\]
and $\Mod^{\PL}(P) = (2^P,\subseteq)$.
For any function $\varphi \co P \ra P'$, $\Sen^{\PL}(\varphi)$
replaces the each element $p\in P$ that occur in a sentence $\rho$ by
$\varphi(p)$, and $\Mod^{\PL}(\varphi)(M') = \varphi;M$ for each
$M'\in 2^{P'}$. 
For any $P$-model $M \subseteq P$ and $\rho\in \Sen^{\PL}(P)$,
$M\models\rho$ is defined by induction on the structure of $\rho$ by
$(M \models p) = (p\in M)$, 
$(M \models \rho_1 \wedge \rho_2) = (M \models \rho_1) \wedge (M
\models \rho_2)$ and 
$(M \models \neg\rho) = \neg(M \models \rho)$.  
\end{rm}
\end{example}

\begin{example}[First order logic ($\FOL$)]
\begin{rm}
For reasons of simplicity of notation, our presentation of first order
logic as institution considers only its single sorted, without
equality, variant.  
A detailed presentation of full many sorted first order logic with
equality as institution may be found in numerous works in the
literature (e.g. \cite{iimt}, etc.).
 
The $\FOL$ signatures are pairs 
$(F=(F_n)_{n\in   \omega},P=(P_n)_{n\in \omega})$ where 
$F_n$ and $P_n$ are sets of function symbols and predicate symbols,
respectively, of arity $n$. 
Signature morphisms $\varphi \co (F,P) \ra (F',P')$ are tuples
$(\varphi^{f}=(\varphi^{f}_n)_{n\in
  \omega},\varphi^{p}=(\varphi^{p}_n)_{n\in\omega})$ such that  
$\varphi^{f}_n \co F_n \ra F'_n$ and $\varphi^{p}_n \co P_n \ra
P'_n$. 
Thus $\Sign^{\FOL}=\Set^\omega \times \Set^\omega$. 

For any $\FOL$-signature $(F,P)$, the set $S$ of the $(F,P)$-sentences
is generated by the grammar: 
\begin{equation}\label{fol-grammar}
S ::= \pi(t_1,\dots,t_n) \mid S \wedge S \mid \neg S \mid (\exists x)S'
\end{equation}
where $\pi(t_1,\dots,t_n)$ are the atoms with $\pi\in P_n$ and
$t_1,\dots,t_n$ being terms formed with function symbols from $F$, and
where $S'$ denotes the set of $(F+x,P)$-sentences with $F+x$ denoting
the family of function symbols obtained by adding the single variable
$x$ to $F_0$.

An $(F,P)$-model $M$ is a triple 
\[
M=(|M|,\{ M_\sigma \co |M|^n \ra |M| \mid \sigma\in F_n, n\in\omega \},
\{ M_\pi \subseteq |M|^n \mid \pi\in P_n, n\in\omega \}).
\]
where $|M|$ is a set called the \emph{carrier of $M$}. 
An $(F,P)$-model homomorphism $h\co M \ra N$ is a function $|M| \ra
|N|$ such that $h(M_\sigma (x_1,\dots,x_n)) = N_\sigma
(h(x_1),\dots,h(x_n))$ for any $\sigma \in F_n$ and $h(M_\pi)
\subseteq N_\pi$ for each $\pi\in P_n$. 

The satisfaction relation $M \models^{\FOL}_{(F,P)} \rho$ is the usual
Tarskian style satisfaction defined on induction on the structure of
the sentence $\rho$. 

Given a signature morphism $\varphi \co (F,P) \ra (F',P')$, the
induced sentence translation $\Sen^{\FOL}(\varphi)$ just replaces the
  symbols of any $(F,P)$-sentence with symbols from $(F',P')$
  according $\varphi$, and the induced model reduct
  $\Mod^{\FOL}(\varphi)(M')$ leaves the carrier set as it is and for
  any $x$ function or predicate symbol of $(F,P)$, it interprets $x$ as
  $M'_{\varphi(x)}$.  

In what follows we shall also consider the following parts (or
`sub-institutions') of $\FOL$ that are determined by restricting the
$\FOL$ signatures as follows:
\begin{itemize}

\item $\REL$: no function symbols (hence $\Sign^{\REL} \cong
  \Set^\omega$);

\item $\RELC$: no function symbols of arity greater than 0; 

\item $\BREL$: no function symbols and only one binary predicate symbol
  $\lambda$   (hence $\Sign^\BREL \cong \{ \lambda \}$);  

\item $\SETC$: no predicate symbols and no function symbols of arity
  greater than $0$ (hence  $\Sign^{\SETC} \cong \Set$); 

\item $\BRELC$: one binary predicate symbol and no function symbols of arity
  greater than $0$ (hence $\Sign^{\BRELC} \cong \Set$); 


\end{itemize}
\end{rm}
\end{example}





\subsection{Institution morphisms}

From the perspective of the mathematical structure, institution
morphisms \cite{ins} are just `homomorphisms of institutions'. 
So they are mappings between institutions that preserve the
mathematical structure of institutions.  

\begin{definition}[Morphism of institutions]
  \label{institution-morphism}
Given two institutions \\
$\I_i = (\Sign_i, \Sen_i, \Mod_i , \models_i)$,
with $i \in \{1, 2\}$, an \emph{institution morphism} \\
$(\Phi, \alpha, \beta) \co \I_2 \ra \I_1$ consists of 
\begin{itemize}

\item a signature functor $\Phi \co \Sign_2 \ra \Sign_1$,

\item a natural transformation $\alpha \co \Phi;\Sen_1 \Ra \Sen_2$,
  and 

\item a natural transformation $\beta\co \Mod_2 \Ra \Phi^{\op} ;
  \Mod_1$ 

\end{itemize}
such that the following Satisfaction Condition holds for  
any $\I_2$-signature $\Sigma_2$, $\Sigma_2$-model $M_2$ and
$\Phi(\Sigma_2)$-sentence $\rho$:
\[
M_2  \ \models_2 \ \alpha_{\Sigma_2} \rho \mbox{ \ if and only if \ }
\beta_{\Sigma_2} M_2 \ \models_1 \ \rho.
\]
\end{definition}

There is a dual notion of `homomorphism of institutions' in which
the direction of the sentence translations is reversed
\cite{jm-granada89,tarlecki95,tarlecki98,mossakowski96,goguen-rosu2000}.  
These are currently called \emph{comorphism} and although they bear
symmetry with morphisms their usage is very different. 
While institution morphisms have a projection feeling the comorphism
have an embedding feeling. 
However the latter are also used for encoding `more complex'
institutions to `simpler' institution by using the technique of
institutional theories (more details on that may be found in
\cite{iimt}). 

For examples we refer to \cite{iimt}.
Under a straightforward concept of composition, defined component-wise
on the three components (see \cite{iimt}), we get a category with
the institutions as objects and the institution morphisms as arrows.

\subsection{Stratified institutions}

Informally, the main idea behind the concept of stratified institution
as introduced in \cite{strat} is to enhance the concept of institution
with `states' for the models.
Thus each model $M$ comes equipped with a \emph{set} $\sem{M}$. 
A typical example is given by the Kripke models, where $\sem{M}$ is
the set of the possible worlds in the Kripke structure $M$.
However this is not the only possibility for models with states. 

The following definition has been given in \cite{KripkeStrat} and
represents an important upgrade of the original definition from
\cite{strat}, the main reason being to make the definition of
stratified institutions really usable for doing in-depth model theory.  
Independently another upgrade has been proposed in
\cite{aiguier-bloch2019}; however there is a strong convergence
between the two upgrades. 

\begin{definition}[Stratified institution]\label{strat-dfn}
A {\em stratified institution} $\SI$ is a tuple 
\[
(\Sign^\SI, \Sen^\SI, \Mod^\SI,\sem{\_}^\SI,\models^\SI)
\]
consisting of: 
\begin{itemize}

\item[--] a category $\Sign^\SI$ of signatures,

\item[--] a sentence functor $\Sen^\SI \co  \Sign^\SI \rightarrow \Set$;

\item[--] a model functor $\Mod^\SI \co (\Sign^\SI)^{\op} \rightarrow \CAT$; 

\item[--] a ``stratification'' lax natural transformation $\sem{\_}^\SI\co
  \Mod^\SI \Ra \SET$, where $\SET\co \Sign^\SI \ra \CAT$ is a functor
  mapping each signature to $\Set$; and 

\item[--] a satisfaction relation between models and sentences which is
parameterized by model states,
$M \ (\models^\SI)^w_\Sigma \ \rho$ where $w \in \sem{M}^\SI_\Sigma$
such that  
\begin{equation}\label{strat-sat-cond-eq}
\Mod^\SI(\varphi) M' \ \ (\models^\SI)^{\sem{M'}_\varphi w}_\Sigma \ \ \rho
\mbox{ \ if and only if \ } 
M' \ \ (\models^\SI)^w_{\Sigma'} \ \ \Sen^\SI(\varphi) \rho
\end{equation}
holds for any signature morphism $\varphi \co \Sigma \ra \Sigma'$,
$\Sigma'$-model $M'$, $w\in \sem{M'}^\SI_{\Sigma'}$, and
$\Sigma$-sentence $\rho$. 
\end{itemize}
Like for ordinary institutions, when appropriate we shall also use
simplified notations without superscripts or subscripts that are clear
from the context.  
\end{definition}

The lax natural transformation property of $\sem{\_}$ is depicted in
the diagram below
\[
\xymatrix @C+2em {
\Sigma''  &  
  \Mod(\Sigma'') \ar[r]^{\sem{\_}_{\Sigma''}} \ar[d]_{\Mod(\varphi')}
  \ar@/^.9pc/[dr]_{\quad} 
  \ar@/_.7pc/[dr]
   &
  \Set
  \ar@{}[ld]^(.35){}="a"^(.62){}="b" 
  \ar@{=>} "a";"b"^{ \ \ \sem{\_}_{\varphi'}}  
  \ar[d]^{=}
     \\
\Sigma' \ar[u]_{\varphi'} &  
  \Mod(\Sigma') \ar[d]_{\Mod(\varphi)} \ar[r]|{\sem{\_}_{\Sigma'}}  
  \ar@/^.9pc/[dr]_{\quad} 
  \ar@/_.7pc/[dr] & 
  \Set  \ar[d]^{=} 
  \ar@{}[ld]^(.35){}="c"^(.62){}="d" 
  \ar@{=>} "c";"d"^{ \ \ \sem{\_}_{\varphi}}  
&  \\
\Sigma \ar[u]_{\varphi} &  
  \Mod(\Sigma) \ar[r]_{\sem{\_}_{\Sigma}} & \Set
}
\]
with the following compositionality property for each $\Sigma''$-model
$M''$: 
\begin{equation}\label{strat1}
\sem{M''}_{(\varphi;\varphi')} =
\sem{M''}_{\varphi'};\sem{\Mod(\varphi')(M'')}_\varphi.
\end{equation}
Moreover the natural transformation property of each
$\sem{\_}_\varphi$ is given by the commutativity of the following
diagram:
\begin{equation}\label{diag1} 
\xymatrix{
M' \ar[d]_{h'}  & & 
  \sem{M'}_{\Sigma'} \ar[r]^-{\sem{M'}_{\varphi}}
  \ar[d]_{\sem{h'}_{\Sigma'}} & 
  \sem{\Mod(\varphi)(M')}_{\Sigma} \ar[d]^{\sem{\Mod(\varphi)(h')}_\Sigma}
 \\
N' & & 
 \sem{N'}_{\Sigma'} \ar[r]_-{\sem{N'}_{\varphi}} & 
  \sem{\Mod(\varphi)(N')}_{\Sigma}
}
\end{equation}

\

The satisfaction relation can be presented as a natural transformation
$\models \co \Sen \Ra \sem{\Mod(\_) \ra \Set}$ where the functor 
$\sem{\Mod(\_) \ra \Set} \co \Sign \ra \Set$ is defined by 
\begin{itemize}

\item[--] for each signature $\Sigma\in |\Sign|$, 
$\sem{\Mod(\Sigma) \ra \Set}$ denotes the set of all the mappings 
$f \co |\Mod(\Sigma)| \ra \Set$ such that $f(M) \subseteq
\sem{M}_\Sigma$; and 

\item[--] for each signature morphism $\varphi \co \Sigma \ra \Sigma'$, 
  \[
    \sem{\Mod(\varphi) \ra \Set}(f)(M') = 
    \sem{M'}_\varphi^{-1}(f(\Mod(\varphi) M')).
  \]
  
\end{itemize}
A straightforward check reveals that the Satisfaction Condition
(\ref{strat-sat-cond-eq}) appears exactly as the naturality property
of $\models$: 
$$\xy
\xymatrix{
\Sigma \ar[d]_{\varphi}  & & 
  \Sen(\Sigma) \ar[r]^-{\models_\Sigma}
  \ar[d]_{\Sen(\varphi)} & 
  \sem{\Mod(\Sigma) \ra \Set} \ar[d]^{\sem{\Mod(\varphi) \ra \Set}}
 \\
\Sigma' & & 
  \Sen(\Sigma') \ar[r]_-{\models_{\Sigma'}} & 
  \sem{\Mod(\Sigma') \ra \Set}
}
\endxy$$

Ordinary institutions are the stratified institutions
for which $\sem{M}_\Sigma$ is always a singleton set. 
In Defintion~\ref{strat-dfn} we have removed the surjectivity
condition on $\sem{M'}_\varphi$ from the definition of the stratified
institutions of \cite{strat} and will rather make it  explicit when
necessary. 
This is motivated by the fact that most of the results developed do
not depend upon this condition which however holds in all examples
known by us.  
In fact in many of the examples $\sem{M'}_\varphi$ are even
identities, which makes $\sem{\_}$ a strict rather than lax natural
transformation.
In such cases the stratified institution itself is called a
\emph{strict stratified institution}. 
A notable exception, when $\sem{\_}$ is a proper lax natural
transformation is given by Example~\ref{ofol-ex}. 
Also the definition of stratified institution of \cite{strat} did not
introduce $\sem{\_}$ as a lax natural transformation, but rather as an
indexed family of mappings without much compositionality properties,
which was enough for the developments in \cite{strat}.  

The following very expected property does not follow from the axioms
of Definition~\ref{strat-dfn}, hence we impose it explicitly. 
It holds in all the examples discussed in this paper. 

\noindent
\textbf{Assumption:} In all considered stratified institutions the
satisfaction is preserved by model isomorphisms, i.e. for each
$\Sigma$-model isomorphism $h \co M \ra N$, each $w\in
\sem{M}_\Sigma$, and each $\Sigma$-sentence $\rho$, 
\[
M \models^w \rho \mbox{ \ if and only if \ }
N \models^{\sem{h} w} \rho.
\]

\subsection{Reducing stratified institutions to ordinary institutions}
\label{red-strat-sec}

The following construction from \cite{KripkeStrat} will be used
systematically in what follows for reducing stratified institution
theoretic concepts to ordinary institution theoretic concepts, and
consequently for reusing results from the latter to the former realm.  

\begin{fact}\label{sharp-institution-fact}
Each  stratified institution 
$\SI = (\Sign,\Sen,\Mod,\sem{\_},\models)$ determines the
following ordinary institution 
$\SI^\sharp = (\Sign,\Sen,\Mod^\sharp,\models^\sharp)$ (called the
\emph{local institution of $\SI$}) where
\begin{itemize}

\item[--] the objects of $\Mod^\sharp (\Sigma)$ are the pairs $(M,w)$
  such that $M\in |\Mod(\Sigma)|$ and $w\in \sem{M}_\Sigma$;  

\item[--] the $\Sigma$-homomorphisms  $(M,w)\ra (N,v)$ are the pairs
  $(h,w)$ such that $h \co M \ra N$ and $\sem{h}_\Sigma  w = v$; 

\item[--] for any signature morphism $\varphi\co \Sigma \ra \Sigma'$
  and any $\Sigma'$-model $(M',w')$
\[
\Mod^\sharp (\varphi)(M',w') = 
(\Mod(\varphi) M',\sem{M'}_\varphi w');
\] 

\item[--] for each $\Sigma$-model $M$, each $w\in \sem{M}_\Sigma$, and
  each $\rho\in \Sen(\Sigma)$
\begin{equation}\label{satcond}
((M,w) \models^\sharp_\Sigma \rho) = (M \models^w_\Sigma \rho).
\end{equation}
\end{itemize} 
\end{fact}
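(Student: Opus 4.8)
The plan is to verify the two defining requirements of an institution from Definition~\ref{ins-dfn} for $\SI^\sharp$: that $\Mod^\sharp$ is a (contravariant) functor $(\Sign)^{\op}\ra\CAT$, and that the Satisfaction Condition holds. Since the signature category and the sentence functor are carried over from $\SI$ unchanged, nothing needs to be checked for them, and the whole argument amounts to tracking how the lax-natural-transformation data of $\sem{\_}$ supplies exactly the coherence needed.

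First I would confirm that each $\Mod^\sharp(\Sigma)$ is a category. For composable $(h,w)\co (M,w)\ra(N,v)$ and $(g,v)\co(N,v)\ra(P,u)$ I set the composite to be $(h;g,w)$, and functoriality of $\sem{\_}_\Sigma$ gives $\sem{h;g}_\Sigma w=\sem{g}_\Sigma(\sem{h}_\Sigma w)=\sem{g}_\Sigma v=u$, so the composite indeed lands at $(P,u)$; identities $(1_M,w)$ are legitimate because $\sem{1_M}_\Sigma w=w$. Associativity and the unit laws are then inherited from $\Mod(\Sigma)$ since the first component behaves exactly as in that category.

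Next, for a signature morphism $\varphi\co\Sigma\ra\Sigma'$, I would check that $\Mod^\sharp(\varphi)\co\Mod^\sharp(\Sigma')\ra\Mod^\sharp(\Sigma)$ is a functor. On objects this is immediate: $\sem{M'}_\varphi$ takes values in $\sem{\Mod(\varphi)(M')}_\Sigma$, so $(\Mod(\varphi)M',\sem{M'}_\varphi w')$ is a well-formed object. The one genuinely delicate point, which I expect to be the crux, is well-definedness on morphisms: a morphism $(h',w')\co(M',w')\ra(N',v')$ should be sent to a morphism $(\Mod(\varphi)h',\sem{M'}_\varphi w')$ into $(\Mod(\varphi)N',\sem{N'}_\varphi v')$, and this forces the equation $\sem{\Mod(\varphi)(h')}_\Sigma(\sem{M'}_\varphi w')=\sem{N'}_\varphi v'$. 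Using $v'=\sem{h'}_{\Sigma'}w'$, this is precisely the commutativity of the naturality square~(\ref{diag1}) for $\sem{\_}_\varphi$ read at the element $w'$; so this is where diagram~(\ref{diag1}) is used in an essential way, and everything else reduces to bookkeeping.

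Finally I would verify functoriality of $\Mod^\sharp$ with respect to signature morphisms and then the Satisfaction Condition. Preservation of identities is routine; preservation of composition, $\Mod^\sharp(\varphi;\varphi')=\Mod^\sharp(\varphi');\Mod^\sharp(\varphi)$, holds on the model component by functoriality of $\Mod$ and on the state component by the lax compositionality~(\ref{strat1}), namely $\sem{M''}_{\varphi;\varphi'}=\sem{M''}_{\varphi'};\sem{\Mod(\varphi')(M'')}_\varphi$. For the Satisfaction Condition, unfolding the definition~(\ref{satcond}) on both sides converts the desired equivalence into $M'\models^{w'}_{\Sigma'}\Sen(\varphi)\rho$ iff $\Mod(\varphi)M'\models^{\sem{M'}_\varphi w'}_\Sigma\rho$, which is verbatim the stratified Satisfaction Condition~(\ref{strat-sat-cond-eq}) of $\SI$. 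Hence the Satisfaction Condition for $\SI^\sharp$ carries no content beyond that of $\SI$, and the entire verification is supplied by the functoriality, naturality, and compositionality clauses packaged in the lax natural transformation $\sem{\_}$.
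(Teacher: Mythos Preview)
Your verification is correct and complete: you identify precisely the three pieces of structure from the lax natural transformation $\sem{\_}$ that do the work (functoriality of each $\sem{\_}_\Sigma$ for the category structure, the naturality square~(\ref{diag1}) for well-definedness of $\Mod^\sharp(\varphi)$ on morphisms, and the compositionality~(\ref{strat1}) for functoriality in $\varphi$), and then reduce the Satisfaction Condition of $\SI^\sharp$ directly to~(\ref{strat-sat-cond-eq}). The paper itself does not supply a proof; it records this as a Fact imported from~\cite{KripkeStrat}, so there is no authorial argument to compare against---your write-up is exactly the routine unpacking one would expect.
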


The preservation of $\models$ under model isomorphisms imply 
the preservation of $\models^\sharp$ under model isomorphisms.
This follows immediately by noting that $(h,w)$ is a model
isomorphism in $\SI^\sharp$ if and only if $h$ is a model isomorphism
in $\SI$. 

The following second interpretation of  stratified
institutions as ordinary institutions has been given in
\cite{strat}.
Note that unlike $\SI^\sharp$ above, $\SI^*$ below shares with $\SI$ the
model functor. 

\begin{definition}
For any stratified institution $\SI =
(\Sign,\Sen,\Mod,\sem{\_},\models)$ we say that \emph{$\sem{\_}$ is
surjective} when for each
signature morphism $\varphi \co \Sigma \ra \Sigma'$ and each
$\Sigma'$-model $M'$, $\sem{M'}_\varphi \co \sem{M'}_{\Sigma'} \ra
\sem{\Mod(\varphi) M'}_{\Sigma}$ is surjective.  
\end{definition}

\begin{fact}
Each  stratified institution $\SI =
(\Sign,\Sen,\Mod,\sem{\_},\models)$ with $\sem{\_}$ surjective
determines an (ordinary) institution $\SI^* =
(\Sign,\Sen,\Mod,\models^*)$ (called the \emph{global institution of
  $\SI$}) by defining   
\[
(M \models^*_\Sigma \rho) = 
\bigwedge \{ M \models^w_\Sigma \rho \mid w\in \sem{M}_\Sigma \}. 
\]
\end{fact}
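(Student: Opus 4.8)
The plan is to observe that $\SI^*$ shares its signature category, sentence functor, and model functor (hence all the model categories and reduct functors) with $\SI$, so these data already satisfy the required functoriality conditions of an institution. The only thing left to establish is therefore the Satisfaction Condition for the newly defined relation $\models^*$. Concretely, I would fix a signature morphism $\varphi \co \Sigma \ra \Sigma'$, a $\Sigma'$-model $M'$, and a $\Sigma$-sentence $\rho$, and prove that $M' \models^*_{\Sigma'} \Sen(\varphi)\rho$ holds if and only if $\Mod(\varphi)M' \models^*_\Sigma \rho$ holds.

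First I would unfold both sides using the definition of $\models^*$ as a conjunction (universal quantification) over states. The left-hand side unfolds to the statement that $M' \models^w_{\Sigma'} \Sen(\varphi)\rho$ holds for every $w \in \sem{M'}_{\Sigma'}$, while the right-hand side unfolds to the statement that $\Mod(\varphi)M' \models^v_\Sigma \rho$ holds for every $v \in \sem{\Mod(\varphi)M'}_\Sigma$.

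Next I would apply the stratified Satisfaction Condition (\ref{strat-sat-cond-eq}) of $\SI$. For each fixed $w \in \sem{M'}_{\Sigma'}$ it yields the equivalence
\[
\big(M' \models^w_{\Sigma'} \Sen(\varphi)\rho\big) \;\Longleftrightarrow\; \big(\Mod(\varphi)M' \models^{\sem{M'}_\varphi w}_\Sigma \rho\big).
\]
Hence the left-hand side is equivalent to the assertion that $\Mod(\varphi)M' \models^v_\Sigma \rho$ holds for every $v$ lying in the image $\im(\sem{M'}_\varphi) \subseteq \sem{\Mod(\varphi)M'}_\Sigma$.

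Finally, this is where surjectivity enters and constitutes the one genuinely essential hypothesis. Since $\sem{\_}$ is surjective, the map $\sem{M'}_\varphi \co \sem{M'}_{\Sigma'} \ra \sem{\Mod(\varphi)M'}_\Sigma$ is onto, so $\im(\sem{M'}_\varphi)$ is the whole of $\sem{\Mod(\varphi)M'}_\Sigma$; thus quantifying $v$ over the image is the same as quantifying $v$ over all states of the reduct, and the two unfolded statements coincide. I expect this identification of the two index sets to be the main (indeed the only) point where care is needed: without surjectivity the reduct could carry a state $v$ outside $\im(\sem{M'}_\varphi)$ at which $\rho$ fails, breaking the ``only if'' direction while the left-hand conjunction still holds. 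No calculation beyond this is required, and the Assumption on preservation of satisfaction under isomorphisms plays no role, so the verification is complete.
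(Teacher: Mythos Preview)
Your argument is correct and is exactly the standard verification: unfold $\models^*$ as a universal quantification over states, apply the stratified Satisfaction Condition \eqref{strat-sat-cond-eq} pointwise, and then use surjectivity of $\sem{M'}_\varphi$ to identify the two ranges of quantification. The paper itself does not spell out a proof of this Fact (it is taken over from \cite{strat} and treated as evident), so there is nothing to compare against beyond noting that your write-up matches the intended reasoning.
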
 


From now on whenever we invoke an institution $\SI^*$ we tacitly
assume that $\sem{\_}^{\SI}$ is surjective. 

The institutions $\SI^\sharp$ and $\SI^*$ represent generalizations of
the concepts of local and global satisfaction, respectively, from modal
logic (e.g. \cite{blackburn-rijke-venema2002}).
While $\SI^*$ ``forgets'' the stratification of $\SI$, $\SI^\sharp$
fully retains it (but in an implicit form).
This is the reason why $\SI^\sharp$ rather than $\SI^*$ can be used for
reflecting concepts and results from ordinary institution theory to
stratified institutions.
It is important to avoid a possible confusion regarding $\SI^\sharp$,
namely that through the flattening represented by the $\sharp$
construction stratified institution theory gets reduced to ordinary
institution theory.
This cannot be the case because although $\SI^\sharp$ being an
ordinary institution it has a particular character induced by the
stratified structure of $\SI$.
This means that many general institution theoretic concepts are not
refined enough to reflect properly the stratification aspects.  

\subsection{Concrete examples of stratified institutions}
\label{ex-sect}

Most of the examples presented below are various forms of modal logics
with Kripke semantics.
However a few of them go beyond the Kripke semantics. 
They can be found in greater detail in \cite{KripkeStrat}. 

\begin{example}[Modal propositional logic ($\MPL$)]\label{mpl-ex}
\begin{rm}
This is the most common form of modal logic
(e.g. \cite{blackburn-rijke-venema2002}, etc.).
 
Let $\Sign^{\MPL} = \Set$.
For any signature $P$, commonly referred to as `set of propositional
variables', the set of its sentences 
$\Sen^{\MPL}(P)$ is the set $S$ defined by the following grammar
\begin{equation}\label{mpl-grammar-equation}
S \ ::= \ P \mid S \wedge S \mid \neg S \mid \Diamond S
\end{equation}
A $P$-model is Kripke structure $(W,M)$ where 
\begin{itemize}

\item $W = (|W|,W_\lambda)$ consists of set (of `possible worlds')
  $|W|$ and an `accesibility' relation $W_\lambda \subseteq |W| \times
  |W|$; and 

\item $M \co |W| \ra 2^P$. 

\end{itemize}
A homomorphism $h \co (W,M) \ra (V,N)$ between Kripke structures is a
homomorphism of binary relations $h \co W \ra V$ (i.e. $h \co |W| \ra
|V|$ such that $h(W_\lambda) \subseteq V_\lambda$) and such that for
each $w\in |W|$, $M^w \subseteq N^{h(w)}$. 

The satisfaction of any $P$-sentence $\rho$ in a Kripke
structure $(W,M)$ at $w\in |W|$ is defined by recursion on the
structure of $\rho$: 
\begin{itemize}

\item $((W,M) \models_P^w \pi) = (\pi \in M^w)$;

\item $((W,M) \models_P^w \rho_1 \wedge \rho_2) = ((W,M) \models_P^w
  \rho_1) \wedge ((W,M) \models_P^w \rho_2)$; 

\item $((W,M) \models_P^w \neg \rho) = 
  \neg ((W,M) \models_P^w \rho)$; and 

\item $((W,M) \models_P^w \Diamond \rho) = 
\bigvee_{(w,w')\in W_\lambda}((W,M) \models_P^{w'} \rho)$. 

\end{itemize}
For any function $\varphi \co P \ra P'$ the $\varphi$-translation of a
$P$-sentence just replaces each $\pi\in P$ by $\varphi(\pi)$ and the
$\varphi$-reduct of a $P'$-structure $(W,M')$ is the $P$-structure
$(W,M)$ where for each $w\in |W|$, $M^w = \varphi;M'^w$. 

The stratification is defined by $\sem{(W,M)}_P = |W|$.   

Various `sub-institutions' of $\MPL$ are obtained by restricting the
semantics to particular classes of frames.
Important examples are ${\MPL}t$, ${\MPL}s4$, and ${\MPL}s5$ which are
obtained by restricting the frames $W$ to those which are
respectively, reflexive, preorder, or equivalence (see
e.g. \cite{blackburn-rijke-venema2002}).     
\end{rm}
\end{example} 

\begin{example}[First order modal logic ($\MFOL$)]
  \label{mfol-ex}
\begin{rm}
First order modal logic \cite{fitting-mendelsohn98} extends classical
first order logic with modalities in the same way 
propositional modal logic extends classical propositional logic.
However there are several variants that differ slightly in the
approach of the quantifications. 
Here we present a capture of one of the most common variants of first
order modal logic as a stratified institution. 

$\MFOL$ has the category of signatures of $\FOL$ but for the sentences
adds $S ::= \Diamond S$ to the $\FOL$ grammar (\ref{fol-grammar}). 
The $\MFOL$ $(F,P)$-models upgrade the $\MPL$ Kripke structures
$(W,M)$ to the first order situation by letting $M \co |W| \ra
|\Mod^{\FOL}(F,P)|$ such that the following sharing conditions hold:
for any $i,j \in |W|$, $|M^i| = |M^j|$ and also $M^i_x = M^j_x$ for
each constant $x\in F_0$. 
The concept of $\MFOL$-model homomorphism is also an upgrading of the
concept of $\FOL$-model homomorphism as follows: $h \co (W,M) \ra
(V,N)$ is pair $(h_0,h_1)$ where $h_0 \co W \ra V$ is a homomorphism
of binary relations (like in $\MPL$) and $h_1 \co M^w \ra N^{h_0 (w)}$
is an $(F,P)$-homomorphism of $\FOL$-models for each $w\in |W|$. 

The satisfaction $(W,M) \models^{\MFOL}_{(F,P)} \rho$ is defined by
recursion on the structure of $\rho$, like in $\MPL$ for $\wedge$,
$\neg$, and $\Diamond$, for the atoms the $\FOL$ satisfaction relation
is used, and for the quantifier case $(W,M) \models_{(F,P)} (\exists
x) \rho$ if and only if there is a valuation of $x$ into $|M|$ such
that $(W,M') \models_{(F+x,P)} \rho$ for the corresponding expansion 
$(W,M')$ of $(W,M)$ to $(F\!+\!x,P)$. 
(This makes sense because in any $\MFOL$ Kripke structure the
interpretations of the carriers and of the constants are shared.) 

The translation of sentences and the model reducts corresponding to an
$\MFOL$ signature morphism are obtained by the obvious blend of the
corresponding translations and reducts, respectively, in $\MPL$ and
$\FOL$. 

The stratification is like in $\MPL$, with $\sem{(W,M)}_{(F,P)} = |W|$.   

In the institution theory literature
(e.g. \cite{iimt,ks,HybridIns,QVHybrid}) first order modal logic is
often considered in a more general form in which the symbols that have
shared interpretations are `user defined' rather than being
`predefined' like here. 
In short this means that the signatures exhibit designated symbols
(sorts, function, or predicate) that are `rigid' in the sense that in
a given Kripke structure they share the same interpretations across
the possible worlds. 
For the single reason of making the reading easier we stick here with
a simpler variant that has constants and the single sort being
predefined as rigid. 
\end{rm}
\end{example}

\begin{example}[Hybrid logics ($\HPL$, $\HFOL$)]\label{hpl-ex}
\begin{rm}
Hybrid logics \cite{prior,blackburn2000} refine modal logics by adding
explicit syntax for the possible worlds. 
Our presentation of hybrid logics as stratified institutions is
related to the recent institution theoretic works on hybrid logics
\cite{HybridIns,QVHybrid}. 

The refinement of modal logics to hybrid ones is achieved by adding a
set component ($\Nom$) to the signatures for the so-called `nominals'
and by adding to the respective grammars 
\begin{equation}\label{hybrid-grammar}
S ::= \sen{i} \mid @_i S \mid (\exists i) S'
\end{equation}
where $i\in \Nom$ and $S'$ is the set of the sentences of the
signature that extends $\Nom$ with the nominal variable $i$. 
The models upgrade the respective concepts of Kripke structures to
$(W,M)$ by adding to $W$ interpretations of the nominals, i.e. 
$W = (|W|,\{ W_i \in |W| \mid i\in \Nom \},W_\lambda)$.
The satisfaction relations between models (i.e. Kripke structures) and
sentences extend the satisfaction relations of the corresponding
non-hybrid modal institutions with 
\begin{itemize}

\item $((W,M) \models^w \sen{i}) = (W_i = w)$; 

\item $((W,M) \models^w @_i \rho) = ((W,M) \models^{W_i} \rho)$; and 

\item $((W,M) \models^w (\exists i)\rho) = 
\bigvee \{(W',M) \models^w \rho \mid W' \mbox{ \ expansion of \ }W
\mbox{ \ to \ } \Nom\!+\!i \}$. 

\end{itemize}
Note that quantifiers over nominals allow us to simulate the binder
operator $(\downarrow \rho)$ of \cite{goranko96} by 
$(\forall i) i\impl\rho$.  

The translation of sentences and model reducts corresponding to
signature morphisms are canonical extensions of the corresponding
concepts from $\MPL$ and $\MFOL$. 

The stratifications of $\HPL$ and $\HFOL$ are like for $\MPL$ and
$\MFOL$, i.e. $\sem{(W,M)}_{(\Nom,\Sigma)} = |W|$. 
\end{rm}
\end{example}

\begin{example}[Polyadic modalities ($\MMPL$, $\MHPL$, $\MMFOL$,
  $\MHFOL$)] \label{poly-ex}
\begin{rm}
Multi-modal logics (e.g. \cite{gabbay-mml2003}) exhibit several
modalities instead of only the traditional $\Diamond$ and $\Box$ and
moreover these may have various arities.   
If one considers the sets of modalities to be variable then they have
to be considered as part of the signatures. 
We may extend each of $\MPL$, $\HPL$, $\MFOL$ and $\HFOL$ to the
multi-modal case, 
\begin{itemize}

\item by adding an `$M$' in front of each of
these names;
 
\item by adding a component $\Lambda = (\Lambda_n)_{n\in\omega}$ to
  the respective signature concept (with $\Lambda_n$ standing for the
  modalities symbols of arity $n$), e.g. an $\MHFOL$ signature would
  be a tuple of the form $(\Nom,\Lambda,(F,P))$; 

\item by replacing in the respective
  grammars the rule $S ::= \Diamond S$ by the set of rules 
\[
\{ S ::= \langle \lambda \rangle S^n \mid \lambda\in \Lambda_{n+1}, n\in\omega \};
\]
\item by replacing the binary relation $W_\lambda$ from the models
  $(W,M)$ with a set of interpretations 
  $\{ W_\lambda \subseteq |W|^n \mid \lambda\in\Lambda_n, n\in\omega \}$.   

\end{itemize}
Consequently the definition of the satisfaction relation gets upgraded
with 
\[
\mbox{ \ for each \ }
\lambda\in\Lambda_{n+1}, \  
((W,M) \models^w \langle\lambda\rangle(\rho_1,\dots,\rho_n)) =
\big(\bigvee_{(w,w_1,\dots,w_n)\in W_\lambda}\bigwedge_{1\leq i\leq n}
(W,M)\models^{w_i} \rho_i\big).
\]
The stratification is the same like in the previous examples,
i.e. $\sem{(W,M)}_{(\Nom,\Lambda,\Sigma)} =|W|$. 
\end{rm}
\end{example}

\begin{example}[Modalizations of institutions; $\HHPL$]
  \label{hhpl-ex}
\begin{rm}
In a series of works \cite{ks,HybridIns,QVHybrid} modal logic and
Kripke semantics are developed by abstracting away details that do not
belong to modality, such as sorts, functions, predicates, etc. 
This is achieved by extensions of abstract institutions (in the
standard situations meant in principle to encapsulate the atomic part
of the logics) with the essential ingredients of modal logic and
Kripke semantics. 
The result of this process, when instantiated to various concrete
logics (or to their atomic parts only) generate uniformly a wide range
of hierarchical combinations between various flavours of modal logic
and various other logics. 
Concrete examples discussed in \cite{ks,HybridIns,QVHybrid} include
various modal logics over non-conventional structures of relevance in
computing science, such as partial algebra, preordered algebra, etc. 
Various constraints on the respective Kripke models, many of them
having to do with the underlying non-modal structures, have also been
considered. 
All these arise as examples of stratified institutions like the
examples presented above in the paper. 
This great multiplicity of non-conventional modal logics constitute
an important range of applications for this work.

An interesting class of examples that has emerged quite smoothly out
of the general works on hybridization\footnote{I.e. Modalization including
  also hybrid logic features.} of institutions is that of
multi-layered hybrid logics that provide a logical base for specifying
hierarchical transition systems (see \cite{madeira-phd}).  
As a single simple example let us present here the double layered
hybridization of propositional logic, denoted $\HHPL$.\footnote{Other
  interesting examples that may be obtained by double or multiple
  hybridizations of logics would be $\HHFOL$, $\HHHPL$, etc., and also
  their polyadic multi-modalities extensions.}   
This amounts to a hybridization of $\HPL$, its models thus being
``Kripke structures of Kripke structures''.  

The $\HHPL$ signatures are triples $(\Nom^0,\Nom^1,P)$ with
$\Nom^0$ and $\Nom^1$ denoting the nominals of the first and second
layer of hybridization, respectively. 
The $(\Nom^0,\Nom^1,P)$-sentences are built over the two hybridization
layers by taking the $(\Nom^0,P)$-sentences as atoms in the grammar
for the $\HPL$ sentences with nominals from $\Nom^1$.   
In order to prevent potential ambiguities, in general we tag the
symbols of the respective layers of hybridization by the superscripts
$0$ (for the first layer) and $1$ (for the second layer).  
This convention should include nominals and connectives ($\Diamond$,
$\wedge$, etc.) as well as quantifiers. 
For instance, the expression  $@_{j^1} k^0 \wedge^1 \Box^1 \rho$ is a
sentence of $\HHPL$ where the symbols $k$ and $j$ represent nominals
of the first and second level of hybridization and $\rho$ a $\PL$
sentence. 
On the other hand, according to this tagging convention the expression
$@_{j^0} k^1 \wedge^1 \Box^1 \rho$ would not parse. 

Our tagging convention extends also to $\HHPL$ models.
A $(\Nom^0,\Nom^1,P)$-model is a pair $(W^1,M^1)$ with $W^1$ being a
$\Mod^\BRELC (\lambda)$ model and $M^1 = ((M^1)^w)_{w\in |W^1|}$ where
$(M^1)^w$ is a $(\Nom^0,P)$-model in $\HPL$, denoted $((W^0)^w,(M^0)^w)$. 
We also require that for all $w,w'\in |W^1|$, we have that
$|(W^0)^w|=|(W^0)^{w'}|$ and $(W^0)^w_i = (W^0)^{w'}_i$ 
for each $i\in \Nom^0$. 

These definitions extend in the obvious way to signature morphisms,
sentence translations, model reducts and satisfaction relation. 
We leave these details as exercise for the reader. 
Then $\HHPL$ has the same stratified structure like $\HPL$ and
$\HFOL$, namely $\sem{(W^1,M^1)}_{(\Nom^0,\Nom^1,P)} = |W^1|$.

It is easy to see that in $\HHPL$ the semantics of the Boolean
connectors and of the quantifications with nominals of the lower layer
is invariant with respect to the hybridization layer; this means that
in these cases the tagging is not necessary.
For example if $\rho$ is an $\HPL$ sentence then $(\forall^1 i^0)\rho$
and $(\forall^0 i^0)\rho$ are semantically equivalent, while if $\rho$
is not an $\HPL$ sentence (which means it has some ingredients from
the second layer of hybridization) then $(\forall^0 i^0)\rho$ would
not parse. 
In both cases just using the notation $(\forall i^0)$ would not carry
any ambiguities. 
\end{rm}  
\end{example}

The next series of examples include multi-modal first order logics
whose semantics are given by ordinary first order rather than
Kripke structures.   

\begin{example}[Multi-modal open first order logic ($\OFOL$, $\MOFOL$,
  $\HOFOL$, $\HMOFOL$)]\label{ofol-ex}
\begin{rm}
The stratified institution $\OFOL$ is the $\FOL$ instance of $St(\I)$, the
`internal stratification' abstract example developed in \cite{strat}. 
An $\OFOL$ signature is a pair $(\Sigma,X)$ consisting of $\FOL$
signature $\Sigma$ and a finite block of variables. 
An $\OFOL$ signature morphism $\varphi \co (\Sigma,X) \ra
(\Sigma',X')$ is just a $\FOL$ signature morphism $\varphi \co \Sigma
\ra \Sigma'$ such that $X \subseteq X'$. 

We let $\Sen^\OFOL((F,P),X)=\Sen^\FOL(F+X,P)$ and 
$\Mod^\OFOL ((F,P),X) = \Mod^\FOL(F,P)$. 

For each $((F,P),X)$-model $M$, each $w\in |M|^X$, and each
$((F,P),X)$-sentence $\rho$ we define 
\[
(M (\models^\OFOL_{(F,P),X})^w \rho) = (M^w \models^\FOL_{(F+X,P)} \rho)
\]
where $M^w$ is the expansion of $M$ to $(F\!+\!X,P)$ such that 
$M^w_X = w$. 
This is a stratified institution with $\sem{M}_{\Sigma,X} = |M|^X$
for each $(\Sigma,X)$-model $M$.
For any signature morphism $\varphi \co (\Sigma,X) \ra (\Sigma',X')$
and any $(\Sigma',X')$-model $M'$, $\sem{M'}_\varphi \co |M'|^{X'} \ra
|M'|^X$ is defined by $\sem{M'}_\varphi (a) = a|_X$ (i.e. the
restriction of $a$ to $X$). 
Note that $\sem{M'}_\varphi$ is surjective and that this provides an
example when $\sem{\_}$ is a proper lax natural transformation. 

We may refine $\OFOL$ to a multi-modal logic ($\MOFOL$) by adding 
\[
\{ S ::= \langle\pi\rangle S^n \mid \pi\in P_{n+1}, n\in\omega \}
\]
to the grammar defining each $\Sen^\OFOL ((F,P),X)$ and consequently  
by extending the definition of the satisfaction relation with 
\begin{itemize}

\item 
$(M \models^w \langle\pi\rangle (\rho_1,\dots,\rho_n)) =
\bigvee_{(w,w_1,\dots,w_n)\in (M^X)_\pi} \bigwedge_{1\leq i\leq
  n}(M\models^{w_i}\rho_i)$ for each $\pi\in P_{n+1}$, $n\in\omega$.

\end{itemize}
(Here and elsewhere $M^X$ denotes the $X$-power of $M$ in the category
of $\FOL$ $(F,P)$-models.)

Or else we may refine $\OFOL$ with nominals ($\HOFOL$) by adding the
grammar for nominals (\ref{hybrid-grammar}), for each constant $i\in
F_0$, to the grammar defining each $\Sen^\OFOL ((F,P),X)$ and
consequently extending the definition of the satisfaction relation
with   
\begin{itemize}

\item $(M\models_{(F,P),X}^w \sen{i}) = ((M^X)_i = w)$;

\item $M \models_{(F,P),X}^w @_i \rho) = 
(M \models_{(F,P),X}^{(M^X)_i} \rho)$;

\item $(M\models_{(F,P),X}^w (\exists i)\rho) = 
\bigvee \{M' \models_{(F\!+\!i,P),X}^w \rho \mid M' \mbox{ \ expansion of \ }M
\mbox{ \ to \ } (F\!+\!i,P) \}$.

\end{itemize}
We can also have $\HMOFOL$ as the blend between $\HOFOL$ and
$\MOFOL$. 
\end{rm}
\end{example}

\subsection{Stratified institution morphisms}\label{si-morphism}

They extend the concept of institution morphism (Definition
\ref{institution-morphism}) from ordinary institutions to stratified
institutions. 
The 2-dimensional aspect of the stratified institutions leads to a
higher complexity in the following definition of morphisms of
stratified institutions. 
This concept will be instrumental when defining our concept of
decompositions of stratified institutions. 

\begin{definition}[Morphism of stratified institutions]
  \label{strat-institution-morphism}
Given two stratified $\SI$ and $\SI'$ a \emph{stratified institution
  morphism}  $(\Phi, \alpha, \beta) \co \SI' \ra \SI$ consists of 
\begin{itemize}

\item a functor $\Phi \co \Sign' \to \Sign$,

\item a natural transformation $\alpha \co \Phi;\Sen \Ra \Sen'$,
  and 

\item a lax natural transformation
  $\beta\co \Mod' \Ra \Phi^{\op} ; \Mod$ such that
  $\beta;\Phi^{\op} \sem{\_} = \sem{\_}'$, 

\end{itemize}
and such that the following Satisfaction Condition holds for  
any $\SI'$-signature $\Sigma'$, any $\Sigma'$-model $M'$, any $w \in
\sem{M'}_{\Sigma'}$ and any 
$\Phi(\Sigma')$-sentence $\rho$:
\[
M' \models^{w'} \alpha_{\Sigma'} \rho \mbox{ \ if and only if \ }
\beta_{\Sigma'} M' \models^{w'}  \rho.
\]
When $\beta$ is strict, $(\Phi,\alpha,\beta)$ is called \emph{strict} too. 
\end{definition}
The condition on $\beta$ means the following:
\begin{itemize}

\item 
for each
  $\SI'$-signature $\Sigma$ the following diagram commutes
  \[
    \xymatrix{
      \Mod'(\Sigma) \ar[r]^{\beta_\Sigma} \ar[d]_{\sem{\_}'_{\Sigma}}
        & \Mod(\Phi\Sigma) \ar[dl]^{\sem{\_}_{\Phi\Sigma}}\\
        \Set &
    }
  \]

\item
  for each $\SI'$-signature morphism $\varphi \co \Sigma \to \Omega$
  \[
  \beta_\Omega \sem{\_}_{\Phi\varphi} ; \beta_\varphi
  \sem{\_}_{\Phi\Sigma} = \sem{\_}'_\varphi 
  \]
  which can be visualised as the commutativity of the following
  diagram:
  \[
    \xymatrix @R+1em {
      &
      \Mod'(\Omega) \ar[r]^{\beta_\Omega} \ar[d]|{\Mod'(\varphi)}
      \ar@/_{2pc}/[dd] \ar@/_{5pc}/[dd]_{\sem{\_}'_\Omega}
  \ar@/^.9pc/[dr]_{\quad} 
  \ar@/_.7pc/[dr] 
      & \Mod(\Phi\Omega) \ar[d]|{\Mod(\Phi\varphi)}
        \ar@{}[ld]^(.35){}="c"^(.62){}="d" 
        \ar@{=>} "c";"d"_{ \ \ \beta_{\varphi}}  
      \ar@/^{2pc}/[dd] \ar@/^{5pc}/[dd]^{\sem{\_}_{\Phi\Omega}} & \\
      \ar@{}[r]^(-.15){}="c"^(.52){}="d" 
      \ar@{=>} "c";"d"^{ \ \ \sem{\_}'_{\varphi}}
      &
      \Mod'(\Sigma) \ar[r]^{\beta_\Sigma} \ar[d]|{\sem{\_}'_{\Sigma}}
      & \Mod(\Phi\Sigma) \ar[d]|{\sem{\_}_{\Phi\Sigma}} &
      \ar@{}[l]^(-.15){}="c"^(.52){}="d" 
      \ar@{=>} "c";"d"_{ \ \ \sem{\_}_{\Phi\varphi}}\\
      &
      \Set \ar@{-}[r]_{=} & \Set & 
    }
  \]
  
\end{itemize}

Morphisms of stratified institutions form a category under a
composition that is defined component-wise like in the case of
morphisms of ordinary institutions:
\[
  (\Phi',\alpha',\beta') \ ; \ (\Phi,\alpha,\beta) =
  (\Phi;\Phi \ , \ \alpha\Phi';\alpha' \ , \ \beta';\beta\Phi^{\opp}).
\]

\begin{fact}\label{strict-fact}
For any morphism of stratified institutions $\SI' \to \SI$, if $\SI$
is strict then $\SI'$ is strict too.
\end{fact}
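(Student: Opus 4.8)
The plan is to reduce everything to the single compatibility condition $\beta;\Phi^{\op}\sem{\_}=\sem{\_}'$ that a stratified institution morphism must satisfy, and to read off strictness of $\SI'$ from it. Recall that a stratified institution is \emph{strict} exactly when every laxity cell of its stratification is an identity. For the hypothesis this means $\sem{\_}_{\psi}=\mathrm{id}$ for every signature morphism $\psi$ of $\Sign$, and what must be shown for $\SI'$ is that $\sem{\_}'_{\varphi}=\mathrm{id}$ for every signature morphism $\varphi\co\Sigma\ra\Omega$ of $\Sign'$.

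First I would write out the laxity-cell half of the condition $\beta;\Phi^{\op}\sem{\_}=\sem{\_}'$, namely the displayed equation following Definition~\ref{strat-institution-morphism},
\[
\sem{\_}'_{\varphi}\;=\;\beta_{\Omega}\,\sem{\_}_{\Phi\varphi}\,;\,\beta_{\varphi}\,\sem{\_}_{\Phi\Sigma}.
\]
Since $\SI$ is strict, the laxity cell $\sem{\_}_{\Phi\varphi}$ of $\SI$ at the signature morphism $\Phi\varphi$ is the identity, so its whiskering $\beta_{\Omega}\,\sem{\_}_{\Phi\varphi}$ by the functor $\beta_{\Omega}$ is again the identity. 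This collapses the right-hand side to its second factor, giving
\[
\sem{\_}'_{\varphi}\;=\;\beta_{\varphi}\,\sem{\_}_{\Phi\Sigma},
\]
i.e.\ at every $\Omega$-model $M'$ the reindexing map $\sem{M'}'_{\varphi}$ is the image under the state functor $\sem{\_}_{\Phi\Sigma}$ of the component $(\beta_{\varphi})_{M'}$ of the laxity cell of $\beta$. To pin down the source and target of this surviving whiskered cell I would use the triangle half of the same condition, $\beta_{\Sigma};\sem{\_}_{\Phi\Sigma}=\sem{\_}'_{\Sigma}$ and $\beta_{\Omega};\sem{\_}_{\Phi\Omega}=\sem{\_}'_{\Omega}$, together with the strictness equality $\Mod(\Phi\varphi);\sem{\_}_{\Phi\Sigma}=\sem{\_}_{\Phi\Omega}$ of $\SI$; these identify the domain and codomain functors of $\beta_{\varphi}\,\sem{\_}_{\Phi\Sigma}$ as $\sem{\_}'_{\Omega}$ and $\Mod'(\varphi);\sem{\_}'_{\Sigma}$ respectively, which is reassuringly the correct type for $\sem{\_}'_{\varphi}$.

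The hard part will be the last step: arguing that this surviving cell is actually an \emph{identity} and not merely a natural transformation of the right type, since strictness of $\SI'$ demands both that the two legs $\sem{\_}'_{\Omega}$ and $\Mod'(\varphi);\sem{\_}'_{\Sigma}$ coincide and that the comparison cell between them be trivial. The delicate point is that a lax $\beta$ has genuinely non-identity laxity cells $\beta_{\varphi}$, and in general a functor such as $\sem{\_}_{\Phi\Sigma}$ need not carry them to identities; what must be exploited is that the \emph{strict} stratification of $\SI$ sends $\beta_{\varphi}$ to an identity on states, so that no genuine reindexing survives after transport into $\SI'$. I expect this to be where the strictness hypothesis on $\SI$ does its real work, beyond the purely formal collapse of the first whiskering factor. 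Once this is established one has $\sem{\_}'_{\varphi}=\mathrm{id}$ uniformly in $\varphi$ and $M'$, which is exactly the strictness of $\SI'$, and no separate compositionality verification is required.
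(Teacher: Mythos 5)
Your first two steps are sound: isolating the laxity-cell half of $\beta;\Phi^{\op}\sem{\_}=\sem{\_}'$ and using strictness of $\SI$ to collapse the whiskering $\beta_\Omega\sem{\_}_{\Phi\varphi}$ to an identity correctly leaves $\sem{\_}'_\varphi=\beta_\varphi\sem{\_}_{\Phi\Sigma}$. But the step you defer as ``the hard part'' --- that ``the strict stratification of $\SI$ sends $\beta_\varphi$ to an identity on states'' --- is not something you can establish: it is false in general. Strictness of $\SI$ constrains only the reindexing cells $\sem{\_}_\psi$ attached to signature morphisms $\psi$ of $\Sign$; it says nothing about how the functors $\sem{\_}_{\Phi\Sigma}\co\Mod(\Phi\Sigma)\to\Set$ act on model \emph{homomorphisms}, and each component $\beta_\varphi (M')\co \Mod(\Phi\varphi)(\beta_\Omega M')\to\beta_\Sigma(\Mod'(\varphi)M')$ is just a model homomorphism in $\Mod(\Phi\Sigma)$, which $\sem{\_}_{\Phi\Sigma}$ may send to an arbitrary function. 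A concrete counterexample to your claimed bridging step (and, in fact, to the statement itself when $\beta$ is allowed to be properly lax): take $\SI'=\OFOL$ (Example~\ref{ofol-ex}, whose stratification is properly lax), and let $\SI$ be the strict stratified institution with a single signature $*$, no sentences, $\Mod(*)=\Set$ and $\sem{\_}_*$ the identity functor; let $\Phi$ be the constant functor, $\alpha$ empty, $\beta_{(\Sigma,X)}M=|M|^X$ (acting by post-composition on homomorphisms), and let the laxity cell $\beta_\varphi (M')\co |M'|^{X'}\to|M'|^{X}$ be the restriction map. All requirements of Definition~\ref{strat-institution-morphism} are met: the triangle condition, the pasting equation (which here reduces to the restriction map being equal to $\sem{M'}^{\OFOL}_\varphi$, true by definition of $\OFOL$), the coherence of the laxity cells, and the vacuously true Satisfaction Condition. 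Here $\SI$ is strict, yet $\OFOL$ is not strict.

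What actually makes the statement work, in the form in which the paper uses it, is strictness of the \emph{morphism}, not a hidden consequence of strictness of $\SI$: when $\beta$ is strict, every $\beta_\varphi(M')$ is an identity, functors preserve identities, so both cells in the pasting $\sem{\_}'_\varphi=\beta_\Omega\sem{\_}_{\Phi\varphi};\beta_\varphi\sem{\_}_{\Phi\Sigma}$ have identity components and $\SI'$ is strict --- a two-line argument with nothing delicate in it. This is exactly the situation in the paper's only application of the fact (Fact~\ref{strict-decomp}): there the morphism $(\Phi,\alpha,\widetilde{\beta})\co\SI\to\widetilde{\B}^C$ is the one obtained through the adjunction, hence it lies in $\SINS$ and is strict. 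So you should either add the hypothesis that the morphism is strict, after which your first two steps plus ``functors preserve identities'' complete the proof, or accept that the fully lax version you set out to prove is not provable.
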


The proof of the following result consists of straightforward
verifications; we will skip them.

\begin{proposition}
Any strict stratified institution morphism $(\Phi,\alpha,\beta) \co
\SI' \to \SI$ induces an institution morphism
$(\Phi,\alpha,\beta^\sharp)\co \SI'^\sharp \to \SI^\sharp$ where for
each $\SI'$-signature $\Sigma$, each $\Sigma$-homomorphism $h \co M
\to N$ and each $w\in \sem{M}'_\Sigma$
\[
\beta^\sharp_\Sigma h = \beta_\Sigma h \co (\beta_\Sigma M, w) \to
(\beta_\Sigma N, \sem{h}'_\Sigma w). 
\]
\end{proposition}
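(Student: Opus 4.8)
The plan is to verify the three defining conditions of Definition~\ref{institution-morphism} for the triple $(\Phi,\alpha,\beta^\sharp)$, treating $\SI'^\sharp$ as the source $\I_2$ and $\SI^\sharp$ as the target $\I_1$. Since $\SI^\sharp$ and $\SI$ share both their signature category and their sentence functor, and likewise $\SI'^\sharp$ and $\SI'$, the signature functor $\Phi \co \Sign' \ra \Sign$ and the natural transformation $\alpha \co \Phi;\Sen \Ra \Sen'$ can be taken verbatim from $(\Phi,\alpha,\beta)$; no condition on them needs re-checking. All the actual work therefore concerns the new component $\beta^\sharp$ and the Satisfaction Condition.

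First I would check that each $\beta^\sharp_\Sigma \co \Mod'^\sharp(\Sigma) \ra \Mod^\sharp(\Phi\Sigma)$, given on objects by $(M,w) \mapsto (\beta_\Sigma M,w)$ and on arrows by $(h,w) \mapsto (\beta_\Sigma h,w)$, is a well-defined functor. Well-definedness on objects is precisely the object-level triangle condition $\beta_\Sigma;\sem{\_}_{\Phi\Sigma} = \sem{\_}'_\Sigma$ of Definition~\ref{strat-institution-morphism}: it yields $\sem{\beta_\Sigma M}_{\Phi\Sigma} = \sem{M}'_\Sigma$, so that $w \in \sem{M}'_\Sigma$ is genuinely a state of $\beta_\Sigma M$ and $(\beta_\Sigma M,w)$ is a legitimate object of $\Mod^\sharp(\Phi\Sigma)$. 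The same equality read at the morphism level gives $\sem{\beta_\Sigma h}_{\Phi\Sigma} = \sem{h}'_\Sigma$, which certifies that $(\beta_\Sigma h,w)$ really is an arrow $(\beta_\Sigma M,w) \ra (\beta_\Sigma N,\sem{h}'_\Sigma w)$ in $\Mod^\sharp(\Phi\Sigma)$. Functoriality of $\beta^\sharp_\Sigma$ then follows from that of $\beta_\Sigma$ together with the description of composition in the local institution, where $(h,w)$ followed by $(g,\sem{h}'_\Sigma w)$ composes to $(h;g,w)$.

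Next I would show that $\beta^\sharp$ is a strict natural transformation $\Mod'^\sharp \Ra \Phi^{\op};\Mod^\sharp$, i.e. that for every $\varphi \co \Sigma \ra \Omega$ the square $\Mod'^\sharp(\varphi);\beta^\sharp_\Sigma = \beta^\sharp_\Omega;\Mod^\sharp(\Phi\varphi)$ commutes. Applying both paths to an object $(M',w')$ and unfolding $\Mod^\sharp$ of a signature morphism, the first coordinate reduces to $\beta_\Sigma(\Mod'(\varphi) M') = \Mod(\Phi\varphi)(\beta_\Omega M')$, which is exactly the naturality square of $\beta$; the second coordinate reduces to $\sem{M'}'_\varphi w' = \sem{\beta_\Omega M'}_{\Phi\varphi} w'$, which is the second coherence condition of Definition~\ref{strat-institution-morphism} once the comparison $2$-cell $\beta_\varphi$ is the identity, read at $M'$ and evaluated at $w'$. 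This is the step where strictness of $(\Phi,\alpha,\beta)$ is indispensable: $\SI^\sharp$ is an ordinary institution, so its model natural transformation must satisfy a strict equality of reducts, whereas for a genuinely lax $\beta$ the first coordinate would only agree up to $\beta_\varphi$ and $\beta^\sharp$ would fail to be natural in the required strict sense.

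Finally the Satisfaction Condition is a direct unfolding: by the defining equation~(\ref{satcond}) of $\models^\sharp$ in both $\SI'^\sharp$ and $\SI^\sharp$, the equivalence of $(M,w) \models^\sharp \alpha_\Sigma \rho$ with $\beta^\sharp_\Sigma(M,w) \models^\sharp \rho$ becomes the equivalence of $M \models^{w} \alpha_\Sigma \rho$ with $\beta_\Sigma M \models^{w} \rho$, which is precisely the Satisfaction Condition of the stratified institution morphism $(\Phi,\alpha,\beta)$. I expect the only genuinely delicate point to be the naturality of $\beta^\sharp$ together with the role of strictness just described; the remaining verifications are routine unfoldings of the definitions of $\SI^\sharp$, $\beta^\sharp$, and the local satisfaction relation.
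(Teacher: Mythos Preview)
Your proof is correct and carries out precisely the ``straightforward verifications'' that the paper explicitly omits; there is no alternative approach to compare against, and your identification of the strictness hypothesis as the one genuinely needed ingredient (for the naturality square of $\beta^\sharp$) is accurate.
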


\subsection{Nominals in stratified institutions}

The definitions of this section are inherited from
\cite{KripkeStrat}. 

\begin{definition}[Nominals extraction]
  \label{nom-extraction}
Given a stratified institution $\SI$, a \emph{nominals extraction} is a
pair $(N,\Nm)$ consisting of a functor $N \co \Sign^\SI \ra
\Sign^\SETC$ and a lax natural transformation $\Nm \co \Mod^\SI \Ra
N;\Mod^\SETC$ such that $\sem{\_} = \Nm ; N(\Mod^\SETC \Ra \SET)$. 
\end{definition}

\begin{fact}
A nominals extraction $(N,\Nm)$ is precisely a stratified institution 
morphism
\[
  (N,\emptyset,\Nm) \co \SI \to \SETC
\]
where $\SETC$ is considered as a stratified institution with no
sentences and for each $\SETC$-model $M$, $\sem{M} = |M|$ (the
underlying set of $M$).  
\end{fact}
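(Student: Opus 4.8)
The plan is to show that, once Definition~\ref{strat-institution-morphism} is unfolded in the special case where the target is $\SETC$ with the stated stratification, a stratified institution morphism $\SI \to \SETC$ consists of \emph{exactly} the same data as a nominals extraction, subject to \emph{exactly} the same conditions. Instantiating the general morphism $(\Phi,\alpha,\beta)\co\SI'\ra\SI$ with the source $\SI'$ taken to be $\SI$ and the target $\SI$ taken to be $\SETC$, its three components become a functor $\Phi\co\Sign^\SI\ra\Sign^\SETC$, a natural transformation $\alpha\co\Phi;\Sen^\SETC\Ra\Sen^\SI$, and a lax natural transformation $\beta\co\Mod^\SI\Ra\Phi^\op;\Mod^\SETC$ subject to $\beta;\Phi^\op\sem{\_}=\sem{\_}^\SI$. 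I would match these against the data $(N,\Nm)$ of Definition~\ref{nom-extraction} by setting $\Phi=N$ and $\beta=\Nm$, and then argue that $\alpha$ and the Satisfaction Condition are vacuous while the two side conditions on $\beta$ and $\Nm$ coincide.

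First I would dispose of the sentence data. Since $\SETC$ is taken to have no sentences, $\Sen^\SETC$ is the constant empty-set functor, so $\Phi;\Sen^\SETC$ is constantly $\emptyset$ and there is a \emph{unique} natural transformation $\Phi;\Sen^\SETC\Ra\Sen^\SI$, namely the empty one; thus the component $\alpha$ is forced to be $\emptyset$ and carries no information. For the same reason there are no $\Phi(\Sigma')$-sentences $\rho$, so the Satisfaction Condition of Definition~\ref{strat-institution-morphism} is a statement quantified over the empty set and holds vacuously. This reduces a stratified institution morphism $\SI\ra\SETC$ to precisely a pair $(\Phi,\beta)$ with $\Phi\co\Sign^\SI\ra\Sign^\SETC$ a functor and $\beta\co\Mod^\SI\Ra\Phi^\op;\Mod^\SETC$ a lax natural transformation satisfying $\beta;\Phi^\op\sem{\_}=\sem{\_}^\SI$.

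Next I would identify the two remaining conditions. The stratification of $\SETC$, given by $\sem{M}=|M|$, is by definition the lax natural transformation $\Mod^\SETC\Ra\SET$ written $(\Mod^\SETC\Ra\SET)$ in Definition~\ref{nom-extraction}; under the identifications $\Phi=N$ and $\sem{\_}^\SETC=(\Mod^\SETC\Ra\SET)$ the whiskering $\Phi^\op\sem{\_}^\SETC$ is exactly $N(\Mod^\SETC\Ra\SET)$, so the equality of lax natural transformations $\beta;\Phi^\op\sem{\_}^\SETC=\sem{\_}^\SI$ is literally the nominals-extraction equation $\sem{\_}^\SI=\Nm;N(\Mod^\SETC\Ra\SET)$. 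Here I would check both layers of this equality of lax natural transformations at once, i.e. both the object-component triangles and the morphism-component ($2$-cell) equations spelled out after Definition~\ref{strat-institution-morphism}, matching each against the corresponding component of the composite $\Nm;N(\Mod^\SETC\Ra\SET)$. It then follows that $(\Phi,\beta)$ satisfies the morphism condition if and only if $(N,\Nm)=(\Phi,\beta)$ is a nominals extraction, and the two passages are mutually inverse, establishing the claimed identification in both directions.

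There is no genuinely hard step; the content is bookkeeping. The one point that demands care is the notational reconciliation above: one must verify that the ``$\Phi^\op$'' whiskering in the morphism condition and the ``$N(-)$'' whiskering in the nominals-extraction condition denote the same lax natural transformation, and that the lax ($2$-cell) part of the equality—not merely its object components—is accounted for. Once this identification is made the equivalence is immediate.
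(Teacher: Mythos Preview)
Your proposal is correct; the paper itself gives no proof for this Fact, treating the identification as immediate from the definitions. Your unfolding of Definition~\ref{strat-institution-morphism} in the target $\SETC$---observing that the sentence component and the Satisfaction Condition are vacuous because $\Sen^\SETC$ is empty, and that the condition $\beta;\Phi^\op\sem{\_}^{\SETC}=\sem{\_}^\SI$ is literally the equation $\sem{\_}=\Nm;N(\Mod^\SETC\Ra\SET)$---is exactly the routine verification one would expect, and there is nothing to compare against.
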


\begin{example}\label{nom-extraction}
\begin{rm}
The following table shows some nominals extractions for the stratified
institutions introduced above.  
Note that $\HHPL$ admits two such nominals extractions. 

\

\noindent
\begin{scriptsize}
\begin{tabular}{rll}
stratified institution & $N$ & $\Nm$ \\\hline
$\HPL, \HFOL, \MHPL, \MHFOL$ & $N(\Nom,\Sigma) = \Nom$ &
$\Nm_{(\Nom,\Sigma)} (W,M) = (|W|,(W_i)_{i\in \Nom})$ \\\hline
$\HHPL$ & $N(\Nom^0,\Nom^1,P) = \Nom^0$ & $\Nm (W^1,M^1) =
(|(W^0)^w|,((W^0)^w_i)_{i\in \Nom^0})$ \\ 
        & $N(\Nom^0,\Nom^1,P) = \Nom^1$ & $\Nm (W^1,M^1) =
        (|W^1|,(W^1_i)_{i\in\Nom^1})$ \\\hline 
$\HOFOL, \HMOFOL$ & $N((F,P),X) = F_0$ & 
         $\Nm (M) = (|M|^X, ((M^X)_i)_{i\in F_0})$ \\
\end{tabular}
\end{scriptsize}
\end{rm}
\end{example}

\begin{definition}\label{hybrid-strat-institution-dfn}
Let $\SI$ be a stratified institution endowed with a nominals
extraction $N, \Nm$. 
For any $i\in N(\Sigma)$
\begin{itemize}

\item a $\Sigma$-sentence $\sen{i}$ is an \emph{$i$-sentence} when 
\[
(M \models^w \sen{i}) = ((\Nm_\Sigma (M))_i = w);
\]

\item for any $\Sigma$-sentence $\rho$, a $\Sigma$-sentence $@_i
  \rho$ is the \emph{satisfaction of $\rho$ at $i$} when 
\[
(M \models^w @_i \rho) = (M \models^{(\Nm_\Sigma (M))_i} \rho)
\]
for each $\Sigma$-model $M$ and for each $w\in \sem{M}_\Sigma$.
\end{itemize}
The stratified institution $\SI$ has \emph{explicit local
  satisfaction} when there exists a satisfaction at $i$ for each
sentence and each appropriate $i$. 
\end{definition}

\begin{example}\label{conn-table}
\begin{rm}
The following table shows what of the properties of
Definition \ref{hybrid-strat-institution-dfn} are satisfied by the
examples of stratified institutions given above in the paper.  

\

\noindent
\begin{small}
\begin{tabular}{rcc}
 & $\sen{i}$ & $@_i$ \\\hline 
$\MPL / \MFOL / \MMPL / \MMFOL / \OFOL / \MOFOL$ & & \\\hline
$\HPL / \HFOL / \MHPL / \MHFOL / \HOFOL / \HMOFOL$ & \checkmark &
      \checkmark \\\hline
$\HHPL$  & $\sen{i^0}$, $\sen{i^1}$ & $@_{i^0}$, $@_{i^1}$ 
\end{tabular}
\end{small}
\end{rm}
\end{example}

\section{Decompositions of stratified institutions}
\label{decomp-sec}

An analysis of the structure of conventional Kripke semantics reveals
the following situation for an individual Kripke model:
\begin{itemize}

\item There is a \emph{family} of models in a ``lower'' logical system,
  usually propositional or first order logic.
  The indexing of the family is what is usually refereed to as
  ``worlds''. 

\item Then there is a certain structure imposed upon this family of
  models.
  This happens at the level of the ``worlds'' commonly in the form of 
  relations. 
  
\end{itemize}
In this section we address this general structure of Kripke semantics
from an abstract axiomatic perspective. 
The result is a general abstract class of stratified institutions that
does not necessarily consider explicitly Kripke frames nor modalised
sentences, but which retains in an abstract form the essential idea of
a stratified institution in which a ``header'' institution structures
a certain multiplication of a ``base'' institution.

The contents of this section is as follows:
\begin{enumerate}

\item We introduce the concept of ``base'' of a stratified institution
  $\SI$ which represents the institution in which the ``worlds'' are
  interpreted as models.

\item We extend the reduction of a stratified institution $\SI$ to an
  ordinary institution $\SI^\sharp$ introduced in Section
  \ref{red-strat-sec} to an adjunction between the categories of
  stratified institutions and of ordinary institutions.

\item Finally, on the basis of the above mentioned adjunction we
  introduce the main concept of this section, namely that of
  decomposition of a stratified institution. 
  
\end{enumerate}

\subsection{Bases for stratified institutions}

In a stratified institution $\SI$ with Kripke semantics, if $M$ is a
Kripke model and $w \in \sem{M}$ then the model $(M,w)$ of
$\SI^\sharp$ represents a ``localisation'' in $M$ of the ``world''
$w$.
This corresponds to a model in ``lower'' institution. 
However the construction of $\SI^\sharp$ is independent of the fact
that $M$ is really a Kripke model, so this process of semantic
localisation is a very general one.
On the other hand we should be able to have the (syntax of the)
``lower'' logic available at the level of $\SI$.
These ideas are captured by the following definition.

\begin{definition}[Base of stratified institution]
A \emph{base for a stratified institution $\SI$} is an institution
morphism $(\Phi,\alpha,\beta) \co \SI^\sharp \to B$.
A base is \emph{shared} when for each signature $\Sigma$, each
$\Sigma$-model $M$ of $\SI$, and any $w,w' \in
\sem{M}_{\Sigma}$  we have that $\beta_\Sigma (M,w) =
\beta_\Sigma (M,w')$. 
\end{definition}

\begin{example}[Base for $\MPL$]
  \label{base0}
\begin{rm}
For the stratified institutions that are based on some form of Kripke
semantics we may consider $\B$ to be the institution that at the
syntactic level removes from $\SI$ all syntactic entities that
involve modalities, and whose models are the individual ``worlds'' of
the respective Kripke semantics. 
For instance, in the case of $\MPL$:
\begin{itemize}

\item $\B = \PL$ and $\Phi$ is the identity functor on $\Set$, 

\item $\alpha_P$ is the inclusion $\Sen^{\PL} (P) \subseteq
  \Sen^{\MPL} (P)$, 

\item $\beta_P (M,w) = M^w$, etc. 

\end{itemize}
\end{rm}
\end{example}

\begin{example}[Shared base for $\MFOL$]
  \label{base1}
\begin{rm}
For the stratified institutions with Kripke semantics based on first
order models with some form of sharing, $\B$ may remove even more
structure from $\SI$ such that at the semantic level $\beta$ maps a
Kripke structure to the respective shared underlying domain.
For instance, if $\SI$ is $\MFOL$:
\begin{itemize}

\item $\B$ is $\SETC$, i.e. the sub-institution of $\FOL$ induced by
  signatures that contain only constants, $\Phi$ removes from the 
  signatures the predicates and the non-constant operations, 

\item $\alpha$ is empty (as $\SETC$ does not contain any sentences),
  and 

\item $\beta_{(F,P)} (M,w) = (|M|, (M_c)_{c\in F_0})$ where $|M|$ is the
  shared underlying domain of $M$,  and each $M_c$ is the shared
  interpretation of the constant $c$.  

\end{itemize} This has been an example of a shared base.
\end{rm}
\end{example}

\begin{example}[Non-shared base for $\MFOL$]
When we allow more structure for $\B$ we obtain another relevant base
for $\MFOL$.
\begin{itemize}

\item We let $\B = \FOL$; then $\Phi$ is identity.

\item $\alpha$ consists of the canonical inclusions of sets of
  sentences. 

\item $\beta_{(F,P)} (M,w) = M^w$. 
  
\end{itemize}
\end{example}

All examples of Section \ref{ex-sect} admit various bases in the
manner of the previous couple of examples.
For instance $\HHPL$ may admit $\HPL$ as a base.


\subsection{The adjunction between stratified and ordinary
  institutions}

The representation of stratified institutions as ordinary institutions
given by the flattening of Fact \ref{sharp-institution-fact} is part
of an adjunction which is instrumental for defining the main concept
introduced in this work, that of decompositions of stratified
institutions. 

Let $\INS$ be the category of institution morphisms and $\SINS$ be the
category of strict stratified institution morphisms.

\begin{proposition}
Let $(\_)^\sharp \co \SINS \to \INS$ be the canonical extension of the
mapping $\SI \mapsto \SI^\sharp$ defined in Fact
\ref{sharp-institution-fact}.
Then $(\_)^\sharp$ has a right adjoint which we denoted as $\wt{(\_)}
\co \INS\to \SINS$. 
\end{proposition}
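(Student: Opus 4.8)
The plan is to establish the adjunction by producing, for every ordinary institution $B$, a couniversal arrow from $(\_)^\sharp$ to $B$: an object $\wt B \in \SINS$ together with an institution morphism $\epsilon_B \co (\wt B)^\sharp \to B$ (the prospective counit) such that every institution morphism $f \co \SI^\sharp \to B$ factors as $g^\sharp;\epsilon_B$ for a \emph{unique} strict stratified institution morphism $g \co \SI \to \wt B$. This is equivalent to a bijection $\INS(\SI^\sharp,B)\cong\SINS(\SI,\wt B)$ natural in $\SI$, from which the right adjoint and its functoriality follow formally. Since a morphism $\SI^\sharp\to B$ and a morphism $\SI\to\wt B$ have the same signature functor $\Phi\co\Sign^\SI\to\Sign^B$ and, once I set $\Sign^{\wt B}=\Sign^B$ and $\Sen^{\wt B}=\Sen^B$, the same sentence component $\alpha$, the whole content of the bijection lives in the model components: I must match natural transformations $\gamma\co\Mod^{\SI^\sharp}\Ra\Phi^{\op};\Mod^B$ with strict lax transformations $\beta\co\Mod^\SI\Ra\Phi^{\op};\Mod^{\wt B}$ satisfying $\beta;\Phi^{\op}\sem{\_}^{\wt B}=\sem{\_}^\SI$.

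First I would fix the parts of $\wt B$ that are forced. A $\wt B$-model over $\Delta$ should carry a state set $W$ together with a family $m\co W\to|\Mod^B(\Delta)|$ of underlying $B$-models, with stratification $\sem{(W,m)}_\Delta=W$ and parameterised satisfaction $(W,m)\models^w\rho$ defined as $m(w)\models^B_\Delta\rho$; the preservation of satisfaction under isomorphisms and the satisfaction condition \eqref{strat-sat-cond-eq} then reduce directly to the satisfaction condition of $B$. The counit $\epsilon_B$ is the identity on signatures and sentences, with model component sending a $(\wt B)^\sharp$-model $((W,m),w)$ to $m(w)$; its naturality and satisfaction condition are immediate. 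Given $f=(\Phi,\alpha,\gamma)$ I would define $g=(\Phi,\alpha,\beta)$ by $\beta_\Sigma M=(\sem{M}^\SI_\Sigma,\;w\mapsto\gamma_\Sigma(M,w))$, and conversely recover $\gamma$ from $\beta$ by $\gamma_\Sigma(M,w)=m^{\beta_\Sigma M}_{\phantom{.}}(w)$, i.e.\ by composing with $\epsilon_B$. Under either translation the two Satisfaction Conditions collapse to the \emph{same} statement, namely $M\models^w_\SI\alpha_\Sigma\rho$ iff $\gamma_\Sigma(M,w)\models^B_{\Phi\Sigma}\rho$, so the satisfaction requirements correspond automatically; likewise $\sem{\beta_\Sigma M}_{\Phi\Sigma}=\sem{M}^\SI_\Sigma$ gives the triangle condition on $\beta$ for free.

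The hard part will be the definition of the \emph{reduct functor} $\Mod^{\wt B}(\varphi)$ and the \emph{lax} stratification $\sem{\_}^{\wt B}$, i.e.\ getting the cofree lax structure right. The naturality of $\gamma$ forces, for $\varphi\co\Sigma\to\Sigma'$, the identity $\gamma_\Sigma(\Mod^\SI(\varphi)M',\sem{M'}^\SI_\varphi w')=\Mod^B(\Phi\varphi)\,\gamma_{\Sigma'}(M',w')$, and matching this against strict naturality of $\beta$ requires that the $\Phi\varphi$-reduct of $\beta_{\Sigma'}M'$ in $\wt B$ have state set $\sem{\Mod^\SI(\varphi)M'}^\SI_\Sigma$ and state map exactly $\sem{M'}^\SI_\varphi$. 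Crucially this shows $\wt B$ \emph{cannot} be strict: a strict morphism from a genuinely lax stratified institution such as $\OFOL$ (Example~\ref{ofol-ex}), whose reduct maps $\sem{M'}_\varphi$ are the non-identity restrictions $a\mapsto a|_X$, can only exist if the reduct in $\wt B$ itself changes the state set. Hence a $\wt B$-model must record reduct-state data, not merely the pair $(W,m)$; the reduct maps are not recoverable from $(W,m)$ alone, as the naive ``quotient by equal reduced $B$-model'' already fails to reproduce an arbitrary $\sem{M'}_\varphi$. My intended remedy is to let a $\wt B$-model over $\Delta$ carry a coherent system of states and state-maps along all reducts into $\Delta$, organised as a lax cone over the slice $\Sign^B\!\downarrow\!\Delta$ lifting $\Mod^B$, so that $\sem{\_}^{\wt B}$ becomes a bona fide lax natural transformation realising arbitrary reduct-state maps.

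The delicate point to get right is then \emph{uniqueness}: I must cut the recorded reduct data down to exactly what the strictness of $\beta$ and the compatibility $\sem{\beta_\Sigma M}_{\Phi\varphi}=\sem{M}^\SI_\varphi$ pin down, so that $\beta$ is determined by its ``diagonal'' values $m^{\beta_\Sigma M}(w)=\gamma_\Sigma(M,w)$ and no spurious freedom survives over reducts outside the image of $\Phi$. Verifying that the proposed $\Mod^{\wt B}$ is functorial, that $\sem{\_}^{\wt B}$ satisfies the compositionality \eqref{strat1} and naturality \eqref{diag1}, and that the assignment $f\mapsto g$ is a genuine bijection natural in $\SI$, is where the real work lies; once this is done, functoriality of $\wt{(\_)}$ and the unit, together with the triangle identities, are obtained by the standard universal-property bookkeeping. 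I expect the construction of this cofree lax stratification, and the bijectivity (existence for lax $\SI$ together with uniqueness), to be the main obstacle, whereas the signature, sentence, satisfaction, and counit components are routine.
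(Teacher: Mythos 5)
Your first two paragraphs reconstruct exactly the paper's own proof: $\wt{\B}$-models over $\Delta$ are pairs $(W, B \co W \to |\Mod^{\B}(\Delta)|)$ with pointwise satisfaction, reducts act on the family and leave $W$ untouched, the stratification is strict, the counit is the evaluation $((W,B),w) \mapsto B^w$ with identities on signatures and sentences, and the correspondence is the forced assignment $\wt{\beta}_\Sigma M = (\sem{M}_\Sigma, (\beta_\Sigma (M,w))_{w \in \sem{M}_\Sigma})$, under which the two Satisfaction Conditions collapse into one. Had you stopped there and completed the routine verifications, you would have been done.

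The gap is that you then discard this construction. Your observation that a strict morphism from a properly lax stratified institution into a strict one cannot exist is correct (and it does touch a point the paper glosses over when it ``skips'' checking that $\wt{\beta}$ is strict: that check uses strictness of $\SI$). But the conclusion you draw --- that $\wt{\B}$ must be laxified and made to record reduct-state data over slice categories --- is not a repair, because no construction can succeed once lax objects are admitted into $\SINS$. Concretely, let $\B_1$ be the institution with one signature $*$, no sentences and a single model; then $\INS(\OFOL^\sharp, \B_1)$ is nonempty, yet for \emph{any} stratified institution $\mathcal{R}$ there is no strict morphism $\OFOL \to \mathcal{R}$: the signature functor $\Phi$ is necessarily constant, so for the $\OFOL$ signature morphism $\varphi \co (\Sigma,X) \to (\Sigma,X')$ with $X \subsetneq X'$ (whose model reduct functor is the identity) strict naturality of $\beta$ forces $\beta_{(\Sigma,X)} = \beta_{(\Sigma,X')}$, and then the condition $\beta ; \Phi^{\op}\sem{\_}^{\mathcal{R}} = \sem{\_}^{\OFOL}$ forces $|M|^X = \sem{M}_{(\Sigma,X)} = \sem{M}_{(\Sigma,X')} = |M|^{X'}$, which is false for $|M| \geq 2$. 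Hence, on the reading you adopted, the right adjoint does not exist at all; the proposition is to be read (as the paper proves and later uses it) over strict stratified institutions, where your own computation shows $\wt{\beta}$ is a strict natural transformation and the bijection $\INS(\SI^\sharp,\B) \cong \SINS(\SI,\wt{\B})$ holds. This is exactly consistent with Fact \ref{strict-decomp} and with the paper's remark that the lax institutions of Example \ref{ofol-ex} admit no decompositions. As submitted, your proof is therefore incomplete at its core: the construction you finally commit to is never verified (functoriality, the lax coherences, and bijectivity are all deferred), and the ``spurious freedom'' over reduct signatures outside the image of $\Phi$ that you flag is not a delicate point to be managed but a genuine obstruction to bijectivity.
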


\begin{proof}
For any institution $\B$ we define the following stratified
institution $\wt{\B}$:
  \begin{itemize}\itemsep=1pt

  \item $\Sign^{\widetilde{\B}} = \Sign^{\B}$ and
    $\Sen^{\widetilde{\B}} = \Sen^{\B}$, 

  \item $|\Mod^{\widetilde{\B}} (\Sigma)| = \{ (W, B \co W \to
    |\Mod^{\B}(\Sigma)|) \mid W \text{ set} \}$,

  \item $\Mod^{\widetilde{\B}} (\Sigma) ((W,B), (V,N))$ consists of $h
    = (h_0 \co W \to V, (h^w \co B^w \to N^{h_0 w})_{w\in W})$,

  \item for each signature morphism $\varphi \co \Sigma \to \Sigma'$
    and each $\Sigma'$-model $(W',B')$:\\
    $\Mod^{\widetilde{\B}} (\varphi)(W',B') = (W',B';\Mod^\B (\varphi))$,

  \item $\sem{W,B}^{\widetilde{\B}}_\Sigma = W$ and $\sem{h}_\Sigma =
    h_0$,

  \item $\sem{\_}_\varphi$ are identities, and 

  \item $(W,B) (\models^{\widetilde{\B}}_\Sigma)^w \rho$ if and only
    if $B^w \models^\B_\Sigma \rho$. 
    
  \end{itemize}
  The proof that $\wt{\B}$ is a stratified institution consists of
  straightforward verifications. 
  Let us do only the Satisfaction Condition:
  \[
    \begin{array}{rll}
      (W',B') \models^w \alpha \rho & \text{iff }
            B'^w \models \alpha \rho & \text{by definition} \\
      & \text{iff } \Mod^\B (\varphi)B'^w \models \rho &
        \text{by the Satisfaction Condition in } \B \\
      & \text{iff }(W',B';\Mod(\varphi)) \models^w \rho &                             \text{by definition}\\                 
      & \text{iff } \Mod^{\widetilde{\B}} (\varphi) (W',B')
        \models^w \rho & \text{by definition.}
    \end{array}
  \]

  Then $\wt{(\_)}$ extends canonically to a functor $\INS\to\SINS$.
  In order to prove that this is a right adjoint to $(\_)^\sharp$ we
  first define the co-unit of the adjunction as follows.
  For each institution $\B$ we let the institution morphism
  $\varepsilon_{\B} \co \wt{\B}^\sharp \to \B$ have identities for the
  signature and sentence translation functors and maps each
  $\wt{\B}^\sharp$ $\Sigma$-model $((W,(B^w)_{w\in W}),w)$ to $B^w$.
  Then we prove the universal property of $\wt{\B}$, namely that for
  each base $(\Phi,\alpha,\beta) \co \SI^\sharp \to \B$ there exists
  an unique strict stratified institution morphism
  $(\Phi,\alpha,\wt{\beta}) \co \SI \to \wt{\B}$ such that the
  following diagram commutes:
  \begin{equation}\label{adjoint-diag}
    \xymatrix{
      \B & & \wt{\B}^\sharp \ar[ll]_{\varepsilon_{\B}} & & & \wt{\B} \\
      & \SI^\sharp \ar[ul]^{(\Phi,\alpha,\beta)}
      \ar[ur]_{(\Phi,\alpha,\wt{\beta})^\sharp} & & & \SI
      \ar[ur]_{(\Phi,\alpha,\wt{\beta})} 
    }
  \end{equation}
  Because the signature and the sentences translation functors of
  $\varepsilon_{\B}$ are identities there is no other choice for the
  signature and the sentence translation functors of
  $(\Phi,\alpha,\wt{\beta})$.
  By the commutativity \eqref{adjoint-diag} it follows that the
  definition of $\wt{\beta}$ is constrained to
  \[
  \widetilde{\beta}_\Sigma M = (\sem{M}_\Sigma (\beta_\Sigma
  (M,w))_{w\in\sem{M}_\Sigma}). 
  \]
  We may skip a few straightforward things related to establishing
  that $(\Phi,\alpha,\wt{\beta})$ is indeed a strict stratified
  institution morphism and only show its Satisfaction Condition:
  \[
    \begin{array}{rll}
      \widetilde{\beta}M \models^w \rho & \text{iff }
            \beta (M,w) \models \rho & \text{by definition} \\
      & \text{iff } (M,w) \models \alpha \rho &
        \text{by the Satisfaction Condition in } \SI^\sharp \\
      & \text{iff }M \models^w \alpha \rho & \text{by definition}.
    \end{array}
  \]  
\end{proof}

As a matter of notation, in what follows, for any base
$(\Phi,\alpha,\beta) \co \SI^\sharp \to \B$ we will denote its
correspondent through the natural isomorphism
$\INS(\SI^\sharp, \B) \cong \SINS(\SI,\wt{\B})$ by
$(\Phi,\alpha,\wt{\beta})$. 

\subsection{Decompositions of stratified institutions}

In many Kripke semantics examples the models are subject to certain
constraints, such as for instance the sharing constraints discussed in
Example \ref{mfol-ex} ($\MFOL$) or in Example \ref{hhpl-ex} ($\HHPL$).
Such constraints are treated abstractly in the following definition as
a sub-functor of the model functor of $\wt{\B}$. 

\begin{definition}[Decomposition of stratified institution]
  \label{decomp-def}
  Let $\SI$ be a stratified institution and
  $(\Phi,\alpha,\beta) \co S^\sharp \to \B$ be a base for $\SI$.
  Let $\Mod^C \subseteq \Mod^{\widetilde{\B}}$ be a sub-functor such
  that for each signature $\Sigma$,
  \[
    \wt{\beta}_\Sigma (\Mod^{\SI} (\Sigma)) \subseteq \Mod^C
    (\Phi\Sigma),
  \]
  refereed to as the \emph{constraint model sub-functor}. 
  Let $\widetilde{\B}^C$ denote the stratified sub-institution of
  $\widetilde{\B}$ induced by $\Mod^C$. 
  A \emph{decomposition of $\SI$} consists of two stratified
  institution morphisms like below
\[
  \xymatrix @C+3em {
  \SI^0 & \ar[l]_{(\Phi^0,\alpha^0,\beta^0)} \SI
  \ar[r]^{(\Phi,\alpha,\widetilde{\beta})}& \widetilde{\B}^C  
  }
\]
such that for each $\SI$-signature $\Sigma$
\[
  \xymatrix @C+2em {
  \Mod^0 (\Phi^0 \Sigma) \ar[dr]_{\sem{\_}^0_{\Phi^0 \Sigma}} &
  \Mod^\SI (\Sigma) \ar[l]_{\beta^0_\Sigma} 
  \ar[r]^{\widetilde{\beta}_\Sigma}
  \ar[d]^{\sem{\_}^\SI_\Sigma} & \Mod^C (\Phi \Sigma)
  \ar[dl]^{\sem{\_}^{\widetilde{\B}}_{\Phi\Sigma}} \\ 
   & \Set & 
  }
\]  
is a pullback in $\CAT$.
\end{definition}
The following important property follows from Fact \ref{strict-fact}
because $\widetilde{\B}$ is strict and so is $\widetilde{\B}^C$. 

\begin{fact}\label{strict-decomp}
Any stratified institution that admits a decomposition is strict.
\end{fact}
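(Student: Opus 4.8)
The plan is to invoke Fact~\ref{strict-fact} directly. Recall that a decomposition of $\SI$ provides, among its data, a strict stratified institution morphism $(\Phi,\alpha,\widetilde{\beta}) \co \SI \to \widetilde{\B}^C$. Thus I would first observe that the mere \emph{existence} of a decomposition yields a stratified institution morphism from $\SI$ into $\widetilde{\B}^C$, so the statement reduces to showing that $\widetilde{\B}^C$ is strict and then propagating strictness backwards along the morphism.

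First I would verify that $\widetilde{\B}$ is strict. This is immediate from its construction in the proof of the preceding proposition: there the data specify explicitly that $\sem{\_}_\varphi$ are identities, which is precisely the defining condition for $\sem{\_}^{\widetilde{\B}}$ to be a strict rather than merely lax natural transformation. Hence $\widetilde{\B}$ is a strict stratified institution. Next I would note that $\widetilde{\B}^C$ is the stratified sub-institution of $\widetilde{\B}$ induced by the constraint model sub-functor $\Mod^C \subseteq \Mod^{\widetilde{\B}}$; since the stratification maps $\sem{\_}_\varphi$ of $\widetilde{\B}^C$ are simply the restrictions of those of $\widetilde{\B}$ to the submodels in $\Mod^C$, they remain identities. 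Therefore $\widetilde{\B}^C$ is strict as well.

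Finally I would apply Fact~\ref{strict-fact} to the morphism $(\Phi,\alpha,\widetilde{\beta}) \co \SI \to \widetilde{\B}^C$: that fact asserts that for any morphism of stratified institutions $\SI' \to \SI$, if the target is strict then so is the source. Instantiating with source $\SI$ and target $\widetilde{\B}^C$ (which we have just shown to be strict), we conclude that $\SI$ is strict, which is exactly the claim.

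I do not expect any genuine obstacle here; the statement is essentially a bookkeeping consequence of how the objects were set up. The only point requiring a moment of care is the second step, namely confirming that passing to the sub-institution $\widetilde{\B}^C$ preserves strictness. This is clear because strictness is a property of the stratification transformation $\sem{\_}_\varphi$, and restricting the model functor to a sub-functor does not alter the fact that these comparison maps are identities on the retained models. Everything else is a direct citation of the two facts already established in the excerpt.
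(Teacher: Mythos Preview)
Your proposal is correct and follows essentially the same argument as the paper, which simply remarks that the fact follows from Fact~\ref{strict-fact} because $\widetilde{\B}$ is strict and so is $\widetilde{\B}^C$. You have merely spelled out in detail the two points the paper leaves implicit, namely why $\widetilde{\B}$ is strict (by construction) and why strictness passes to the sub-institution $\widetilde{\B}^C$.
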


Let us note the following aspects emerging from Definition
\ref{decomp-def}.
\begin{itemize}

\item The models of $\SI$ can be represented as pairs of
  $\SI^0$-models and families of $\B$-models satisfying certain
  constraints (hence $\wt{\B}^C$ models) such that the ``worlds'' of
  the corresponding $\wt{\B}^C$ model constitutes the stratification
  of the corresponding $\SI^0$ model.
  This means that at the semantic level $\SI$ is completely determined
  by the two components of the decomposition. 

\item The situation at the syntactic level is different.
  The syntax (signatures and sentences) of each of the two components
  is represented in the syntax of $\SI$, but the latter is not
  completely determined by the former syntaxes.
  In other words $\SI$ may have signatures and sentences that do not
  originate from either of the two components.
  This is what Definition \ref{decomp-def} gives us.
  However while there are hardly any examples  / applications where
  any sentence comes from one of the two components, in many examples
  the signatures of $\SI$ are composed from the signatures of $\SI^0$
  and those from $\B$.
  In Lemma \ref{prod-signature} below we will provide a general such
  situation. 
  
\end{itemize}

\begin{example}[Decompositions of $\MPL, \HPL$]\label{dec1}
\begin{rm}
The sub-institutions $\REL$, $\BREL$, $\RELC$, $\BRELC$, $\SETC$ of
$\FOL$ can be regarded trivially as stratified institutions by letting
for each model $W$ and sentence $\rho$, $\sem{W} = |W|$ (i.e. the
underlying set of $W$) and $(W \models^w \rho) = (W \models \rho)$ for
each $w \in |W|$.
Under this perpective we let
\[
  \SI^0 =
  \begin{cases}
    \BREL, & \SI = \MPL, \\
    \BRELC, & \SI = \HPL. 
    \end{cases} 
  \]
  Then
  \begin{itemize}

  \item The functor $\Phi^0$ forgets / erases the sets $P$ of
    predicate symbols from the signature.

  \item $\alpha^0_\Sigma$ are empty in the case of $\MPL$ (because
    $\BREL$ does not have sentences) and is defined by
    \[
      \alpha^0_{(\Nom,P)} \lambda(i,j) = @_i \Diamond j
      (= @_i \neg \Box \neg j).
    \]
    for the atoms and then for any sentence by induction  on the
    structure of the respective sentence such that
    $\alpha^0_{(\Nom,P)}$ commutes with the connectives (Boolean and
    quantifiers).

  \item $\beta^0_{(\Nom,P)} (W,M) = W$.

  \item The Satisfaction Condition of $(\Phi^0, \alpha^0, \beta^0)$
    can be verified easily by induction on the structure of the
    sentences.  
    
  \end{itemize}

  The bases of $\SI$ are those of Example \ref{base0}, i.e. $\B =
  \PL$, etc.
  In both situations the model constraint model sub-functor $\Mod^C$
  is just $\Mod^{\widetilde{\B}}$, so it is not a proper sub-functor.
  With respect to the $R$, $S4$, \etc \ variants of $\MPL$ and $\HPL$, 
  in these cases $\Mod^0 (\Sigma)$ is restricted to those
  (sub-)categories of relations that satisfy the respective
  constraints. 
\end{rm}
\end{example}

\begin{example}[Decompositions of $\MFOL, \HFOL$]\label{dec2}
\begin{rm}
  The decompositions of $\MFOL$ and $\HFOL$ parallel the
  decompositions of $\MPL$ and $\HPL$, respectively, by
  replacing $\PL$ with $\FOL$ in Example \ref{dec1}.
  The only significant difference that needs a special mention is at
  the level of the constraint model sub-functor $\Mod^C$.
  In both $\MFOL$ and $\HFOL$, a model $(W,B)$ belongs to $\Mod^C
  (F,P)$ when $| B^w | = | B^v |$ and $B^w_c = B^v_c$ for all $w,v \in
  |W|$ and all $c \in F_0$.
  Likewise a model homomorphism $(h^0, (h^w)_{w\in |W|}) \co (W,B) \to
  (V,N)$ is such that $h^w = h^v$ for all $w,v \in |W|$. 
\end{rm}
\end{example}

\begin{example}[Decompositions of polyadic modalities stratified
  institutions]\label{dec3}
\begin{rm}
The cases of $\MMPL, \MMFOL, \MHPL, \MHFOL$ are similar to those of
Examples \ref{dec1} and \ref{dec2} by taking $\SI^0 = \REL$ when $\SI$
is non-hybrid and $\SI^0 = \RELC$ when $\SI$ is hybrid. 



  
\end{rm}
\end{example}

Apart of the stratified institutions of Example \ref{ofol-ex} which
are not strict all other stratified institutions given as examples in
Section \ref{ex-sect} admit decompositions in a similar manner as
in the examples of this section.
But Definition \ref{decomp-def} has a theoretical potential related to
the $\SI^0$ component that may generate situations much beyond Kripke
semantics in its common acceptations.
For instance we may consider $\SI^0$ to be an institution of algebras,
which will mean algebraic operations on the ``worlds'' in $\SI$
models.
To unleash the full potential of Definition \ref{decomp-def} in this
direction is an interesting topic of further investigation.

\subsubsection*{Implicit nominal structures via decompositions}
In the applications the eventual nominal structures of $\SI$ come from
$\SI^0$. 
The following fact clarifies mathematically this situation in a full
generality. 

\begin{fact}\label{extraction-import}
  Consider a decomposition of a stratified institution
  \[
  \xymatrix @C+3em {
  \SI^0 & \ar[l]_{(\Phi^0,\alpha^0,\beta^0)} \SI
  \ar[r]^{(\Phi,\alpha,\widetilde{\beta})}& \widetilde{\B}^C.  
  }
\]
Then any nominals extraction of $\SI^0$ induces canonically a
nominals extraction of $\SI$ by composition with $(\Phi^0,
\alpha^0, \beta^0)$. 
\end{fact}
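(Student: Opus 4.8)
The plan is to reduce the statement to the observation, recorded in the preliminaries, that a nominals extraction of a stratified institution $\mathcal{X}$ is \emph{precisely} a (lax) stratified institution morphism $\mathcal{X} \to \SETC$, together with the fact that stratified institution morphisms compose. Concretely, a nominals extraction of $\SI^0$ is by definition a morphism $(N,\emptyset,\Nm) \co \SI^0 \to \SETC$; the decomposition supplies the morphism $(\Phi^0,\alpha^0,\beta^0) \co \SI \to \SI^0$; and their composite in the category of stratified institution morphisms is a morphism $\SI \to \SETC$, which is again a nominals extraction, this time of $\SI$. So the whole argument is: identify, compose, re-identify. Note that only the left leg $(\Phi^0,\alpha^0,\beta^0)$ of the decomposition is used, as the statement asserts.

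First I would spell out the components of the composite using the component-wise composition of stratified institution morphisms recalled after Definition~\ref{strat-institution-morphism}. The signature functor is $\Phi^0;N \co \Sign^{\SI} \to \Sign^{\SETC}$. The sentence component is forced to be the empty natural transformation: since $\SETC$ has no sentences, the functor $(\Phi^0;N);\Sen^{\SETC}$ is the empty-set-valued functor, so there is a unique (empty) natural transformation from it into $\Sen^{\SI}$. The model component is the composite lax natural transformation $\beta^0;(\Phi^0)^{\op}\Nm \co \Mod^{\SI} \Ra (\Phi^0;N)^{\op};\Mod^{\SETC}$, obtained by whiskering $\Nm$ with $(\Phi^0)^{\op}$ and pasting onto $\beta^0$. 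That a composite of lax natural transformations is again lax is exactly what makes stratified institution morphisms a category, so no separate check is needed there.

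The only condition with any real content is compatibility with the stratifications, i.e. that the composite is a genuine nominals extraction of $\SI$: one must verify $\sem{\_}^{\SI} = (\beta^0;(\Phi^0)^{\op}\Nm);(\Phi^0;N)(\Mod^{\SETC} \Ra \SET)$. I would obtain this by chaining the two conditions already available. The defining condition of $\beta^0$ as a stratified institution morphism gives $\sem{\_}^{\SI} = \beta^0;(\Phi^0)^{\op}\sem{\_}^{\SI^0}$, and the defining condition of the nominals extraction $(N,\Nm)$ gives $\sem{\_}^{\SI^0} = \Nm;N(\Mod^{\SETC} \Ra \SET)$. Substituting the second into the first and using functoriality of whiskering (so that $(\Phi^0)^{\op}$ applied to $\Nm;N(\Mod^{\SETC} \Ra \SET)$ splits as $(\Phi^0)^{\op}\Nm$ followed by the $\Phi^0$-whiskering of $N(\Mod^{\SETC} \Ra \SET)$) yields exactly the desired identity. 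This chaining is the one genuine step, and it is the place where the two hypotheses actually meet; everything else is bookkeeping of whiskered composites. I do not expect a real obstacle, only the need to keep the whiskerings aligned -- by $\Phi^0$ on the signature side and by $(\Phi^0)^{\op}$ on the model side -- with the $\SET$-valued comparison $\Mod^{\SETC} \Ra \SET$.
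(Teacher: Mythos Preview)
Your proposal is correct and is exactly the argument the paper has in mind: the statement itself says the induced extraction arises ``by composition with $(\Phi^0,\alpha^0,\beta^0)$'', and the paper records no separate proof beyond this. Your unpacking simply makes explicit that a nominals extraction is a stratified institution morphism to $\SETC$ (the Fact immediately after Definition~\ref{nom-extraction}) and that such morphisms compose; the stratification-compatibility check you carry out is in fact already subsumed by the closure of stratified institution morphisms under composition, so it is a redundant (but harmless) verification.
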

In \cite{KripkeStrat} an implicit axiomatic approach to modalities is
introduced in a manner similar to Definitions \ref{nom-extraction} and
\ref{hybrid-strat-institution-dfn} and that is based on the concept of
``frame extraction''.
It is then possible to have a replica of Fact \ref{extraction-import}
for frame extractions. 

\section{Model amalgamation in stratified institutions}

In this section we study model amalgamation in the context of
stratified institutions.
We do this in two parts as follows:
\begin{enumerate}

\item We define a concept of model amalgamation specific to stratified
  institutions.

\item We develop a general result that builds the model amalgamation
  property in a stratified institution that admits a decomposition,
  from the model amalgamation properties of the two components. 

\end{enumerate}

\subsection{Concepts of model amalgamation in stratified institutions}

The following definition extends the concept of model amalgamation
\cite{sannella-tarlecki88,tarlecki86,jm-granada89,modalg,iimt,AlgStrucSpec,sannella-tarlecki-book},
etc., from ordinary institution theory to stratified institutions.
This introduces two concepts.
The first one represents just the ordinary institution theoretic
concept of model amalgamation formulated for stratified institutions
(it does not involve the stratification structure).
The second one is specific to stratified institutions. 

\begin{definition}[Model amalgamation]
  \label{stratified-amalgamation}
Consider a stratified institution $\SI$ and a commutative square of
signature morphisms like below:
\begin{equation}\label{signature-square}
  \xymatrix{
  \Sigma \ar[r]^{\varphi_1} \ar[d]_{\varphi_2} & \Sigma_1
  \ar[d]^{\theta_1} \\
  \Sigma_2 \ar[r]_{\theta_2} & \Sigma'
  }
\end{equation}
Then this square
\begin{itemize}

\item 
is a \emph{model amalgamation square} when for each $\Sigma_k$-model
$M_k$, $k= \ol{1,2}$ such that $\varphi_1 M_1 = \varphi_2 M_2$
there exists an unique $\Sigma'$-model $M'$ such that $\theta_k  M' =
M_k$, $k= \ol{1,2}$, and 

\item is a \emph{stratified model amalgamation square} when for each
  $\Sigma_k$-model $M_k$ and each $w_k \in \sem{M_k}_{\Sigma_k}$ , $k=
  \ol{1,2}$, such that $\varphi_1 (M_1) = \varphi_2 (M_2)$ and
  $\sem{M_1}_{\varphi_1} w_1 = \sem{M_2}_{\varphi_2} w_2$ there exists
  an unique $\Sigma'$-model $M'$ and an unique $w' \in
  \sem{N'}_{\Sigma'}$ such that $\theta_k M' =  M_k$ and
  $\sem{M'}_{\theta_k} w' = w_k$, $k= \ol{1,2}$. 

\end{itemize}
The model $M'$ is called the \emph{(stratified) amalgamation of $M_1$
  and $M_2$}.

When all pushout squares of signature morphisms are (stratified) model
amalgamation squares we say that $\SI$ is  \emph{(stratified)
  semi-exact}. 
\end{definition}

Definition \ref{stratified-amalgamation} can be extended to other
variants of model amalgamation in the literature.

The following straightforward fact reduces stratified model
amalgamation to ordinary model amalgamation. 

\begin{fact}
A commutative square of signature morphisms like
\eqref{signature-square} is a stratified model amalgamation square in 
$\SI$ if and only if it is a model amalgamation square in
$\SI^\sharp$.  
\end{fact}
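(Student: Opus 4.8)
The plan is to unwind both sides of the biconditional against the definition of $\SI^\sharp$ from Fact~\ref{sharp-institution-fact} and show that the two notions of amalgamation coincide term for term. The key observation is that the objects of $\Mod^\sharp(\Sigma)$ are precisely the pairs $(M,w)$ with $M\in|\Mod(\Sigma)|$ and $w\in\sem{M}_\Sigma$, and that the reduct along $\varphi$ in $\SI^\sharp$ is given by $\Mod^\sharp(\varphi)(M',w')=(\Mod(\varphi)M',\sem{M'}_\varphi w')$. So a $\Sigma_k$-model of $\SI^\sharp$ is exactly a pair $(M_k,w_k)$ with $M_k$ a $\Sigma_k$-model of $\SI$ and $w_k\in\sem{M_k}_{\Sigma_k}$, which is precisely the data appearing in the stratified amalgamation condition.

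First I would translate the hypotheses. The ordinary amalgamation square condition for $\SI^\sharp$ asks: given $(M_1,w_1)$ and $(M_2,w_2)$ with $\Mod^\sharp(\varphi_1)(M_1,w_1)=\Mod^\sharp(\varphi_2)(M_2,w_2)$, there is a unique $(M',w')$ with $\Mod^\sharp(\theta_k)(M',w')=(M_k,w_k)$. Expanding the reducts, the compatibility hypothesis $\Mod^\sharp(\varphi_1)(M_1,w_1)=\Mod^\sharp(\varphi_2)(M_2,w_2)$ says exactly that $\Mod(\varphi_1)M_1=\Mod(\varphi_2)M_2$ (i.e. $\varphi_1 M_1=\varphi_2 M_2$) together with $\sem{M_1}_{\varphi_1}w_1=\sem{M_2}_{\varphi_2}w_2$, which is word-for-word the compatibility hypothesis in the stratified amalgamation square. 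Likewise the conclusion $\Mod^\sharp(\theta_k)(M',w')=(M_k,w_k)$ unpacks to $\theta_k M'=M_k$ and $\sem{M'}_{\theta_k}w'=w_k$, matching the stratified conclusion.

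Next I would check that existence and uniqueness match on both sides. Existence is immediate since the data of an $\SI^\sharp$-model is literally a pair $(M',w')$ and the two reduct equations are the two defining conditions of the stratified amalgamation. For uniqueness, a pair $(M',w')$ is uniquely determined as an $\SI^\sharp$-model precisely when both its components are uniquely determined, so the uniqueness of $(M',w')$ in $\SI^\sharp$ is the same statement as the joint uniqueness of $M'$ and $w'$ in the stratified sense. Running the equivalences in both directions then gives the biconditional.

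I do not expect any serious obstacle here, which is consistent with the statement being labelled a \emph{straightforward} fact: the entire content is bookkeeping, matching the components of an $\SI^\sharp$-model reduct against the two conditions $\theta_k M'=M_k$ and $\sem{M'}_{\theta_k}w'=w_k$. The one point deserving a word of care is that the definition of the stratified amalgamation square writes the state witness as $w'\in\sem{N'}_{\Sigma'}$ while the model is named $M'$; I would read this as the evident typo for $w'\in\sem{M'}_{\Sigma'}$, since $w'$ must live in the stratification of the very model being amalgamated, and this is exactly what the second component of an $\SI^\sharp$-model supplies.
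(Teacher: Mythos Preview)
Your proposal is correct and is exactly the intended argument: the paper itself gives no proof, labelling the fact as ``straightforward,'' and your unwinding of the definition of $\Mod^\sharp(\varphi)$ to match the two components of the stratified amalgamation condition is precisely that straightforward verification. Your remark about the evident typo $N'$ for $M'$ is also well taken.
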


The following result provides a couple of conditions that are
sufficient for stratified model amalgamation.
However they fall short of being also necessary conditions.
In particular this situation shows that plain model amalgamation
cannot be derived  from the seemingly more refined concept of
stratified model amalgamation. 

\begin{proposition}\label{amg-prop}
A commutative square of signature morphisms like
\eqref{signature-square} is a stratified model amalgamation square if 
\begin{itemize}

\item 
  \[
    \xymatrix @C+1em {
    \Mod(\Sigma) & \Mod(\Sigma_1) \ar[l]_{\Mod(\varphi_1)} \\
    \Mod(\Sigma_2) \ar[u]^{\Mod(\varphi_2)} & \Mod(\Sigma')
    \ar[l]^{\Mod(\theta_2)} \ar[u]_{\Mod(\theta_1)}
    }
  \]
  is a pullback in $|\CAT|$, and 

\item for each $\Sigma'$-model $M'$
  \[
    \xymatrix @C+2em {
    \sem{\varphi(\theta M')}_\Sigma & \sem{\theta_1 M'}_{\Sigma_1}
    \ar[l]_{\sem{\theta_1 M'}_{\varphi_1}} \\ 
     \sem{\theta_2 M'}_{\Sigma_2} \ar[u]^{\sem{\theta_2
         M'}_{\varphi_2}} & \sem{M'}_{\Sigma'} 
    \ar[l]^{\sem{M'}_{\theta_2}} \ar[u]_{\sem{M'}_{\theta_1}} 
    }
  \]
  is a pullback in $\Set$. 
\end{itemize}
\end{proposition}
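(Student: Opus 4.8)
The plan is to read the two hypotheses as producing, respectively, the amalgamated model and the amalgamated state, and then to glue these together with their uniqueness. So, fix $\Sigma_k$-models $M_k$ and states $w_k \in \sem{M_k}_{\Sigma_k}$ ($k = \ol{1,2}$) with $\varphi_1 M_1 = \varphi_2 M_2 =: M$ and $\sem{M_1}_{\varphi_1} w_1 = \sem{M_2}_{\varphi_2} w_2 =: w$. I want to produce a unique pair $(M', w')$ with $\theta_k M' = M_k$ and $\sem{M'}_{\theta_k} w' = w_k$.

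First I would obtain the model $M'$ from the first hypothesis. Pullbacks in $\CAT$ are computed separately on objects and on morphisms, so the assumed pullback square of model categories restricts to a pullback of the underlying object sets in $\Set$. Applied to the pair $(M_1, M_2)$, which is compatible since $\varphi_1 M_1 = \varphi_2 M_2$, this yields a unique $\Sigma'$-model $M'$ with $\theta_1 M' = M_1$ and $\theta_2 M' = M_2$; this is precisely ordinary model amalgamation for the square.

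Next I would produce the state $w'$ by instantiating the second hypothesis at this particular $M'$. Because $\theta_k M' = M_k$ are genuine equalities of models, the corners $\sem{\theta_k M'}_{\Sigma_k}$ of that $\Set$-square coincide with $\sem{M_k}_{\Sigma_k}$, and its apex $\sem{\varphi(\theta M')}_\Sigma$ equals $\sem{M}_\Sigma$ (using $\varphi_1(\theta_1 M') = \varphi_1 M_1 = M$). The lax compositionality \eqref{strat1} identifies the two legs of the cospan as $\sem{M_1}_{\varphi_1}$ and $\sem{M_2}_{\varphi_2}$, while the projections of the pullback are $\sem{M'}_{\theta_1}$ and $\sem{M'}_{\theta_2}$. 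The hypothesis $\sem{M_1}_{\varphi_1} w_1 = \sem{M_2}_{\varphi_2} w_2$ says exactly that $(w_1, w_2)$ is a cone over this cospan, so the pullback property delivers a unique $w' \in \sem{M'}_{\Sigma'}$ with $\sem{M'}_{\theta_1} w' = w_1$ and $\sem{M'}_{\theta_2} w' = w_2$, as required.

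Finally, uniqueness of the whole pair $(M', w')$ splits along the two hypotheses: any competitor $(M'', w'')$ has $\theta_k M'' = M_k$, so $M'' = M'$ by the uniqueness in the first step, and then $\sem{M'}_{\theta_k} w'' = w_k$ forces $w'' = w'$ by the uniqueness in the second step. I expect the only delicate point to be the bookkeeping of the lax stratification transformation: one must check via \eqref{strat1} that the two legs of the $\Set$-square genuinely compose with the projections to $\sem{M'}$ along the diagonal $\varphi_1;\theta_1 = \varphi_2;\theta_2$, so that the compatibility assumption on $w_1, w_2$ is exactly the cone condition the pullback needs. Everything else is a routine transport of pullback universal properties, and, notably, the argument nowhere uses that the square \eqref{signature-square} is a pushout.
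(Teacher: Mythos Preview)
Your proof is correct and follows exactly the same two-step strategy as the paper: use the first hypothesis to obtain the unique amalgamated model $M'$, then instantiate the second hypothesis at this $M'$ to obtain the unique amalgamated state $w'$. The paper's own proof is extremely terse (essentially one sentence per step), whereas you spell out the identifications $\theta_k M' = M_k$ in the $\Set$-square and the uniqueness argument explicitly; your remark about \eqref{strat1} is harmless bookkeeping but not strictly needed, since the commutativity of the stratification square is already part of the pullback hypothesis and the cospan legs $\sem{\theta_k M'}_{\varphi_k}$ become $\sem{M_k}_{\varphi_k}$ by direct substitution.
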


\begin{proof}
Note that the first condition just says that \eqref{signature-square}
is a model amalgamation square.
We consider $M_1$, $w_1$, $M_2$ and $w_2$ like in the definition of
stratified model amalgamation.
Then we consider $M'$ to be the unique amalgamation of $M_1$ and $M_2$
and apply the second condition for $w_1$ and $w_2$. 
\end{proof}

Note that stratified model amalgamation implies the second condition
of Proposition \ref{amg-prop} (by considering $M_k = \theta_k (M')$)
but it does not technically imply the first condition. 

\begin{example}
\begin{rm}
When the stratification is strict then the concept of stratified model 
amalgamation collapses to that of (ordinary) model amalgamation.
For instance this is the case in $\MPL$, $\MFOL$, etc., where model
amalgamation can be thus established from the model amalgamation in
$\PL$, $\FOL$, etc. (see \cite{iimt}). 
\end{rm}
\end{example}

\begin{example}
\begin{rm}
In $\OFOL, \MOFOL, \HOFOL, \HMOFOL$ the stratification is a proper lax
natural transformation.
In all these examples ordinary model amalgamation and stratified model
amalgamation are different concepts.
Let us look in some detail into the $\OFOL$ case.
Let us consider a pushout square of $\FOL$ signature morphisms
\begin{equation}\label{amg-square2}
\xymatrix{
  \Sigma \ar[r]^{\varphi_1} \ar[d]_{\varphi_2} & \Sigma_1
  \ar[d]^{\theta_1} \\
  \Sigma_2 \ar[r]_{\theta_2} & \Sigma'
}
\end{equation}
and sets of variables $X, X_1, X_2, X'$ such that $X = X_1 \cap X_2$
and $X' = X_1 \cup X_2$.
Then
\[
\xymatrix{
  (\Sigma,X) \ar[r]^{\varphi_1} \ar[d]_{\varphi_2} & (\Sigma_1, X_1)
  \ar[d]^{\theta_1} \\
  (\Sigma_2, X_2) \ar[r]_{\theta_2} & (\Sigma', X')
}
\]
is a stratified model amalgamation square in $\OFOL$ because
\begin{itemize}

\item
it is an ordinary model amalgamation square since \eqref{amg-square2}
is a model amalgamation square in $\FOL$ as $\FOL$ is semi-exact
(according to the literature, eg. \cite{iimt}), and

\item for each $(\Sigma',X')$-model $M'$ (aka $\FOL$ $\Sigma'$-model)
  and each $a_k \co X_k \to |M_k|$, $k = \ol{1,2}$, such that $a_1 (x)
  = a_2 (x)$ for each $x\in X$, $a' \co X' \to |M'|$ defined by $a'(x)
  = a_k (x)$ when $x\in X_k$ is unique such that $\sem{M'}_{\theta_k}
  a = a_k$, $k = \ol{1,2}$.
  (Note that $|M_1| = |M_2| = |M'|$).
  Then  we apply Proposition \ref{amg-prop}. 

\end{itemize}
\end{rm}
\end{example}

\subsection{Model amalgamation by decomposition}

In this part we establish model amalgamation in decomposed stratified
institutions on the basis of the model amalgamation properties of the
components (Proposition \ref{amg-by-decomp}).
Although developed in a much more general theoretical framework this
result goes somehow in the same direction with a corresponding model
amalgamation study from \cite{ks}.
For instance both share the same practical goal of providing an easier 
route for establishing model amalgamation in concrete situations.

The following preliminary result shows how the model amalgamation in
the ``base'' stratified component can be reduced to model amalgamation
in the base institution. 

\begin{proposition}[Model amalgamation in $\wt{\B}$]
  \label{tilde-amg}
  Let $\B$ be any institution.
  Any model amalgamation square in $\B$ is a model amalgamation square
  in $\widetilde{\B}$ too.
\end{proposition}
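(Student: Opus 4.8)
The plan is to reduce amalgamation in $\widetilde{\B}$ to a fibrewise application of amalgamation in $\B$, exploiting the fact that the set component of a $\widetilde{\B}$-model is transported identically by reducts. Recall that a $\widetilde{\B}$-model over $\Sigma$ is a pair $(W,B)$ with $W$ a set and $B \co W \to |\Mod^\B(\Sigma)|$, and that $\Mod^{\widetilde{\B}}(\varphi)(W,B) = (W, B;\Mod^\B(\varphi))$; in particular $W$ is left unchanged and only the assigned $\B$-models are reduced, world by world.

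First I would fix a model amalgamation square in $\B$, written as in \eqref{signature-square}, and take $\widetilde{\B}$-models $(W_1,B_1)$ over $\Sigma_1$ and $(W_2,B_2)$ over $\Sigma_2$ with $\varphi_1(W_1,B_1) = \varphi_2(W_2,B_2)$. Unfolding the reduct formula, this single equality decomposes into $W_1 = W_2 =: W$ together with $\varphi_1 B_1^w = \varphi_2 B_2^w$ in $\B$ for every $w \in W$. Thus at each world $w$ the $\B$-models $B_1^w$ and $B_2^w$ are compatible over $\Sigma$, so the amalgamation property of the square in $\B$ yields a unique $\Sigma'$-model $B'^w$ with $\theta_k B'^w = B_k^w$, $k = \ol{1,2}$. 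Collecting these into $B' \co W \to |\Mod^\B(\Sigma')|$, $w \mapsto B'^w$, produces the candidate amalgamation $(W,B')$, and re-unfolding the reduct formula gives $\theta_k(W,B') = (W, B';\Mod^\B(\theta_k)) = (W_k,B_k)$ as required.

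For uniqueness I would note that any $\widetilde{\B}$-model $(V,C)$ over $\Sigma'$ with $\theta_k(V,C) = (W_k,B_k)$ forces $V = W$ and $\theta_k C^w = B_k^w$ for all $w$, so the fibrewise uniqueness in $\B$ gives $C^w = B'^w$, i.e. $(V,C) = (W,B')$. The argument is essentially bookkeeping; the only point that needs care---and the closest thing to an obstacle---is checking that the reduct equality splits cleanly into equality of the set components and pointwise $\B$-compatibility, which is exactly what makes the hypothesis of $\B$-amalgamation applicable at every world. Observe finally that Definition \ref{stratified-amalgamation} requires existence and uniqueness only at the level of objects, so no amalgamation of model homomorphisms enters the argument.
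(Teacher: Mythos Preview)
Your proof is correct and follows essentially the same approach as the paper's: unfold the reduct to see that $W_1 = W_2$ and that the $\B$-models agree worldwise over $\Sigma$, apply $\B$-amalgamation at each world to build $B'$, and conclude that $(W,B')$ is the unique amalgamation. Your treatment is in fact slightly more explicit than the paper's, since you spell out the uniqueness argument and note that only object-level amalgamation is at stake.
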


\begin{proof}
Let the commutative square of signature morphisms below be a model
amalgamation square in $\B$. 
\[
\xymatrix{
  \Sigma \ar[r]^{\varphi_1} \ar[d]_{\varphi_2} & \Sigma_1
  \ar[d]^{\theta_1} \\
  \Sigma_2 \ar[r]_{\theta_2} & \Sigma'
}
\]
Let $(W_k, B_k) \in |\Mod^{\widetilde{\B}} (\Sigma_k)|, k = \ol{1,2}$
such that $\varphi_1 (W_1, B_1) = \varphi_2 (W_2, B_2)$.
This means $W_1 = W_2$ and $\varphi_1 B_1^w = \varphi_2 B_2^w$ for
each $w \in W (= W_1 = W_2)$.
By the model amalgamation hypothesis in $|B$, for each $w\in W$ there
exists an unique $\Sigma'$-model $B'^w$ such that $\theta_k B'^w =
B_k^w, k = \ol{1,2}$.
This gives $B' \co W \to |\Mod^{\B} (\Sigma')|$.
Then $(W,B')$ is the unique amalgamation of $(W,B_1)$ and $(W,B_2)$ in
$\widetilde{\B}$. 
\end{proof}

In the ``base'' component of a decomposition we actually need model
amalgamation at the level of the constrained models.
The following definition provides a general condition that allows for
the model amalgamation established at the level of $\wt{\B}$ in
Proposition \ref{tilde-amg} to be transfered to $\wt{\B}^C$.
The example after Definition \ref{cstr-amg} illustrate how this may
function in concrete situations. 

\begin{definition}\label{cstr-amg}
  Let $\B$ be any institution.
  A constraint model sub-functor $\Mod^C \subseteq
  \Mod^{\widetilde{\B}}$ \emph{preserves amalgamation} when for any
  pushout square of signature morphisms
  \[
  \xymatrix{
  \Sigma \ar[r]^{\varphi_1} \ar[d]_{\varphi_2} & \Sigma_1
  \ar[d]^{\theta_1} \\
  \Sigma_2 \ar[r]_{\theta_2} & \Sigma'
  }
  \]
  and for any $\widetilde{\B}$ $\Sigma'$-model $(W,B')$,
  $\theta_k (W,B') \in |\Mod^C (\Sigma_k)|$, $k = \ol{1,2}$, implies
  $(W,B') \in |\Mod^C (\Sigma')|$. 
\end{definition}

The following transfer of model amalgamation from $\B$ to $\wt{\B}^c$
is an immediate consequence of Proposition \ref{tilde-amg} and of
Defintion \ref{cstr-amg}.

\begin{corollary}
If $\B$ is semi-exact and $\Mod^C \subseteq \Mod^{\wt{\B}}$ preserves
amalgamation then $\wt{\B}^C$ is semi-exact. 
\end{corollary}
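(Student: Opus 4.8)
The plan is to verify semi-exactness directly from its definition: every pushout square of signature morphisms must be a model amalgamation square in $\wt{\B}^C$. Since $\wt{\B}^C$ shares its signature category with $\B$ (and with $\wt{\B}$), a pushout square in $\wt{\B}^C$ is literally a pushout square of $\B$-signature morphisms, so I may fix one such square as displayed in Definition~\ref{cstr-amg} and argue about it throughout.

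First I would take two $\wt{\B}^C$-models $(W_1,B_1) \in |\Mod^C(\Sigma_1)|$ and $(W_2,B_2) \in |\Mod^C(\Sigma_2)|$ with $\varphi_1(W_1,B_1) = \varphi_2(W_2,B_2)$. Because $\Mod^C \subseteq \Mod^{\wt{\B}}$ is a sub-functor, these are in particular $\wt{\B}$-models and the reduct functors of $\wt{\B}^C$ are the restrictions of those of $\wt{\B}$; hence the compatibility of the $\varphi_k$-reducts holds equally in $\wt{\B}$. Now $\B$ is semi-exact, so the chosen pushout square is a model amalgamation square in $\B$, and by Proposition~\ref{tilde-amg} it is therefore a model amalgamation square in $\wt{\B}$. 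This produces a unique $\wt{\B}$ $\Sigma'$-model $(W,B')$ with $\theta_k(W,B') = (W_k,B_k)$ for $k = \ol{1,2}$.

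The decisive step is to show that this $\wt{\B}$-amalgamation is already a $\wt{\B}^C$-model. Here I would invoke the hypothesis that $\Mod^C$ preserves amalgamation: the square is a pushout and $\theta_k(W,B') = (W_k,B_k) \in |\Mod^C(\Sigma_k)|$ for $k = \ol{1,2}$, so Definition~\ref{cstr-amg} gives $(W,B') \in |\Mod^C(\Sigma')|$. For uniqueness, any rival $\wt{\B}^C$-model with the same $\theta_k$-reducts is also a $\wt{\B}$-model with those reducts, and so must equal $(W,B')$ by the uniqueness already obtained in $\wt{\B}$. Thus $(W,B')$ is the unique $\wt{\B}^C$-amalgamation, the square is a model amalgamation square in $\wt{\B}^C$, and $\wt{\B}^C$ is semi-exact. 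The only point carrying genuine content is the membership $(W,B') \in |\Mod^C(\Sigma')|$, which is exactly what the preservation hypothesis supplies; the remainder is bookkeeping about a sub-functor, which is why the result is billed as an immediate consequence of Proposition~\ref{tilde-amg} and Definition~\ref{cstr-amg}.
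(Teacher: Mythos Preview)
Your proposal is correct and is exactly the unpacking of what the paper calls ``an immediate consequence of Proposition~\ref{tilde-amg} and of Definition~\ref{cstr-amg}'': lift the amalgamation from $\B$ to $\wt{\B}$ via the proposition, then use the preservation hypothesis to place it in $\Mod^C$, with uniqueness inherited from $\wt{\B}$.
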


\begin{example}\label{share-mfol}
\begin{rm}
$\MPL$, $\HPL$ provide trivial cases for Definition \ref{cstr-amg}
because in both cases $\B = \PL$ and $\Mod^C = \Mod^{\wt{\B}}$.
The situation is different for $\MFOL, \HFOL, \HHPL$, etc.
As an example let us see how is it with $\MFOL$, this case being quite
emblematic for a whole class of examples.
In this case $\B = \FOL$. 

At the level of the underlying carriers the things are simple: since
$W$ is invariant when taking the reducts, i.e. $\theta_k (W,B') =
(W,B_k)$, and the same happens with the underlying carriers, i.e.
$|B'^w| = |B^w_k|$ for each $w \in W$,
\[
  |B'^w|= |B^w_k| = |B^v_k| = |B'^v|.
\]
For the interpretations of the constants let us consider a constant
$c'$ of the $\FOL$ signature $\Sigma'$.
By the pushout property of the square of signature morphisms there
exists $k \in \{ 1,2 \}$ and $c_k$ a constant of $\Sigma_k$ such that
$\theta_k c_k = c'$.
Hence for all $w,v \in W$ we have that
\[
B'^w_{c'} = B'^w_{\theta_k c_k} = (B^w_k)_{c_k} = (B^v_k)_{c_k} =
B'^v_{\theta_k c_k} = B'^v_{c'}.
\]
\end{rm}
\end{example}

The following is the main result of this section. 

\begin{proposition}[Model amalgamation by
  decomposition]\label{amg-by-decomp} 
Consider a decomposition of a stratified institution $\SI$
\[
\xymatrix @C+3em {
\SI^0 & \ar[l]_{(\Phi^0,\alpha^0,\beta^0)} \SI
\ar[r]^{(\Phi,\alpha,\widetilde{\beta})}& \widetilde{\B}^C  
}
\]
such that
\begin{enumerate}

\item $\SI^0$ is strict, 

\item $\Phi$ and $\Phi^0$ preserve pushouts, 

\item $\B$ and $\SI^0$ are semi-exact, and 

\item $\Mod^C$ preserves amalgamation.
  
\end{enumerate}
Then $\SI$ is semi-exact too. 
\end{proposition}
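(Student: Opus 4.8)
The plan is to show that every pushout square of $\SI$-signatures is a model amalgamation square, by amalgamating separately in the two components of the decomposition and then gluing the results along the shared stratification. First I would invoke Fact~\ref{strict-decomp} to note that $\SI$ is strict. Consequently every $\sem{\_}_\varphi$ is an identity, so in Definition~\ref{stratified-amalgamation} the hypothesis $\sem{M_1}_{\varphi_1}w_1=\sem{M_2}_{\varphi_2}w_2$ collapses to $w_1=w_2$ and the world $w'$ demanded of the amalgamation is forced to equal this common world; thus a stratified model amalgamation square is nothing more than an ordinary, object-level model amalgamation square, and since we are after semi-exactness it suffices to treat the latter. This is exactly the situation in which the second, stratification condition of Proposition~\ref{amg-prop} is automatic, so the whole content lies at the model level.

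Next I would manufacture a model amalgamation square in each component. Applying $\Phi^0$ and $\Phi$ to the given signature pushout yields pushout squares in $\Sign^{\SI^0}$ and in $\Sign^{\B}$, since both functors preserve pushouts (hypothesis~2). Semi-exactness of $\SI^0$ (hypothesis~3) then turns the first into a model amalgamation square. For the base side, semi-exactness of $\B$ (hypothesis~3) gives a model amalgamation square in $\B$; this lifts to $\widetilde{\B}$ by Proposition~\ref{tilde-amg}, and because $\Mod^C$ preserves amalgamation in the sense of Definition~\ref{cstr-amg} (hypothesis~4) the amalgamated model stays inside $\Mod^C$, so the corresponding square is a model amalgamation square in $\widetilde{\B}^C$ as well.

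I would then glue through the defining pullback of the decomposition (Definition~\ref{decomp-def}): for each signature $\Delta$ the category $\Mod^\SI(\Delta)$ is the fibre product of $\Mod^{\SI^0}(\Phi^0\Delta)$ and $\Mod^C(\Phi\Delta)$ over $\Set$ along $\beta^0_\Delta$ and $\widetilde{\beta}_\Delta$. Writing $\Sigma_k$-models $M_k$ with $\varphi_1 M_1=\varphi_2 M_2$ as pairs $M_k=(M_k^0,\widehat{M}_k)$, where $M_k^0=\beta^0_{\Sigma_k}M_k$ and $\widehat{M}_k=\widetilde{\beta}_{\Sigma_k}M_k$ share the stratification $\sem{M_k}_{\Sigma_k}=W$, the agreement $\varphi_1 M_1=\varphi_2 M_2$ transports, via the two projections, to agreement of the $\SI^0$-coordinates and of the $\widetilde{\B}^C$-coordinates on their $\Sigma$-reducts. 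Amalgamating each coordinate by the two squares of the previous paragraph produces a unique $\Phi^0\Sigma'$-model $N^0$ and a unique $\Phi\Sigma'$-model $\widehat{N}$; strictness makes stratifications reduct-invariant, so $\sem{N^0}=\sem{\widehat{N}}=W$ and the pair $(N^0,\widehat{N})$ is a genuine object of $\Mod^\SI(\Sigma')$, namely the required amalgamation $M'$. Uniqueness is forced because any amalgamation reduces coordinatewise to $M_k^0$ and $\widehat{M}_k$, so its coordinates must be $N^0$ and $\widehat{N}$, and the pullback identifies a model with its pair of coordinates.

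The step I expect to be the main obstacle is the reduct-compatibility of the two projections, i.e.\ the assertion that the $\SI$-reduct of a pair is computed coordinatewise. For $\widetilde{\beta}$ this is clean, since $\widetilde{\beta}$ is strict and its naturality squares commute strictly. For $\beta^0$ one must use hypothesis~1: strictness of both $\SI^0$ and $\SI$ forces the stratification image of the lax naturality $2$-cell of $\beta^0$ to be an identity, which is precisely what keeps the two amalgamated coordinates sharing the stratification $W$ so that they glue through the pullback. Making this gluing rigorous amounts to checking that the fibre-product square over $\Set$ inherits the amalgamation property from the two coordinate squares --- a commutation-of-limits argument for the fibre products --- and this, together with the careful treatment of the possibly lax $\beta^0$, is where the real work lies; the remaining verifications are routine transport along the two stratified institution morphisms.
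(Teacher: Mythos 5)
Your proposal is correct and takes essentially the same route as the paper's own proof: apply $\Phi^0$ and $\Phi$ to the signature pushout, amalgamate the $\beta^0$- and $\widetilde{\beta}$-projections of the models in $\SI^0$ and in $\widetilde{\B}^C$ (the latter via Proposition \ref{tilde-amg} together with Definition \ref{cstr-amg}), use strictness to identify the stratifications of the two component amalgamations, and then glue through the pullback of Definition \ref{decomp-def}, with uniqueness inherited from the component amalgamations and the pullback property. The ``main obstacle'' you flag (laxness of $\beta^0$) is handled in the paper simply by invoking the naturality of $\beta^0$ on objects, exactly as you do in your third paragraph, so your extra discussion of the $2$-cells adds caution but no divergence in method.
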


\begin{proof}
Consider a pushout square of signature morphisms in $\SI$
\begin{equation}\label{pushout-square}
\xymatrix{
  \Sigma \ar[r]^{\varphi_1} \ar[d]_{\varphi_2} & \Sigma_1
  \ar[d]^{\theta_1} \\
  \Sigma_2 \ar[r]_{\theta_2} & \Sigma'
}
\end{equation}
and let $M_k \in |\Mod^\SI (\Sigma_k)|$, $k = \ol{1,2}$, such that
$\varphi_1 M_1 = \varphi_2 M_2$.
By relying on the preservation pushout condition we have that the
squares below 
\[
\xymatrix{
  \Sigma^0 \ar[r]^{\varphi^0_1}  \ar[d]_{\varphi^0_2} &
  \Sigma^0_1 \ar[d]^{\theta^0_1} & & \Phi\Sigma \ar[r]^{\Phi\varphi_1}
  \ar[d]_{\Phi\varphi_2} & \Phi\Sigma_1 \ar[d]^{\Phi\theta_1} \\
  \Sigma^0_2 \ar[r]_{\theta^0_2} & \Sigma'^0 & & \Phi\Sigma_2
  \ar[r]_{\Phi\theta_2} & \Phi\Sigma' 
}
\]
are pushout squares in $\Sign^0$ and $\Sign^{\B}$, respectively (where
the left-hand square above is the result of applying $\Phi^0$ to
\eqref{pushout-square}).
We let $M^0_k = \beta^0_{\Sigma_k} M_k$ and $(W_k, N_k) =
\widetilde{\beta}_{\Sigma_k} M_k$, $k = \ol{1,2}$.

Our plan is to obtain the amalgamation of $M_1$ and $M_2$ through the
pullback property on the category of models of the decomposition of
$\SI$ by joining together the amalgamations of $M^0_1$ and $M^0_2$ and
of $(W_1, N_1)$ and $(W_2, N_2)$. 
The first step is to establish the conditions for the two
amalgamations:
\begin{itemize}

\item In the case of $M^0_1$ and $M^0_2$ we have that
  \[
  \varphi^0_k M^0_k = \varphi^0_k (\beta^0_{\Sigma_k} M_k) =
  \beta^0_\Sigma (\varphi_k M_k) \ (\text{by the naturality of } \beta^0)
  \]
  Since by hypothesis $\varphi_1 M_1 = \varphi_2 M_2$ it follows that
  $\varphi^0_1 M^0_1 = \varphi^0_2 M^0_2$. 

\item A similar argument applies also to $(W_1, N_1)$ and $(W_2,
  N_2)$, by using the naturality of $\widetilde{\beta}$ and the
  assumption $\varphi_1 M_1 = \varphi_2 M_2$.  
  
\end{itemize}
Now, by the semi-exactness hypotheses, let $M'^0$ be the unique
amalgamation of $M^0_1$ and $M^0_2$, and $(W',N')$ be the unique
amalgamation of $(W_1, N_1)$ and $(W_2, N_2)$.
Note that in the case of the latter amalgamation we rely upon the
result of Proposition \ref{tilde-amg} and on the preservation of
amalgamation by $\Mod^C$ hypothesis.
Note also that since $\widetilde{\B}^C$ is strict we have that $W_1 =
W_2 = W'$.
We have that
\[
  \begin{array}{rlr}
    \sem{M'^0} & = \sem{(\Phi\theta_1)M'^0} & \SI^0 \ \text{strict} \\
               & = \sem{M^0_1} & \text{definition of} \ M^0_1 \\
               & = \sem{(W_1, N_1)} & \text{decomposition property of}
                                      \ M_1 \\ 
    & = W_1 = W'. & 
  \end{array}
\]
Hence $\sem{M'^0} = \sem{(W',N')}$ which allows us to apply the
pullback property of the model decomposition and define $M'$ to be the
unique $\Sigma'$-model such that $\beta^0_{\Sigma'} M' = M'^0$ and 
$\widetilde{\beta}_{\Sigma'} M' = (W',N')$.

We show that $M'$ is the amalgamation of $M_1$ and $M_2$.
On the one hand, by the naturality of $\beta^0$ and since $M'^0$ has
been defined as the amalgamation of $M^0_1$ and $M^0_2$ we have:
\begin{equation}\label{amg1}
\beta^0_{\Sigma_k} (\theta_k M') = \theta^0_k (\beta^0_{\Sigma'} M')
= \theta^0_k M'^0 = M^0_k. 
\end{equation}
On the other hand, by the naturality of $\widetilde{\beta}$ and since
$(W',N')$ is the amalgamation of $(W_1, N_1)$ and $(W_2, N_2)$ we
have:
\begin{equation}\label{amg2}
\widetilde{\beta}_{\Sigma_k} (\theta_k M') =
(\Phi\theta_k)(\widetilde{\beta}_{\Sigma'} M') = (\Phi\theta_k)(W',N')
= (W_k, N_k). 
\end{equation}
From \eqref{amg1} and \eqref{amg2}, by the uniqueness aspect of the
pullback property of the model decomposition, it follows that
$\theta_k M' = M_k$, $k = \ol{1,2}$.
The uniqueness of $M'$ follows by relying on the uniqueness of the
model amalgamation in both $\SI^0$ and $\widetilde{\B}^C$. 
\end{proof}

In many concrete examples the second condition of Proposition
\ref{amg-by-decomp} is established through corresponding instances of
the following lemma. 

\begin{lemma}\label{prod-signature}
  If the decomposition of the stratified institution has the property
  that 
  \[
    \xymatrix{
    \Sign^0 & \ar[l]_{\Phi^0} \Sign^{\SI}   \ar[r]^{\Phi}& \Sign^\B  
    }
  \]
  is a product in $\CAT$, then both $\Phi^0$ and $\Phi$ preserve
  pushouts.
  Moreover any pair of pushout squares of signatures, one from
  $\SI^0$ and the other one from $\B$, determine canonically a pushout
  square of $\SI$ signatures.
\end{lemma}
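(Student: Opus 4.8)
The plan is to reduce both assertions to a single standard fact about product categories: in a product $\mathbf{A}\times\mathbf{B}$, a commutative square is a pushout if and only if each of its two projections is a pushout, in $\mathbf{A}$ and in $\mathbf{B}$ respectively. The hypothesis is exactly that $(\Phi^0,\Phi)$ exhibits $\Sign^{\SI}$ as the product $\Sign^0\times\Sign^\B$, with $\Phi^0$ and $\Phi$ playing the role of the two projections (up to the canonical isomorphism), so this fact will apply verbatim.

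Granting the characterisation, both conclusions are immediate. For the first, if a square in $\Sign^{\SI}$ is a pushout then by the ``only if'' direction its $\Phi^0$-image and its $\Phi$-image are pushouts; that is precisely the statement that $\Phi^0$ and $\Phi$ preserve pushouts. For the second, I would first recall that by the universal property of a product of categories a diagram in $\Sign^0\times\Sign^\B$ of any fixed shape is the same datum as a pair of diagrams of that shape, one in each factor; in particular a commutative square in $\Sign^{\SI}$ amounts to a pair consisting of a square in $\Sign^0$ and a square in $\Sign^\B$. Hence a pushout square of $\SI^0$-signatures together with a pushout square of $\B$-signatures assemble canonically into one commutative square of $\SI$-signatures, and by the ``if'' direction this assembled square is a pushout. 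This is the canonical square asserted in the lemma.

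It therefore remains to prove the characterisation itself, which I would do by directly testing the universal property. Writing the square as $Y_1\leftarrow X\to Y_2$ with colimiting legs $Y_k\to Z$, and using that objects and morphisms of $\Sign^0\times\Sign^\B$ are pairs, the key observation is that a cocone under the square decomposes coordinatewise into a cocone in each factor, since the cocone equations hold in each coordinate separately. The ``if'' direction is then routine: given a competing cocone into a test object, one amalgamates the unique mediating morphisms obtained in the two factors into a single pair, which mediates upstairs, and uniqueness follows coordinatewise from uniqueness in each factor.

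The one point that genuinely requires care --- and which I expect to be the main, if modest, obstacle --- is the ``only if'' direction, namely that a projection of a pushout is again a pushout, since the factors are not assumed to admit pushouts in general. To verify, say, that the $\Phi^0$-projection has the pushout property against an arbitrary cocone $g_1,g_2$ into a test object $V$ of $\Sign^0$, I would lift this to a cocone in $\Sign^{\SI}$ by pairing $g_1,g_2$ with the $\Phi$-components of the legs $Y_k\to Z$ of the given square, targeting the object $(V,\Phi Z)$; the square's commutativity in the $\B$-coordinate guarantees compatibility. The mediating morphism supplied by the pushout in $\Sign^{\SI}$ then has $\Phi^0$-component factoring $g_1,g_2$, and uniqueness of that component is recovered by pairing any rival factorisation with the fixed $\Phi$-component of the mediating morphism and invoking its uniqueness upstairs. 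The argument for $\Phi$ is symmetric.
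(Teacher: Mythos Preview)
Your proposal is correct and is precisely the ``straightforward general categorical argument'' that the paper invokes without spelling out; you have supplied the details of the standard fact that colimits in a product category are computed componentwise, including the slightly delicate ``only if'' direction where no pushouts are assumed in the factors. There is nothing to compare approaches against, since the paper's own proof is a single sentence deferring to general category theory.
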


\begin{proof}
  By a straightforward general categorical argument.
\end{proof}

The following corollary provides an example of how the result of
Proposition \ref{amg-by-decomp} can be applied in order to obtain
model amalgamation properties in concrete stratified institutions.
It is rather comprehensive with respect to the conditions of
Proposition \ref{amg-by-decomp}.
Other such model amalgamation properties can be established in a
multitude of concrete stratified institutions (such as those given in
Section \ref{ex-sect}) in a similar manner.

\begin{corollary}
$\MMFOL$ is semi-exact. 
\end{corollary}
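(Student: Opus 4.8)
The plan is to derive this as a direct application of Proposition~\ref{amg-by-decomp} to the decomposition of $\MMFOL$ indicated in Example~\ref{dec3}: the one with $\SI^0 = \REL$, base institution $\B = \FOL$, and constraint model sub-functor $\Mod^C$ imposing the sharing of carriers and of the interpretations of the constants across the worlds (exactly as in the $\MFOL$ case of Example~\ref{dec2}). Accordingly, I would verify in turn the four hypotheses of that proposition and then read off the conclusion.

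The two easy hypotheses I would dispatch first. For condition~(1) I would observe that $\REL$, regarded as a stratified institution in the trivial way of Example~\ref{dec1}, is strict: its reducts leave the carrier sets untouched, so the maps $\sem{\_}_\varphi$ are identities. For condition~(3) I would invoke the semi-exactness of $\FOL$ from the literature (e.g. \cite{iimt}) and note that $\REL$ inherits semi-exactness, since its amalgamation is built carrier-wise exactly as in $\FOL$; being strict, its stratified semi-exactness coincides with the ordinary one, which is what condition~(3) asks of $\SI^0 = \REL$.

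The substantive work lies in conditions~(2) and~(4). For condition~(2) I would appeal to Lemma~\ref{prod-signature}: an $\MMFOL$ signature is a tuple $(\Lambda,(F,P))$ in which $\Phi^0$ reads the modality family $\Lambda = (\Lambda_n)_{n\in\omega}$ as a $\REL$ signature while $\Phi$ extracts the $\FOL$ signature $(F,P)$; since $\Sign^{\REL} \cong \Set^\omega$ and $\Sign^{\FOL} = \Set^\omega \times \Set^\omega$, the span $\Sign^{\REL} \xleftarrow{\Phi^0} \Sign^{\MMFOL} \xrightarrow{\Phi} \Sign^{\FOL}$ is a product in $\CAT$, so Lemma~\ref{prod-signature} yields preservation of pushouts by both $\Phi^0$ and $\Phi$. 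The point I expect to require the most care is condition~(4), that $\Mod^C$ preserves amalgamation in the sense of Definition~\ref{cstr-amg}; but this is already settled by the computation of Example~\ref{share-mfol}, where, for a pushout of $\FOL$ signature morphisms and a $\wt{\FOL}$ $\Sigma'$-model $(W,B')$ whose two reducts lie in $\Mod^C$, the sharing equalities $|B'^w| = |B'^v|$ and $B'^w_{c'} = B'^v_{c'}$ for all worlds $w,v$ and all constants $c'$ of $\Sigma'$ follow from the pushout property, whence $(W,B') \in |\Mod^C(\Sigma')|$. Since $\MMFOL$ has the same base $\FOL$ and the same sharing constraints as $\MFOL$, that argument transfers verbatim. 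With all four hypotheses in place, Proposition~\ref{amg-by-decomp} delivers the semi-exactness of $\MMFOL$.
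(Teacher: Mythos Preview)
Your proposal is correct and follows essentially the same approach as the paper: applying Proposition~\ref{amg-by-decomp} to the decomposition $\REL \leftarrow \MMFOL \to \widetilde{\FOL}^C$ and verifying the four hypotheses via strictness of $\REL$, Lemma~\ref{prod-signature} for the product structure of signatures, the known semi-exactness of $\FOL$ (and hence $\REL$), and Example~\ref{share-mfol} for preservation of amalgamation by $\Mod^C$. The only cosmetic difference is the order in which you check the conditions.
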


\begin{proof}
We apply Proposition \ref{amg-by-decomp} be performing a check on its
conditions as follows.
Step 0 consists of recalling from Example \ref{dec3} (see also
Examples \ref{dec1} and \ref{dec2}) the decomposition of $\MMFOL$.
This is
\[
  \xymatrix @C+3em {
\REL & \ar[l]_{(\Phi^0,\alpha^0,\beta^0)} \MMFOL
\ar[r]^{(\Phi,\alpha,\widetilde{\beta})}& \widetilde{\FOL}^C  
}
\]
where
\begin{itemize}

\item $\REL$ is considered as a stratified institution by letting
  $\sem{W} = |W|$, i.e. the underlying set of the $\REL$-model $W$.
  Note that because the $\REL$-signatures have only predicate symbols
  there are no $\REL$ sentences.
  This situation  would be different if instead of $\MMFOL$ we would
  consider $\MHFOL$ (see Example \ref{dec3}). 

\item The constraint model functor $\Mod^C$ that defines $\wt{\FOL}^C$
  is given by the sharing of the underlying sets and of the
  interpretations of the constants (see Examples \ref{poly-ex},
  \ref{mfol-ex}, \ref{share-mfol}). 
  
\end{itemize}
Now we focus on how the four conditions of Proposition
\ref{amg-by-decomp} hold. 
\begin{enumerate}

\item As stratified institution $\REL$ is a strict one because the
  reducts in $\REL$ preserve the underlying sets of the models. 

\item We apply Lemma \ref{prod-signature}.
  The signatures of $\MMFOL$ are indeed pairs $(\Lambda,(F,P))$ where
  $\Lambda$ is a $\REL$ signature and $(F,P)$ is a $\FOL$ signature;
  hence the product condition of Lemma \ref{prod-signature} is
  fulfilled. 

\item In the literature $\FOL$ is a classic example of a semi-exact
  institution (see \cite{modalg,iimt}, etc.) although usually $\FOL$
  is considered in its many-sorted form.
  For our single sorted variant it is just enough to note that the
  pushouts of single sorted $\FOL$ signatures are still single sorted,
  and thus the semi-exactness of single sorted $\FOL$ is inherited
  from the more general many sorted $\FOL$.
  The same argument applies to $\REL$ too, as $\REL$ is a fragment (or
  a sub-institution) of (single sorted) $\FOL$. 

\item This has essentially been established in Example
  \ref{share-mfol}. 
  
\end{enumerate}
\end{proof}

\section{Diagrams in stratified institutions}

In conventional model theory the method of diagrams is one of the most 
important methods.
The institution-independent method of diagrams plays a significant
role in the development of a lot of model theoretic results at the
level of abstract institutions, many of its applications being
presented in \cite{iimt}.
These include existence of co-limits of models, free models along
theory morphisms, axiomatizability results, elementary homomorphisms
results, filtered power embeddings results, saturated models results
(including an abstract version of Keisler-Shelah isomorphism theorem),
the equivalence between initial semantics and quasi-varieties,
Robinson consistency results, interpolation theory, definability
theory, proof systems, predefined types, etc.

In institution theory diagrams had been introduced for the first time
by Tarlecki in \cite{tarlecki86,tarlecki86s} in a form different from
ours. 
In the form presented here it has been introduced at the level of 
institution-independent model theory in \cite{edins} as a 
categorical property which formalizes the idea that
\begin{quote}
  the class of model homomorphisms from a model $M$ can be represented
  (by a natural isomorphism) as a class of models of a theory in a
  signature extending the original signature with syntactic entities
  determined by $M$.
\end{quote}
This can be seen as a coherence property between the
semantic and the syntactic structures of the institution.
By following the basic principle that a structure is rather defined by
its homomorphisms (arrows) than by its objects, the semantic
structure of an institution is given by its model homomorphisms.
On the other hand the syntactic structure of a(ny concrete)
institution is based upon its corresponding concept of atomic
sentence.

The goal of this section is twofold.
On the one hand we need to clarify the concept of diagrams in
\emph{stratified} institutions.
This is quite straightforward:
\begin{quote}
\emph{the diagrams in a stratified institution $\SI$ are the diagrams
  in $\SI^\sharp$.}
\end{quote}
On the other hand it is useful to have a general result on the
\emph{existence} of diagrams at the level of abstract stratified
institutions that would be applicable to a wide class of concrete
situations.
In this section we will develop such a result by reliance on
decompositions of stratified institutions.

The structure of the section is as follows:
\begin{enumerate}

\item We recall the established institution theoretic concept of
  diagrams. 

\item We introduce some preliminary technical concepts that will
  support the development of the main result of this section. 

\item We formulate and prove a general result on the existence of
  diagrams in stratified institutions.
  This comes in two versions: for $\SI^*$ and for $\SI^\sharp$ (where
  $\SI$ is a stratified institution).

\item By means of a (counter)example we show the necessity of the main
  specific technical condition underlying our result on the existence
  of diagrams, namely the specific infrastructure supporting
  nominals. 
  
\end{enumerate}

\subsection{A reminder of institution-theoretic diagrams}

Below we recall from \cite{edins,iimt} the main concept of the
institution theoretic method of diagrams.

\begin{definition}[The method of diagrams]
An institution $\I$ has \emph{diagrams}  when
for each signature $\Sigma$ and each $\Sigma$-model $M$, there exists
a signature $\Sigma_M$ and a signature 
morphism $\iota_\Sigma (M) \co \Sigma \ra \Sigma_M$, functorial in
$\Sigma$ and $M$, and a set $E_M$ of $\Sigma_M$-sentences such that
$\Mod(\Sigma_M, E_M)$ and the comma category $M/\Mod(\Sigma)$ are
naturally
isomorphic, i.e. the following diagram commutes by the isomorphism
$i_{\Sigma,M}$ that is natural in $\Sigma$ and $M$
\snvsp
\begin{equation}\label{diag-diag}
  \xymatrix{
\Mod(\Sigma_M, E_M) \ar[r]^{i_{\Sigma,M}}
\ar[rd]_{\Mod(\iota_{\Sigma}(M))} &
M/\Mod(\Sigma) \ar[d]^{\mbox{\emph{forgetful}}} \\ & \Mod(\Sigma) }
\end{equation}
The signature morphism $\iota_{\Sigma}(M) \co \Sigma \ra \Sigma_M$ is
called the \emph{elementary extension of $\Sigma$ via $M$} and the set
$E_M$ of $\Sigma_M$-sentences is called the
\emph{diagram} of the model $M$.

 
\end{definition}

In the institution theoretic literature, especially in \cite{iimt},
one can find a wealth of examples of systems of diagrams.
Below we remind two of the most common ones. 

\begin{example}[Diagrams in $\PL$]
\begin{rm}
For any $\PL$ signature $P$ and any $P$-model $M \in 2^P$ the
extension $\iota_{P,M}$ is just the identity function on $P$, while
$E_M = M$.
Then, that $N \in 2^P$ satisfied $E_M$ means just that $M \subseteq
N$; this gives $i_{P,M} N$. 
\end{rm}
\end{example}

\begin{example}[Diagrams in $\FOL$]
\begin{rm}
For any $\FOL$ signature $\Sigma = (F,P)$ and any $(F,P)$-model $M$
the extension $\iota_{(F,P),M}$ just adds the set of the elements
$|M|$ of $M$ as new constants to $F$.
The let $M_M$ be the expansion of $M$ along $\iota_{(F,P),M}$ that
interprets the new constants by themselves, i.e. $(M_M)_c = c$ for any
$c \in |M|$.
$E_M$ is defined as the set of the quantifier-free equations
satisfied by $M_M$.
For any $\Sigma_M$ model $N'$ that satisfied $E_M$, $i_{\Sigma,M} N'$
is the $(F,P)$-homomorphism $h \co M \to N$ defined by $h(x) = N'_x$,
where $N$ is the $\iota_{\Sigma,M}$-reduct of $N'$. 
\end{rm}
\end{example}

In order to keep the exposition technically simpler, for the rest of
this section we will ignore the properties of the functoriality of
$\iota$ and of the naturality of $i$ and rather focus on the primary
property of diagrams, i.e. the isomorphism property of $i_{\Sigma,M}$
and the commutativity shown in the diagram \eqref{diag-diag}. 
Moreover, in most applications of institution theoretic diagrams only
this primary property is used.
However the interested reader may develop by himself what the
functoriality and the naturality properties mean in explicit form, or
else he may consult them from \cite{iimt}. 

\subsection{Some supporting technical concepts}

The main idea underlying our development of a general result on the
existence of diagrams in stratified institutions is to consider
decompositions of stratified institutions, to asume diagrams for each
of the two components (which in concrete situations are already known
/ established), and then to combine these at the level of the
stratified institution.
However this process requires some technical conditions that we will
spell out explicitly in what follows.

The first condition supports the lifting of diagrams from $\B$ to
$\B^C$. 

\begin{definition}\label{coherent}
Let $\B$ be an institution and $\Mod^C$ be a constraint model
sub-functor for $\widetilde{\B}$.
A system of diagrams for $\B$ is \emph{coherent with respect to
  $\Mod^C$} when for each $\B$-signature $\Sigma$ and each
$(W,B) \in |\Mod^C (\Sigma)|$  we have that
\begin{enumerate}

\item For all $i,j \in W$, $\iota_{\Sigma,B^i} =
  \iota_{\Sigma,B^j}$; in this case all $\iota_{\Sigma,B^i}$s will be
  denoted by $\iota_{\Sigma,B} \co \Sigma \to \Sigma_B$. 

\item for each $(W,B) \in |\Mod^C (\Sigma)$ there exists a canonical
  isomorphism $i_{\Sigma,(W,B)}$ such that the following diagram
  commutes:
  \[
  \xymatrix{
  \Mod (\Sigma_B, E_{(W,B)}) \ar[rr]^{i_{\Sigma, (W,B)}}_{\cong}
  \ar[dr]_{\Mod^C (\iota_{\Sigma, B})} & &
  (W,B) / \Mod^C (\Sigma) \ar[dl]^{\text{forgetful}} \\
  & \Mod^C (\Sigma) 
  }
\]
where $\Mod (\Sigma_B, E_{(W,B)})$ denotes the subcategory of
the comma category $\sem{M} / \sem{\_}_{\Sigma_B}$ (where
$\sem{\_}_{\Sigma_B} \co \Mod^C (\Sigma_B)\to \Set$)
induced by those objects $(f \co W \to V, (V, N'))$ such that
${N'}^{f(i)} \models E_{B^i}$ for each $i\in W$.\footnote{Note 
  that unlike $E_{B^i}$, $E_{(W,B)}$ is \emph{not} a set of
  sentences.} 
  
\end{enumerate}
\end{definition}

\begin{example}[Coherence in $\widetilde{\PL}$]\label{coh-pl} 
\begin{rm}
  If $\B = \PL$ then $\Mod^C = \Mod^{\widetilde{B}}$.
  The first condition is trivially satisfied because in $\PL$ all
  elementary extensions $\iota_{P,B}$ are identities.
  On this basis the second condition is also trivially satisfied. 
\end{rm}
\end{example}

Although the following example requires a more intricate verification,
this is still rather straightforward. 

\begin{example}[Coherence in $\widetilde{\FOL}$]\label{coh-fol} 
\begin{rm}
If $\B = \FOL$ then the diagrams of $\FOL$ are coherent with respect
to $\Mod^C$ where $(W,B) \in |\Mod^C (F,P)|$ if and only if for all
$i,j \in W$, $|B^i| = |B^j|$ and $B^i_c = B^j_c$ for each $c \in
F_0$.

On the one hand, this is so because for each $\FOL$ $(F,P)$-model
$M$, $\iota_{(F,P),M}$ is the extension of $(F,P)$ with new
constants which are the elements of $M$.

On the other hand, the second condition of Definition \ref{coherent}
goes as follows.
For any $(f \co W \to V, (V,N')) \in \Mod(\Sigma_B, E_{(W,B)})$ we let
$i_{\Sigma,(W,B)} (f,(V,N')) = (f,h)$ where $h \co |B^i| \to
|N^{f(i)}|$ is the function that is invariant with respect to $i\in W$
and which is given by the diagrams of $B^i \co h^i = i_{\Sigma,B^i}
N^{f(i)}$ where $N^{f(i)}$ is the reduct of $N^{f(i)}$ along
$\iota_{\Sigma,B}$ (just forgets the interpretations of the new
constants corresponding to the elements of the models $B^j$, which are
in fact shared by all $B^j$). 
We can talk about \emph{one} function $h$ because as functions $h^i =
h^j$ for all $i,j \in W$.
This is so because for each element $c \in |B^i| = |B^j|$ we have that
$h^i (c) = {N'}^i_c = {N'}^j_c$ (because $(V,N')$ being a constraint
model its components share the interpretations of the constants) $=
h^j_c$.
This also makes $(f,h)$ a constraint model homomorphism, so
$i_{\Sigma, (W,B)} N'$ belongs to $(W,B) / \Mod^C (\Sigma)$ indeed.

The inverse $i_{\Sigma, (W,B)}^{-1}$ is defined as follows.
Given $(f,h) \co (W,B) \to (V,N)$, for each $i\in W$ we let
${N'}^{f(i)}$ be $i_{\Sigma, B^i}^{-1} h$ where $h \co B^i \to
N^{f(i)}$.
This is correctly defined because if $f(i) = f(j)$ then $N^{f(i)} =
N^{f(j)}$ and ${N'^{f(i)}}, {N'^{f(j)}}$ are just the expansions of
${N^{f(i)}}, {N^{f(j)}}$, respectively, with interpretations of new
constants, i.e. ${N'^{f(i)}} = h(c) = {N'^{f(j)}}$.
When $v \in V$ is outside the image of $f$, $N^v$ is uniquely
determined by the constraint as ${N'}^v$ is the expansion of $N^v$
with the interpretations of the elements of $B$ as new constants
which are shared with other ${N'}^i$. 
\end{rm}
\end{example}

The following defines the workable situation when both components of a
decomposition of a stratified institution admit diagrams, this being
the root condition of our approach to the existence of diagrams in
stratified institutions. 

\begin{definition}\label{decomp-diag}
A decomposition of a stratified institution $\SI$
\[
  \xymatrix @C+3em {
  \SI^0 & \ar[l]_{(\Phi^0,\alpha^0,\beta^0)} \SI
  \ar[r]^{(\Phi,\alpha,\widetilde{\beta})}& \widetilde{\B}^C  
  }
\]
\emph{admits diagrams} when ${\SI^0}^*$ and $\B$ have diagrams such
that the diagrams of $\B$ are coherent with respect to $\Mod^C$. 
\end{definition}

\begin{example}[The case of $\HPL$]\label{admits-diag-hpl}
\begin{rm}
  We have to recall Example \ref{dec1}.
  The diagrams of $\SI^0$ are as follows.
  For any $\SI^0$-signature $\Nom$ and any $\Nom$-model $W = (|W|,
  W_\lambda \subseteq |W| \times |W|)$, $\iota_{\Nom,W}$ is the
  extension of $\Nom$ with the elements of the set $|W|$ and
  \[
    E_W = \{ \lambda(i,j) \mid (i,j) \in W_\lambda \}.
  \]
  Since in this example $\B = \PL$ the coherence of the diagrams of
  $\B$ with respect to $\Mod^C$ is explained by Example
  \ref{coh-pl}. 
\end{rm}
\end{example}

\begin{example}[The case of $\HFOL$]\label{admits-diag-hfol}
\begin{rm}
  We have to recall Example \ref{dec2}.
  Since $\SI^0$ is the same like in the $\HPL$ case, the diagrams of
  $\SI^0$ are those given in Example \ref{admits-diag-hpl}. 
  In this example $\B = \FOL$ and therefore the coherence of the
  diagrams of $\B$ with respect to $\Mod^C$ is explained by Example
  \ref{coh-fol}. 
\end{rm}
\end{example}

\begin{notation}\label{iota-not}
\begin{rm}
For any decomposition of a stratified institution that admits
diagrams (like in Definition \ref{decomp-diag})
for any $\Sigma \in |\Sign^\SI|$ and $M \in |\Mod^\SI (\Sigma)|$, we
introduce the following abbreviations: 
\[
  \Sigma_0 = \Phi^0 \Sigma, \ 
  \Sigma_1 = \Phi \Sigma, \
  M_0 = \beta^0_\Sigma M, \
  M_1 = \widetilde{\beta}_\Sigma M. 
\]
We let $\iota_{\Sigma_0, M_0} \co \Sigma_0 \to ({\Sigma_0}_{M_0}, E_{M_0})$
and (for each $i \in \sem{M}$) $\iota_{\Sigma_1, M_1^i} \co \Sigma_1
\to ({\Sigma_1}_{M_1^i}, E_{M_1^i})$ be the diagrams of $M_0$ and
$M_1^i$, respectively.
By the coherence hypothesis we have $\iota_{\Sigma_1, M_1^i} =
\iota_{\Sigma_1, M_1^j}$ for all $i,j \in \sem{M}$.
This allows us to denote all $\iota_{\Sigma_1, M_1^i}$ by
$\iota_{\Sigma_1, M_1}$.

If 
  \[
    \xymatrix{
    \Sign^0 & \ar[l]_{\Phi^0} \Sign^{\SI}   \ar[r]^{\Phi}& \Sign^\B  
    }
  \]
is a product in $\CAT$ then we define the $\Sign^S$ morphism
$\iota_{\Sigma,M} \co \Sigma \to \Sigma_M$ by using the product
property of $(\Phi^0, \Phi)$: 
\[
\iota_{\Sigma,M} = (\iota_{\Sigma_0, M_0}, \iota_{\Sigma_1, M_1}).
\]
\end{rm}
\end{notation}

The last technical concept supporting the main result of this section
expresses the possibility that each element of the underlying
stratifications has a syntactic designation.
Although it has a rather heavy technical appearance it holds naturally
in the examples. 

\begin{definition}\label{nom-def}
Consider a decomposition of a stratified institution that admits
diagrams like in Definition \ref{decomp-diag}.
We say that the diagrams (of the decomposition) \emph{denote the
  stratification} when
\begin{itemize}

\item
  \[
    \xymatrix{
    \Sign^0 & \ar[l]_{\Phi^0} \Sign^{\SI}   \ar[r]^{\Phi}& \Sign^\B  
    }
  \]
is a product in $\CAT$,

\item $\SI$ has a nominals extraction $(N,\Nm)$, 
  
\item for each $\SI$-signature $\Sigma$ and each $\Sigma$-model $M$,
  there exists a function
  \[
    n_{\Sigma,M} \co \sem{M}_\Sigma \to N(\Sigma_M)
  \]
  such that $n$ is natural in $\Sigma$ and $M$, and 

\item for each $\Sigma_M$-model $N$ such that 
$N \models \alpha_0 E_{M_0}$, 
\[
n_{\Sigma,M} ; \Nm_{\Sigma_M} (N) = 
\sem{i_{\Sigma_0, M_0} N_0}_{\Sigma_0}. 
\]
\end{itemize}
\end{definition}

\begin{example}[The case of $\HPL$]\label{nom-hpl-decomp}
\begin{rm}
According to Example \ref{nom-extraction}, $\HPL$ has nominals
extraction $(N,\Nom)$ where for each $\HPL$ signature $(\Nom,P)$,
$N(\Nom,P) = \Nom$ and for each $(\Nom,P)$-model $(W,M)$,
$\Nm(W,M) = (|W|, (W_i)_{i\in \Nom})$.
Then  we define $n_{(\Nm,P),(W,M)}$ as the canonical injection $|W|
\to \Nom + |W|$, where $\Nom + |W|$ denotes the disjoint union of
$\Nom$ and $|W|$.

Now let us consider any $(\Nom,P)_{(W,M)}$-model $(V',N')$ such that
$(V',N') \models \alpha_0 E_W$ which means $V' \models E_W$.
Then for each $w\in |W|$ we have that
\[
(\Nm (V',N'))_{n(w)} = (|V|, (V'_i)_{i\in \Nom + |W|})_w = V'_w =
(i_{\Nom,W} V')w = \sem{i_{\Nom,W} V'} w.
\]
\end{rm}
\end{example}

\begin{example}[The case of $\HFOL$]\label{nom-hfol-decomp}
\begin{rm}
This is similar to Example \ref{nom-hpl-decomp} because the property
of Definition \ref{nom-def} depends essentially on the $\SI^0$ part of
the decomposition of $\SI$, which is shared between $\HPL$ and $\HFOL$. 
\end{rm}
\end{example}

\subsection{The existence of diagrams in stratified institutions}

\begin{theorem}\label{diag-thm}
For any decomposition of a stratified institution $\SI$ that admits
diagrams that denote the stratification:
\begin{itemize}

\item $\SI^*$ has diagrams when $\SI$ has explicit local satisfaction, 
  and 

\item $\SI^\sharp$ has diagrams when $\SI$ has explicit local
  satisfaction and has $i$-sentences too.
  
\end{itemize}
\end{theorem}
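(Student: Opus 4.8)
The plan is to assemble the diagram of an $\SI$-model $M$ by gluing, through the decomposition pullback of Definition \ref{decomp-def}, the ${\SI^0}^*$-diagram of the header model $M_0 = \beta^0_\Sigma M$ with the $\Mod^C$-coherent $\B$-diagram of the base family $M_1 = \widetilde{\beta}_\Sigma M$. First I would take the signature $\Sigma_M$ and the elementary extension $\iota_{\Sigma,M} = (\iota_{\Sigma_0,M_0},\iota_{\Sigma_1,M_1})$ already produced in Notation \ref{iota-not} from the product property of $(\Phi^0,\Phi)$, so that $\Phi^0\Sigma_M = {\Sigma_0}_{M_0}$ and $\Phi\Sigma_M = {\Sigma_1}_{M_1}$. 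Writing $n = n_{\Sigma,M}$ for the designation map of Definition \ref{nom-def}, I would then declare the diagram to be
$$E_M = \alpha^0_{\Sigma_M}(E_{M_0}) \,\cup\, \{\, @_{n(w)}\,\alpha_{\Sigma_M}(\psi) \mid w\in\sem{M}_\Sigma,\ \psi\in E_{M_1^w}\,\},$$
the operators $@_i$ being available by the explicit local satisfaction hypothesis.

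The core of the proof is one computation. Let $P'$ be a $\Sigma_M$-model with $\iota_{\Sigma,M}$-reduct $P$, and write $P'_0 = \beta^0 P'$ and $\widetilde{\beta}P' = (V,N'')$. By the Satisfaction Condition of $(\Phi^0,\alpha^0,\beta^0)$ and $\sem{P'} = \sem{P'_0}$, we get $P' \models^* \alpha^0 E_{M_0}$ iff $P'_0 \models^*_{\SI^0} E_{M_0}$, which by the ${\SI^0}^*$-diagram encodes a header homomorphism $h_0 = i_{\Sigma_0,M_0}P'_0 \co M_0 \to P_0$ with world map $f = \sem{h_0}$. The denote-the-stratification identity $n;\Nm_{\Sigma_M}(P') = \sem{i_{\Sigma_0,M_0}P'_0}_{\Sigma_0}$ yields the crucial equality $(\Nm_{\Sigma_M}P')_{n(w)} = f(w)$; feeding this into the Satisfaction Condition of $(\Phi,\alpha,\widetilde{\beta})$ and the definition of satisfaction in $\widetilde{\B}$, I obtain that $P'$ satisfies the $@$-block at $w$ exactly when $N''^{f(w)} \models^\B E_{M_1^w}$. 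By coherence with respect to $\Mod^C$ (Definition \ref{coherent}) the totality of these conditions encodes a $\Mod^C$-homomorphism $M_1 \to \widetilde{\beta}P$ whose world map is again $f$. Since the two world maps coincide, the pullback square of Definition \ref{decomp-def} glues $h_0$ and this base homomorphism into a unique $\SI$-homomorphism $M \to P$, and conversely. This is the object part of the isomorphism $i_{\Sigma,M} \co \Mod(\Sigma_M,E_M) \xrightarrow{\cong} M/\Mod(\Sigma)$; the morphism part and the commutativity \eqref{diag-diag} with the reducts follow because every ingredient is carried by a functor and the gluing is a pullback in $\CAT$.

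For the $\SI^\sharp$ version I would keep $\Sigma_M$ and instead use $E_{(M,w)} = E_M \cup \{\sen{n(w)}\}$, which is where the extra $i$-sentence hypothesis is spent. Evaluating in $\SI^\sharp$ at a distinguished world $v'$, the $i$-sentence $\sen{n(w)}$ holds iff $(\Nm_{\Sigma_M}P')_{n(w)} = v'$, i.e. by the computation above iff $v' = f(w) = \sem{h}\,w$; this pins the distinguished world of the target to be exactly the image of the base point $w$, matching the comma category $(M,w)/\Mod^\sharp(\Sigma)$. Because the $@$- and $\sen{\,}$-sentences are world independent---their truth at $v'$ does not depend on $v'$---the object correspondence transfers almost verbatim from the $\SI^*$ argument.

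I expect the main obstacle to be precisely the bookkeeping that ties the syntactic designation to the semantic world map, namely establishing $(\Nm_{\Sigma_M}P')_{n(w)} = \sem{i_{\Sigma_0,M_0}P'_0}_{\Sigma_0}(w)$ and using it to align the world maps of the two component homomorphisms so that the decomposition pullback applies. A secondary delicate point is the satisfaction mode: the header data is captured through the global satisfaction of ${\SI^0}^*$, so one must check that evaluating $\alpha^0 E_{M_0}$ reproduces $\models^*_{\SI^0}$ in the $\SI^*$ case and, in the $\SI^\sharp$ case, that world independence of the localized sentences makes single-world evaluation of the header sufficient.
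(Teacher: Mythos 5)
Your proposal follows essentially the same route as the paper's proof: the same diagram set $E_M = \alpha^0_{\Sigma_M} E_{M_0} \cup \bigcup_{w} @_{n_{\Sigma,M}(w)}(\alpha_{\Sigma_M} E_{M_1^w})$, the same use of the denote-the-stratification condition to identify $(\Nm_{\Sigma_M}N')_{n_{\Sigma,M}(w)}$ with the world map of the header homomorphism, the same gluing of the two component homomorphisms through the decomposition pullback (which the paper merely packages more formally as a functor $\gamma_{\Sigma,M}$ and an outer-hexagon pullback), and the same $E_{(M,w)} = E_M \cup \{\sen{w}\}$ with the $i$-sentence pinning the distinguished world in the $\SI^\sharp$ case. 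One small slip of the pen: your final remark that the $\sen{\,}$-sentences are ``world independent'' contradicts your own (correct) earlier use of $\sen{n(w)}$ to pin $v' = \sem{h}w$; what is actually needed --- and what you and the paper both use, the paper tacitly --- is world independence of the sentences of $E_M$ itself, so that single-world satisfaction in $\SI^\sharp$ yields the global satisfaction required to invoke the $\SI^*$ isomorphism.
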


\begin{proof}
For each $\SI$ signature $\Sigma$ and each $\Sigma$-model $M$ we
define $E_M \subseteq \Sen^{\SI} (\Sigma_M)$ by 
\[
E_M = \alpha^0_{\Sigma_M} E_{M_0} \ \cup \ \bigcup_{i\in \sem{M}} @_i
(\alpha_{\Sigma_M} E_{M_1^i})
\]
where $@_i (\alpha_{\Sigma_M} E_{M_1^i})$ abbreviates
$\{ @_{n_{\Sigma,M} (i)} \alpha_{\Sigma_M} \rho \mid \rho \in
E_{M_1^i} \}$. 

We will prove that $\iota_{\Sigma,M} \co \Sigma \to (\Sigma_M, E_M)$
(see Notation \ref{iota-not}) is the diagram of $M$ in $\SI^*$.
The coherence condition implies that there exists a canonical 
isomorphism $i_{\Sigma_1, M_1}$ such that the following diagram
commutes:
\[
  \xymatrix{\
  \Mod ({\Sigma_1}_{M_1}, E_{M_1}) \ar[rr]^{i_{\Sigma_1, M_1}}_{\cong}
  \ar[dr]_{\Mod^C (\iota_{\Sigma_1, M_1})} & &
  M_1 / \Mod^C (\Sigma_1) \ar[dl]^{\text{forgetful}} \\
  & \Mod^C (\Sigma_1) 
  }
\]

Let $\gamma_{\Sigma,M} \co \Mod(\Sigma_M) \to \Mod({\Sigma_1}_{M_1})$ be
the functor defined by
\[
\gamma_{\Sigma,M} N' = (f\co \sem{M} \to \sem{N'},
\widetilde{\beta}_{\Sigma_M} N')
\]
where $f(i) = \Nm_{\Sigma_M} (N')_{n_{\Sigma,M} i}$.
Then the restriction of $\gamma_{\Sigma,M}$ to $\Mod(\Sigma_M, E_M)$
yields a functor
$\Mod(\Sigma_M, E_M) \to \Mod({\Sigma_1}_{M_1}, E_{M_1})$, as follows.

Let $N'$ be a $\Sigma_M$-model such that $N' \models E_M$.
Then we have that
\begin{proofsteps}[stepsep=-2pt, justwidth=9em]
  \step[g1]{$N' \models_{\Sigma_M} @_i (\alpha_{\Sigma_M} E_{M_1^i})$
    for each $i\in \sem{M}$}{}
  \step[g2]{$N' \models_{\Sigma_M} @_{n_{\Sigma,M}i} (\alpha_{\Sigma_M}
    E_{M_1^i})$ for each $i\in \sem{M}$}{\ref{g1} rewritten}
  \step[g3]{$N' \models^{\Nm_{\Sigma,M} (N')_{n_{\Sigma,M}i}} \alpha_{\Sigma_M}
    E_{M_1^i}$ for each $i\in \sem{M}$}{from \ref{g2} by definition of
    $@_i$}   
  \step[g4]{$\widetilde{\beta}_{\Sigma_M} N' \models^{\Nm_{\Sigma,M}
      (N')_{n_{\Sigma,M}i}} E_{M_1^i}$ for each $i\in \sem{M}$}{from
    \ref{g3} by the Satisfaction Condition of
    $(\Phi,\alpha,\widetilde{\beta})$ }
  \step[g5]{$(\lambda i . \Nm_{\Sigma,M} (N')_{n_{\Sigma,M}i},
    \widetilde{\beta}_{\Sigma_M} N') \in \Mod({\Sigma_1}_{M_1},
    E_{M_1})$}{from \ref{g4} by definition of
    $\Mod({\Sigma_1}_{M_1},E_{M_1})$}
  \step[g6]{$\gamma_{\Sigma,M} N' \in \Mod({\Sigma_1}_{M_1},
    E_{M_1})$}{from \ref{g5} by the definition of $\gamma$}
\end{proofsteps} 

In the diagram below the upper left square is a pullback square.
This follows by general categorical considerations on the basis of the
pullback condition on model categories from the decomposition of
$\SI$.  
\begin{equation}\label{s-diag}
  \xymatrix{
    \sem{M} / \Set & M_0 / \Mod^0 (\Sigma_0)
    \ar[l]_{\sem{\_}_{\Sigma_0}}  & \\
    M_1 /\Mod(\Sigma_1) \ar[u]^{\sem{\_}_{\Sigma_1}} &
    M / \Mod^\SI (\Sigma) \ar[ul]_{\sem{\_}_\Sigma}
    \ar[u]^{\beta^0_\Sigma}
    \ar[l]_{\widetilde{\beta}_\Sigma} &
    \Mod^0 ({\Sigma_0}_{M_0}, E_{M_0})
    \ar[ul]_{i_{\Sigma_0, M_0}}^\cong \\
    & \Mod({\Sigma_1}_{M_1}, E_{M_1}) \ar[ul]^{i_{\Sigma_1, M_1}}_\cong &
    \Mod(\Sigma_M, E_M) \ar[l]^{\gamma_{\Sigma,M}}
    \ar@{..>}[ul]_{i_{\Sigma,M}}^\cong \ar[u]_{\beta^0_{\Sigma_M}}
  }
\end{equation}
If we proved that the outer hexagon of the above diagram represents a
pullback too, then we obtain the isomorphism
$i_{\Sigma,M} \co \Mod^\SI (\Sigma_M, E_M) \to M / \Mod^\SI (\Sigma)$.

We first show the commutativity of the outer hexagon.
For each $\Sigma_M$-model $N'$ such that $N' \models E_M$ we have that

\

\noindent
$\sem{i_{\Sigma_1, M_1} (\gamma_{\Sigma,M} N')}_{\Sigma_1} = $ 
\begin{proofsteps}[stepsep=-2pt, justwidth=11em]
  \step*{}{$= \sem{i_{\Sigma_1, M_1} (\lambda i . \Nm_{\Sigma_M}
      (N')_{n_{\Sigma,M} (i)}, \widetilde{\beta}_{\Sigma_M}
      N')}_{\Sigma_1}$}{definition of $\gamma$} 
  \step*{}{$= \lambda i . \Nm_{\Sigma_M} (N')_{n_{\Sigma,M}
      (i)}$}{definition of $i_{\Sigma_1, M_1}$}
  \step*{}{$= \sem{i_{\Sigma_0, M_0} (\beta^0_{\Sigma_M}
      N')}_{\Sigma_0}$}{Definition \ref{nom-def}.}   
\end{proofsteps}

Finally, we show that the hexagon represents a pullback.
We must prove that given any $(f,N_1) \in \Mod({\Sigma_1}_{M_1},
E_{M_1})$ and $N_0 \in |\Mod^0 ({\Sigma_0}_{M_0}, E_{M_0})|$ such that
$\sem{i_{\Sigma_0, M_0} N_0} = f$ there exists an unique
$N \in \Mod(\Sigma_M, E_M)$ such that $\gamma_{\Sigma,M} N = (f, N_1)$
and $\beta^0_{\Sigma_M} N = N_0$.
It follows that $ N_1 = \widetilde{\beta}_{\Sigma_M} N$.
Note that from $\sem{i_{\Sigma_0, M_0} N_0} = f$ it also follows that
$\sem{N_0} = \sem{N_1}$.
Hence by the pullback of the categories of models of the decomposition 
there exists an unique $N$ such that $N_1 =
\widetilde{\beta}_{\Sigma_M} N$ and $N_0 = \beta^0_{\Sigma_M} N$.
Moreover $f$ is uniquely determined by the condition
$\sem{i_{\Sigma_0, M_0} N_0} = f$. 

For the second conclusion of the theorem, for each $\SI$-signature
$\Sigma$, each $\Sigma$-model $M$, and each $w\in \sem{M}_\Sigma$, let
us abbreviate $\sen{n_{\Sigma,M} (w)}$ by $\sen{w}$.
Let $\iota_{\Sigma,M} \co \Sigma \to (\Sigma_M, E_M)$ be the diagram
corresponding to $M$ in $\SI^*$ as established above.
The we prove that for each $\SI^\sharp$ $\Sigma$-model $(M,w)$,
$\iota_{\Sigma,M} \co \Sigma \to (\Sigma_M, E_M)$ is a diagram for
$(M,w)$ where
\[
E_{(M,w)} = E_M \cup \{ \sen{w} \}.
\]
We prove that the isomorphism $i_{\Sigma,M}$ (in $\SI^*$) can be
extended to an isomorphism
\[
i_{\Sigma,(M,w)} \co \Mod^\sharp (\Sigma_M, E_{(M,w)}) \to (M,w) /
\Mod^\sharp (\Sigma). 
\]
For any $\SI^\sharp$ $\Sigma_M$-model $(N',v)$ that satisfies
$E_{(M,w)}$ we have that $N' \models E_M$.
Let $h = i_{\Sigma,M} N' \co M \to N$.
It remains to show that $(h,w) \co (M,w) \to (N,v)$ is a homomorphism
in $\SI^\sharp$ is equivalent to $(N',v) \models \sen{w}$:

\

\noindent
$(N',v) \models \sen{w}$ 
\begin{proofsteps}[stepsep=-2pt, justwidth=18em]
  \step*{}{$\lc{\equiv} \ \Nm_{\Sigma_M} (N')_{n_{\Sigma,M} w} =
    v$}{definition of satisfaction of $i$-sentences} 
  \step*{}{$\lc{\equiv} \ \sem{i_{\Sigma_0, M_0} N'_0}_{\Sigma_0} w =
    v$}{by Definition \ref{nom-def}}
  \step*{}{$\lc{\equiv} \ \sem{i_{\Sigma, M} N'}_{\Sigma} w =
    v$}{by the commutativity of the upper right half of
    \eqref{s-diag}}
  \step*{}{$\lc{\equiv} \ h(w) = v$.}{}   
\end{proofsteps}

\end{proof}

We can apply Theorem \ref{diag-thm} for $\SI = \HPL$ and $\SI =
\HFOL$ and obtain the following two corollaries, which are emblematic
for applications of this general result. 

\begin{corollary}\label{hpl-diag-prop}
  $\HPL^*$ and $\HPL^\sharp$ have diagrams.
\end{corollary}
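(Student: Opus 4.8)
The plan is to obtain the corollary as a direct instance of Theorem~\ref{diag-thm} with $\SI = \HPL$, so that the whole argument reduces to checking that $\HPL$ meets every hypothesis of that theorem and then reading off both of its conclusions. First I would recall the decomposition of $\HPL$ assembled in Example~\ref{dec1}: here $\SI^0 = \BRELC$ (viewed as a strict stratified institution with $\sem{W} = |W|$), the base is $\B = \PL$ via Example~\ref{base0}, and the constraint model sub-functor is the full one, $\Mod^C = \Mod^{\wt{\B}}$. With this decomposition fixed, the hypothesis ``admits diagrams that denote the stratification'' has in fact already been verified piecemeal in the preceding examples.

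Concretely, Example~\ref{admits-diag-hpl} exhibits diagrams for both ${\SI^0}^*$ and $\B = \PL$ (the latter coherent with respect to $\Mod^C$ by Example~\ref{coh-pl}), so the decomposition admits diagrams in the sense of Definition~\ref{decomp-diag}; and Example~\ref{nom-hpl-decomp} supplies the nominals extraction $(N,\Nm)$ together with the designation functions $n_{(\Nom,P),(W,M)}$ and the compatibility identity of Definition~\ref{nom-def}, so the diagrams denote the stratification. It then remains only to check the satisfaction-theoretic hypotheses that distinguish the two conclusions of the theorem. By the table in Example~\ref{conn-table}, $\HPL$ possesses $@_i$ for every $i$, i.e.\ it has explicit local satisfaction in the sense of Definition~\ref{hybrid-strat-institution-dfn}; this is exactly what the first bullet of Theorem~\ref{diag-thm} requires, yielding that $\HPL^*$ has diagrams. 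The same table records that $\HPL$ also has $i$-sentences $\sen{i}$, so the stronger hypothesis of the second bullet is met as well, yielding that $\HPL^\sharp$ has diagrams.

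Since every hypothesis is discharged by an explicitly cited earlier example, there is no genuine obstacle in the argument: the content of the corollary lies entirely in the preparatory verifications already carried out for $\HPL$ throughout Section~\ref{ex-sect} and the supporting definitions of this section. The only point demanding a moment's care is to confirm that $\HPL$ satisfies both satisfaction-theoretic conditions \emph{simultaneously} --- the presence of $@_i$ \emph{and} of $\sen{i}$ --- since the $\SI^\sharp$ conclusion, unlike the $\SI^*$ one, genuinely needs the $i$-sentences to form $E_{(M,w)} = E_M \cup \{\sen{w}\}$; both are guaranteed by Example~\ref{conn-table}.
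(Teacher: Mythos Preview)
Your proposal is correct and follows essentially the same route as the paper: both obtain the corollary by instantiating Theorem~\ref{diag-thm} at $\SI = \HPL$ and discharging its hypotheses via the preparatory Examples~\ref{dec1}, \ref{admits-diag-hpl}, \ref{coh-pl}, \ref{nom-hpl-decomp}, and \ref{conn-table}. The only difference is presentational --- the paper's proof additionally spells out the concrete elementary extension $\iota_{(\Nom,P),(W,M)} \co (\Nom,P) \to (\Nom + |W|, P)$ and the resulting diagram sentences $E_{(\Nom,P),(W,M)}$ and $E_{(\Nom,P),((W,M),w)}$, which you leave implicit in the general construction of the theorem.
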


\begin{proof}
  We have to recall Example \ref{admits-diag-hpl}.
  For each $(\Nom,P)$-model $(W,M)$: 
\begin{itemize}

\item $\iota_{(\Nom,P),(W,M)}$ is the signature extension with
  nominals $(\Nom,P) \ra (\Nom + |W|,P)$; and

\item $E_{(\Nom,P),(W,M)} = \{ @_i \Diamond j \mid (i,j)\in W_\lambda
  \} \cup \{ @_i \pi \mid \pi \in M^i, i\in |W| \}$.

\end{itemize}
Consequently $\HPL^\sharp$ has diagrams that are defined for
each model $((W,M),w)$ as follows:
\begin{itemize}

\item the elementary extensions are the same as for the $\HPL$
  diagrams; and

\item $E_{(\Nom,P),((W,M),w)} = E_{(\Nom,P),(W,M)} \cup \{   \sen{w} \}$.

\end{itemize}
\end{proof}

\begin{corollary}\label{qhl-diag-prop}
  $\HFOL^*$ and $\HFOL^\sharp$ have diagrams.
\end{corollary}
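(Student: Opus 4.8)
The plan is to obtain this corollary as a direct instance of Theorem~\ref{diag-thm} applied to $\SI = \HFOL$, in complete parallel with the proof of Corollary~\ref{hpl-diag-prop}. First I would recall from Examples~\ref{dec2} and \ref{admits-diag-hfol} the decomposition
\[
  \xymatrix @C+3em {
  \SI^0 & \ar[l]_{(\Phi^0,\alpha^0,\beta^0)} \HFOL
  \ar[r]^{(\Phi,\alpha,\widetilde{\beta})}& \widetilde{\FOL}^C
  }
\]
in which $\SI^0 = \BRELC$ is the frame-level stratified institution already used in the $\HPL$ case, $\B = \FOL$, and $\Mod^C$ is the constraint sub-functor expressing the sharing of the carriers and of the interpretations of the constants across the worlds.

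Next I would verify the hypotheses of Theorem~\ref{diag-thm}, i.e.\ that the decomposition admits diagrams that denote the stratification and that $\HFOL$ has explicit local satisfaction and $i$-sentences. The first is assembled from three earlier observations: the diagrams of ${\SI^0}^*$ are those of Example~\ref{admits-diag-hpl}; the coherence of the $\FOL$ diagrams with respect to $\Mod^C$ is precisely Example~\ref{coh-fol}; and the denotation of the stratification is Example~\ref{nom-hfol-decomp}, which rests on the span $(\Phi^0,\Phi)$ of signature functors being a product, so that $\iota_{\Sigma,M}$ of Notation~\ref{iota-not} is well defined. The second holds by Example~\ref{conn-table}, where $\HFOL$ is seen to possess both the operators $@_i$ and the sentences $\sen{i}$. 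Hence both conclusions of Theorem~\ref{diag-thm} apply.

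Third, to match the explicit presentation of Corollary~\ref{hpl-diag-prop}, I would unfold the formula
\[
E_M = \alpha^0_{\Sigma_M} E_{M_0} \ \cup \ \bigcup_{i\in \sem{M}} @_i (\alpha_{\Sigma_M} E_{M_1^i})
\]
from the proof of Theorem~\ref{diag-thm} for $\SI = \HFOL$. For a $(\Nom,(F,P))$-model $(W,M)$ the elementary extension $\iota_{(\Nom,(F,P)),(W,M)}$ adjoins the worlds $|W|$ as new nominals and the elements of the shared carrier as new constants, and
\[
E_{(\Nom,(F,P)),(W,M)} = \{ @_i \Diamond j \mid (i,j)\in W_\lambda \} \ \cup \ \{ @_i \rho \mid \rho\in E_{M^i},\ i\in |W| \},
\]
where $E_{M^i}$ is the ordinary $\FOL$ diagram of the first-order model $M^i$ sitting at world $i$. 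For $\HFOL^\sharp$ one adjoins in addition the sentence $\sen{w}$, giving $E_{(\Nom,(F,P)),((W,M),w)} = E_{(\Nom,(F,P)),(W,M)} \cup \{ \sen{w} \}$.

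The only step I expect to require genuine attention is the $\FOL$ half of the construction, but this has already been discharged in Example~\ref{coh-fol}: because the carriers and the interpretations of the constants are shared across worlds, the new constants naming the elements of $M^i$ coincide for every $i$, so all the per-world diagrams $@_i E_{M^i}$ live over a single common elementary extension and the canonical isomorphism $i_{\Sigma,(W,M)}$ glues the per-world $\FOL$ isomorphisms into one well-defined model homomorphism. Beyond invoking this coherence, no new argument is needed, and the corollary reduces to a bookkeeping instance of Theorem~\ref{diag-thm}.
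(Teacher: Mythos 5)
Your proof is correct and takes essentially the same route as the paper's: both obtain the corollary by instantiating Theorem~\ref{diag-thm} at $\SI = \HFOL$ through the decomposition with $\SI^0 = \BRELC$ and $\B = \FOL$ (Examples~\ref{dec2}, \ref{admits-diag-hfol}, \ref{coh-fol}, \ref{nom-hfol-decomp}), arriving at the same elementary extension $(\Nom+|W|,F+|M|,P)$ and the same diagram sets $\{ @_i \Diamond j \mid (i,j)\in W_\lambda\} \cup \{ @_i \rho \mid \rho \in E_{(F,P),M^i},\ i\in |W|\}$, with $\sen{w}$ adjoined for $\HFOL^\sharp$. Your write-up is merely more explicit than the paper's about checking the hypotheses of Theorem~\ref{diag-thm}, which the paper delegates to the cited examples.
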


\begin{proof}
  We have to recall Example \ref{admits-diag-hfol}.
  For each $(\Nom,F,P)$-model $(W,M)$:

  \begin{itemize}

\item $\iota_{(\Nom,F,P),(W,M)}$ is the signature extension to
  $(\Nom+|W|,F+|M|,P)$; and 

\item $E_{(\Nom,F,P),(W,M)} = \{ @_i \Diamond j \mid (i,j)\in W_\lambda
  \} \cup \{ @_i \rho \mid \rho \in E_{(F,P),M^i}, i\in |W| \}$, where
  $E_{(F,P),M^i}$ denotes the $\FOL$ diagram of $M^i$. 

\end{itemize}
$\HFOL^\sharp$ has diagrams that are defined for each $((W,M),w)$ as
follows: 
\begin{itemize}

\item the elementary extensions are the same as for the $\HFOL$
  diagrams; and

\item $E_{(\Nom,F,P),((W,M),w)} =
  E_{(\Nom,F,P),(W,M)} \cup \{   \sen{w} \}$.

\end{itemize}
\end{proof}

\subsection{Non-existence of diagrams}

One of the general conclusions of our study of diagrams for stratified
institutions is that they are dependent on some kind of hybrid
infrastructure.
While mathematically Theorem \ref{diag-thm} only says that such
infrastructure is sufficient, this also \emph{feels} necessary because
the whole idea of diagrams is related to having syntactic designations
for all elements of the models, either ``worlds'' or elements of their
interpretations. 
The following negative result\footnote{Developed jointly with
  Manuel-Antonio Martins.} provides some support for this
conclusion.

\begin{proposition}
Neither $\MPL^*$ nor $\MPL^\sharp$ admit institution theoretic
diagrams. 
\end{proposition}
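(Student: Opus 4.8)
The plan is to contradict even the \emph{primary} property of diagrams (the isomorphism $i_{\Sigma,M}$ together with the commutativity of \eqref{diag-diag}, which is all that is retained in the simplified discussion above), by a counting argument on suitably degenerate Kripke frames. First I would record the fibrewise reformulation that is actually used: if $\MPL^*$ (respectively $\MPL^\sharp$) had diagrams, then for the chosen signature $\Sigma$ and model $M$, with elementary extension $\iota_\Sigma(M)\co \Sigma \to \Sigma_M$ and diagram $E_M$, the commuting isomorphism would restrict, over each $\Sigma$-model $N$, to a bijection between the set of $\iota_\Sigma(M)$-expansions of $N$ that satisfy $E_M$ and the hom-set $\mathrm{Hom}(M,N)$ of the model category. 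This is the only consequence I need, and it is exactly what I will violate.

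The key device is to take $\Sigma = \emptyset$. Then a $\Sigma$-model is merely a frame $(V,V_\lambda)$; the extended signature $\Sigma_M$ is just some set $Q$ of propositional variables; an $\iota_\Sigma(M)$-expansion of $(V,V_\lambda)$ is an \emph{arbitrary} $Q$-valuation $N'\co |V|\to 2^Q$ on the \emph{same} frame, since reducts in $\MPL$ never alter the frame; and $E_M \subseteq \Sen^{\MPL}(Q)$. I would then evaluate both sides of the fibrewise bijection on frames built from finitely many isolated irreflexive worlds. At such a world $v$ the accessibility relation is empty, so $\Diamond\psi$ is false there, and by induction on sentence structure the satisfaction of any $\MPL$ sentence at $v$ depends only on the valuation $N'^v$; hence satisfaction is completely decoupled across the worlds of such a frame. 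Writing $S$ for the set of those $X\subseteq Q$ such that the one-point irreflexive frame carrying valuation $X$ satisfies $E_M$ at its unique world, the number of $E_M$-expansions over any such frame is then governed entirely by $S$.

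For $\MPL^*$ I would take $M = (\{a,b\},\emptyset)$: the one-point irreflexive frame gives a hom-set of size $1^2 = 1$ and an expansion count of $|S|$, forcing $|S| = 1$; the two-point isolated irreflexive frame gives a hom-set of size $2^2 = 4$ but, since global satisfaction requires $N'^{v_1}\in S$ \emph{and} $N'^{v_2}\in S$, an expansion count of $|S|^2 = 1$, and $1\neq 4$ is the contradiction. For $\MPL^\sharp$ the base point is preserved by homomorphisms, so with $M = ((\{a,b\},\emptyset),a)$ the hom-set into the $k$-point isolated irreflexive frame pointed at $v_1$ has size $k$ (the image of $a$ is pinned to $v_1$, that of $b$ is free), while satisfaction of $E_M$ is now \emph{local} at $v_1$ and leaves the remaining $k-1$ worlds unconstrained, giving an expansion count of $|S|\cdot(2^{|Q|})^{k-1}$. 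Here $k=1$ again yields $|S|=1$, and the main obstacle is that this count involves the possibly infinite cardinal $2^{|Q|}$: I would dispatch $k=2$ by noting that an infinite $Q$ already makes the count infinite against a hom-set of size $2$, so $2^{|Q|}=2$ and $Q$ is a singleton, after which $k=3$ gives an expansion count $1\cdot(2^{1})^{2}=4$ against a hom-set of size $3$, the final contradiction. The work that requires care is thus not the construction but the bookkeeping — stating the decoupling over isolated irreflexive worlds precisely enough that the two cardinalities are genuinely comparable, and ensuring the $\MPL^\sharp$ argument handles an arbitrary, possibly infinite, extended signature $Q$ rather than silently assuming it finite.
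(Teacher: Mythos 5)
Your proof is correct, but it takes a genuinely different route from the paper's. The paper argues categorically and treats the two cases by different devices: for $\MPL^*$ it observes that the empty Kripke structure globally satisfies every sentence (vacuously), so the object of $\Mod(P_{(W,M)},E_{(W,M)})$ corresponding to the initial object $1_{(W,M)}$ of the comma category is forced to be the empty structure, contradicting the fact that its reduct along $\iota_{P,(W,M)}$ must be the nonempty $(W,M)$; for $\MPL^\sharp$ it fixes $P=\{\pi\}$, takes the two-point pointed model $((W,M),w)$ with $W_\lambda=\emptyset$, $M^w=\emptyset$, $M^v=P$, perturbs the diagram model $i^{-1}_{P,((W,M),w)}(1)$ at the world $v$ invisible from the base point, notes that satisfaction at $w$ is unchanged, and so extracts a homomorphism $h$ violating the monotonicity condition $M^v\subseteq N^{h(v)}$. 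You instead use only the object-level consequence of diagrams --- the fibrewise bijection between $E_M$-expansions of a model and its hom-set from $M$ --- and reach contradictions by pure counting over isolated irreflexive frames with $\Sigma=\emptyset$, using the decoupling of satisfaction across worlds when the accessibility relation is empty. Both proofs exploit the same underlying phenomenon (without nominals, satisfaction cannot see isolated worlds), but yours buys uniformity and robustness: the same device handles $*$ and $\sharp$, it never uses morphisms of the diagram category (only cardinalities of object fibres), and --- unlike the paper's $\MPL^*$ argument --- it does not depend on the empty Kripke structure being a legal model, so it would survive in variants of $\MPL$ where frames are required to be nonempty. The price is the cardinality bookkeeping for a possibly infinite extended signature $Q$, which you handle correctly: the one-point fibre forces $|S|=1$, the two-point fibre in the $\sharp$ case forces $2^{|Q|}=2$ hence $|Q|=1$, and the three-point fibre then gives $4\neq 3$. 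The paper's arguments are, per case, shorter and more structural; yours is more elementary and self-contained.
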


\begin{proof}
  In both cases we perform a \emph{Reductio ad Absurdum} proof by
  initially assuming that each of the two institutions admit
  diagrams.

  \begin{itemize}

  \item 
  In the case of $\MPL^*$, let $(W,M)$ be any $P$-model in $\MPL$ such
  that $|W|\not= \emptyset$. 
Since $1_{(W,M)}$ is initial in the comma category
$(W,M)/\Mod^{\MPL}(P)$ it follows that $i_{P,(W,M)}^{-1} (1_{(W,M)})$
is initial in the category of the $P_{(W,M)}$-models satisfying
$E_{P,(W,M)}$. 
But it is easy to note that the empty $\MPL$ Kripke structure
trivially satisfies any sentence in $\MPL$ hence $i_{P,(W,M)}^{-1}
(1_{(W,M)})$ is bound to be this trivial empty Kripke structure.  
This is a contradiction because, according to the axioms of diagrams,
when reducing $i_{P,(W,M)}^{-1} (1_{(W,M)})$ via $\iota_{P,(W,M)}$ we
should obtain $(W,M)$ which by our assumption is not empty. 

\item 
For the case of $\MPL^\sharp$ let us consider a singleton signature $P
= \{ \pi \}$ and the $\MPL^\sharp$ $P$-model $((W,M),w)$ where 
\begin{itemize}

\item $|W| = \{ w,v \}$ and $W_\lambda = \emptyset$; and 

\item $M^w = \emptyset$ and $M^v = P$; 

\end{itemize}
Since $P$ is a singleton, without any loss of generality we may assume
that the elementary extension $\iota_{P,((W,M),w)}$ is an inclusion 
$P \subseteq P'$.   
Let 
\[
((W,M'),w) = i_{P,((W,M),w)}^{-1} (1_{(W,M),w)}) 
\]
and let the $\MPL^\sharp$ $P'$-model $((W,N'),w)$ be defined by $N'^w
= M'^w$ and $N'^v = M'^v \setminus P$.   
By induction on the structure of $\rho$, it is easy to establish that
for any $P'$-sentence $\rho$ we have that 
\[
((W,M'),w) \models \rho \mbox{ \ if and only if \ }
((W,N'),w) \models \rho.
\]
Since $((W,M'),w)$ is a model of the diagram of $((W,M),w)$, it
follows that $((W,N'),w)$ is a model of that diagram too. 
Hence $i_{P,((W,M),w)}((W,N'),w))$ is a homomorphism 
$((W,M),w) \ra ((W,N),w)$ where $((W,N),w)$ is the $P$-reduct of
$((W,N'),w)$. 
Let us denote it by $h$. 
Then by the homomorphism property of $h$ we have that 
$M^v \subseteq N^{h(v)}$. 
But $M^v = P$ and $N^w = N^v = \emptyset$, hence $M^v \not\subseteq
N^{h(v)}$. 
This contradiction invalidates our supposition of the existence of
diagrams.

\end{itemize}
\end{proof}

\section{Conclusions}

In this paper we have introduced a new technique for representing
stratified institutions by a decomposition at the level of the
models.
Then we applied this decomposition technique for developing general
results on the existence of model amalgamation and of diagrams in
stratified institutions.
In the latter case it has emerged that some nominals infrastructure is
needed.
This is hardly surprising because fundamentally diagrams reflect a
fine balance between syntax and semantics (i.e. model homomorphisms
represented as models of theories) and the presence of nominals
restore such a balance for Kripke semnantics.

\subsection*{Future work}
The potential of our decomposition technique should be further
explored along the following directions:
\begin{enumerate}

\item Quasi-varieties and initial semantics in stratified
  institutions.
  These have been studied for the particular half-abstract case of
  hybridised institutions in \cite{QVHybrid}.
  However by the decomposition technique we should be able to do those
  at the higher level of generality of abstract stratified
  institutions, one of the consequences being a wider class of
  concrete applications.

\item Develop some general results supporting the existence of
  important structures in stratified institutional model theory that
  in the current literature have an ``assumed'' status.
  An example is that of filtered products of models
  \cite{KripkeStrat}. 

\item Generate new interesting examples of stratified institutions
  that break from the modal logics tradition.
  In this respect pragmatic motivations may come from computing
  science which has many areas whose foundations involve some form  of
  models with states.
  In those situations there is usually an almost automatic reliance on
  modal logics in their more or less conventional acceptations,
  although those have not been developed for those specific purposes,
  but rather for pure logic interests.
  Stratified institutions and their decomposition technique has the
  potential to offer a powerful theoretical tool for going beyond
  modal logics by defining model theoretic frameworks that are finer
  tuned to respective concrete applications. 
  
\end{enumerate}

\subsection*{Acknowledgement}
This work was supported by a grant of the Romanian Ministry of
Education and Research, CNCS -- UEFISCDI, project number
PN-III-P4-ID-PCE-2020-0446, within PNCDI III.

\bibliographystyle{plain}
\bibliography{/Users/diacon/TEX/tex}%

\begin{thebibliography}{10}

\bibitem{aiguier-bloch2019}
Marc Aiguier and Isabelle Bloch.
\newblock Logical dual concepts based on mathematical morphology in stratified
  institutions: applications to spatial reasoning.
\newblock {\em Journal of Applied Non-Classical Logics}, 29(4):392--429, 2019.

\bibitem{strat}
Marc Aiguier and R\u{a}zvan Diaconescu.
\newblock Stratified institutions and elementary homomorphisms.
\newblock {\em Information Processing Letters}, 103(1):5--13, 2007.

\bibitem{arrais-fiadeiro95}
M.~Arrais and Jos\'{e}~L. Fiadeiro.
\newblock Unifying theories in different institutions.
\newblock In Magne Haveraaen, Olaf Owe, and Ole-Johan Dahl, editors, {\em
  Recent Trends in Data Type Specification}, volume 1130 of {\em Lecture Notes
  in Computer Science}, pages 81--101. Springer, 1996.

\bibitem{blackburn2000}
Patrick Blackburn.
\newblock Representation, reasoning, and relational structures: a hybrid logic
  manifesto.
\newblock {\em Logic Journal of IGPL}, 8(3):339--365, 2000.

\bibitem{blackburn-rijke-venema2002}
Patrick Blackburn, Maarten de~Rijke, and Yde Venema.
\newblock {\em Modal Logic}.
\newblock Cambridge University Press, 2001.

\bibitem{edins}
R\u{a}zvan Diaconescu.
\newblock Elementary diagrams in institutions.
\newblock {\em Journal of Logic and Computation}, 14(5):651--674, 2004.

\bibitem{iimt}
R\u{a}zvan Diaconescu.
\newblock {\em Institution-independent Model Theory}.
\newblock Birkh\"{a}user, 2008.

\bibitem{QVHybrid}
R\u{a}zvan Diaconescu.
\newblock Quasi-varieties and initial semantics in hybridized institutions.
\newblock {\em Journal of Logic and Computation}, 26(3):855--891, 2016.

\bibitem{KripkeStrat}
R\u{a}zvan Diaconescu.
\newblock Implicit {K}ripke semantics and ultraproducts in stratified
  institutions.
\newblock {\em Journal of Logic and Computation}, 27(5):1577--1606, 2017.

\bibitem{IntroducingH}
R\u{a}zvan Diaconescu.
\newblock Introducing {H}, an institution-based formal specification and
  verification language.
\newblock {\em Logica Universalis}, 14(2):259--277, 2020.

\bibitem{AlgStrucSpec}
R\u{a}zvan Diaconescu and Ionu\c{t} \c{T}u\c{t}u.
\newblock On the algebra of structured specifications.
\newblock {\em Theoretical Computer Science}, 412(28):3145--3174, 2011.

\bibitem{modalg}
R\u{a}zvan Diaconescu, Joseph Goguen, and Petros Stefaneas.
\newblock Logical support for modularisation.
\newblock In Gerard Huet and Gordon Plotkin, editors, {\em Logical
  Environments}, pages 83--130. Cambridge, 1993.
\newblock Proceedings of a Workshop held in Edinburgh, Scotland, May 1991.

\bibitem{EncHybrid}
R\u{a}zvan Diaconescu and Alexandre Madeira.
\newblock Encoding hybridized institutions into first order logic.
\newblock {\em Mathematical Structures in Computer Science}, 26:745--788, 2016.

\bibitem{modalins}
R\u{a}zvan Diaconescu and Petros Stefaneas.
\newblock Modality in open institutions with concrete syntax.
\newblock {\em Bulletin of the Greek Mathematical Societ}, 49:91--101, 2004.
\newblock Previously published as JAIST Tech Report IS-RR-97-0046, 1997.

\bibitem{ks}
R\u{a}zvan Diaconescu and Petros Stefaneas.
\newblock Ultraproducts and possible worlds semantics in institutions.
\newblock {\em Theoretical Computer Science}, 379(1):210--230, 2007.

\bibitem{fitting-mendelsohn98}
Melvin Fitting and Richard~L. Mendelsohn.
\newblock {\em First-order Modal Logic}.
\newblock Kluwer/Springer, 1998.

\bibitem{gabbay-mml2003}
Dov~M. Gabbay, A.~Kurucz, F.~Wolter, and M.~Zakharyaschev.
\newblock {\em Many-dimensional modal logics: theory and applications.}
\newblock Elsevier, 2003.

\bibitem{ins}
Joseph Goguen and Rod Burstall.
\newblock Institutions: Abstract model theory for specification and
  programming.
\newblock {\em Journal of the Association for Computing Machinery},
  39(1):95--146, 1992.

\bibitem{goguen-rosu2000}
Joseph Goguen and Grigore Ro\c{s}u.
\newblock Institution morphisms.
\newblock {\em Formal Aspects of Computing}, 13:274--307, 2002.

\bibitem{goranko96}
Valentin Goranko.
\newblock Hierarchies of modal and temporal logics with reference pointers.
\newblock {\em Journal of Logic, Language and Information}, 5(1):1--24, 1996.

\bibitem{gaina2015}
Daniel G\u{a}in\u{a}.
\newblock Foundations of logic programming in hybridised logics.
\newblock In Codescu M., Diaconescu R., and \c{T}u\c{t}u I., editors, {\em
  Recent Trends in Algebraic Development Techniques. WADT 2015}, volume 9463,
  pages 69--89. Springer, Cham, 2015.

\bibitem{gaina-acm}
Daniel G\u{a}in\u{a}.
\newblock Forcing and calculi for hybrid logics.
\newblock {\em Journal of {ACM}}, 67(4):25:1--25:55, 2020.

\bibitem{maclane98}
Saunders {Mac Lane}.
\newblock {\em Categories for the Working Mathematician}.
\newblock Springer, second edition, 1998.

\bibitem{madeira-phd}
Alexandre Madeira.
\newblock {\em Foundations and techniques for software reconfigurability}.
\newblock PhD thesis, Universidades do Minho, Aveiro and Porto (Joint MAP-i
  Doctoral Programme), 2014.

\bibitem{HybridIns}
Manuel-Antonio Martins, Alexandre Madeira, R\u{a}zvan Diaconescu, and Luis
  Barbosa.
\newblock Hybridization of institutions.
\newblock In Andrea Corradini, Bartek Klin, and Corina C\^{\i}rstea, editors,
  {\em Algebra and Coalgebra in Computer Science}, volume 6859 of {\em Lecture
  Notes in Computer Science}, pages 283--297. Springer, 2011.

\bibitem{jm-granada89}
Jos\'e Meseguer.
\newblock General logics.
\newblock In H.-D. Ebbinghaus et~al., editors, {\em Proceedings, Logic
  Colloquium, 1987}, pages 275--329. North-Holland, 1989.

\bibitem{mossakowski96}
Till Mossakowski.
\newblock Different types of arrow between logical frameworks.
\newblock In F.~Meyer auf~der Heide and B.~Monien, editors, {\em Proc. ICALP
  96}, volume 1099 of {\em Lecture Notes in Computer Science}, pages 158--169.
  Springer Verlag, 1996.

\bibitem{prior}
Arthur~N. Prior.
\newblock {\em Past, Present and Future}.
\newblock Oxford University Press, 1967.

\bibitem{sannella-tarlecki88}
Donald Sannella and Andrzej Tarlecki.
\newblock Specifications in an arbitrary institution.
\newblock {\em Information and Control}, 76:165--210, 1988.

\bibitem{sannella-tarlecki-book}
Donald Sannella and Andrzej Tarlecki.
\newblock {\em Foundations of Algebraic Specifications and Formal Software
  Development}.
\newblock Springer, 2012.

\bibitem{tarlecki86}
Andrzej Tarlecki.
\newblock On the existence of free models in abstract algebraic institutions.
\newblock {\em Theoretical Computer Science}, 37:269--304, 1986.

\bibitem{tarlecki86s}
Andrzej Tarlecki.
\newblock Quasi-varieties in abstract algebraic institutions.
\newblock {\em Journal of Computer and System Sciences}, 33(3):333--360, 1986.

\bibitem{tarlecki95}
Andrzej Tarlecki.
\newblock Moving between logical systems.
\newblock In Magne Haveraaen, Olaf Owe, and Ole-Johan Dahl, editors, {\em
  Recent Trends in Data Type Specification}, volume 1130 of {\em Lecture Notes
  in Computer Science}, pages 478--502. Springer, 1996.

\bibitem{tarlecki98}
Andrzej Tarlecki.
\newblock Towards heterogeneous specifications.
\newblock In D.~Gabbay and M.~van Rijke, editors, {\em Proceedings,
  International Conference on Frontiers of Combining Systems (FroCoS'98)},
  pages 337--360. Research Studies Press, 2000.

\end{thebibliography}

\end{document}